\newcommand{\aut}{\textnormal{Aut}}
\newcommand{\supp}{\textnormal{supp}}
\newcommand{\sym}{\textnormal{Sym}(\omega)}
\newcommand{\dinfinity}{D_\infty}
\newcommand{\cfpo}[1] {\mathrm{CFPO}_{#1}}
\newcommand{\alt}[1] {\mathrm{Alt}_{#1}}
\newcommand{\treecauser}{dendromorphic group }
\newcommand{\act}{\textnormal{Act}}
\providecommand*{\cupdot}{%
  \mathbin{%
    \mathpalette\@cupdot{}%
  }%
}
\newcommand*{\@cupdot}[2]{%
  \ooalign{%
    $\m@th#1\cup$\cr
    \sbox0{$#1\cup$}%
    \dimen@=\ht0 %
    \sbox0{$\m@th#1\cdot$}%
    \advance\dimen@ by -\ht0 %
    \dimen@=.5\dimen@
    \hidewidth\raise\dimen@\box0\hidewidth
  }%
}
\providecommand*{\bigcupdot}{%
  \mathop{%
    \vphantom{\bigcup}%
    \mathpalette\@bigcupdot{}%
  }%
}
\newcommand*{\@bigcupdot}[2]{%
  \ooalign{%
    $\m@th#1\bigcup$\cr
    \sbox0{$#1\bigcup$}%
    \dimen@=\ht0 %
    \advance\dimen@ by -\dp0 %
    \sbox0{\scalebox{2}{$\m@th#1\cdot$}}%
    \advance\dimen@ by -\ht0 %
    \dimen@=.5\dimen@
    \hidewidth\raise\dimen@\box0\hidewidth
  }%
}
\newtheorem{lemma}{Lemma}[section]
\newtheorem{theorem}[lemma]{Theorem}
\newtheorem{cor}[lemma]{Corollary}
\newtheorem{dfn}[lemma]{Definition}
\newtheorem{prop}[lemma]{Proposition}
\newtheorem{remark}[lemma]{Remark}
\title{The Reconstruction of Cycle-free Partial Orders from their Abstract Automorphism Groups I : Treelike CFPOs}
\author{Robert Barham \\ Institut f\"ur Algebra, TU Dresden \\ robert.barham@yahoo.co.uk}
\begin{document}

\maketitle

\thanks{The author has received funding from the European Research Council under the European Community's Seventh Framework Programme (FP7/2007-2013 Grant Agreement no. 257039).}

\abstract{In this triple of papers, we examine when two cycle-free partial orders can share an abstract automorphism group.  This question was posed by M. Rubin in his memoir concerning the reconstruction of trees.

In this first paper, we give a variety of conditions that guarantee when a CFPO shares an automorphism group with a tree.  Some of these conditions are conditions on the abstract automorphism group, while some are one the CFPO itself.  Some of the lemmas used have corollaries concerning the model theoretic properties of a CFPO.}

\section{Introduction}\label{intro1}

The question of how much of a given structure is encoded in its symmetries is one that surfaces in many different ways in many different areas of pure mathematics.  One way in which this question surfaces is reconstruction from automorphism groups of first order structures.  An account of the history of this can be found in \cite{Pech2Monoid}, by C. Pech and M. Pech.  There are many levels of structure can be placed on a automorphism group, so here is some notation that clarifies what exactly we mean by ``isomorphism''.

\begin{dfn}
Let $M$ be an structure and let $\mathcal{L}_G$ be the language of groups, $\tau_M$ the topology of pointwise convergence on $\aut(M)$ and $\mathrm{Op}(f,x)$ the group action of $\aut(M)$ on $M$.
$$\begin{array}{rcl}
\aut(M) \cong_A \aut(N) & \Leftrightarrow & \langle \aut(M), \mathcal{L}_G \rangle \cong \langle \aut(N), \mathcal{L}_G \rangle \\
\aut(M) \cong_T \aut(N) & \Leftrightarrow & \langle \aut(M), \mathcal{L}_G, \tau_M \rangle \cong \langle \aut(N), \mathcal{L}_G, \tau_N \rangle \\
\aut(M) \cong_P \aut(N) & \Leftrightarrow & \langle \aut(M), M , \mathcal{L}_G, \mathrm{Op} \rangle \cong \langle \aut(N), N , \mathcal{L}_G, \mathrm{Op} \rangle \\
\end{array}
$$
The subscript $A$ stands for `abstract', $T$ for `topology' and $P$ for `permutation'.
\end{dfn}

One way in which we can pursue the reconstruction of first order models is the search for `faithful' classes.

\begin{dfn}
A class of first order models $K$ is said to be faithful if for all $M,N \in K$
$$ \aut(M) \cong_A \aut(N) \Leftarrow M \cong N$$
\end{dfn}

In this trio of papers, the structures we consider are `cycle-free partial orders' (CFPOs), and our notion of symmetry is the associated automorphism groups in the language of group theory.  CFPOs are a generalisation of trees, or semi-linear orders.  They were introduced in the memoir entitled `The Reconstruction of Trees from Their Automorphism Groups' \cite{Rubin91}, by M. Rubin, as a family of structures where the methods he used for reconstructing the trees would extend.  Their definition can be found in the `Preliminary Definitions' section.

Notable works concering CFPOs include an extensive study of the transitivity properties of the CFPOs can be found in \cite{Warren1997}, a memoir of Richard Warren, who defines them using the notion of path.  Warren's study was extended by two papers in 1998, \cite{Truss1998CFPO} by Truss and \cite{CreedTrussWarren} by Creed, Truss and Warren, both of which add to cases not fully dealt with by Warren in \cite{Warren1997}.  Gray and Truss in \cite{TrussGray09} examine the relationship between ends of a graph and CFPOs, and extract a number of results from this relationship.  This viewpoint is rather illuminating, even if one is not familiar with ends of graphs.

These papers do not extend Rubin's method for reconstruction to the CFPOs, instead applying different methods from a variety of sources.  The first part shows when we can appeal to his results directly.  The second adapts methods used by Shelah in \cite{ShelahPermutation, ShelahPermutationErrata} and by Shelah and Truss in \cite{ShelahTrussQuotients}, while the third uses properties of the wreath products of groups to get a class where the first and second approaches may be used together.

Part I is the draws most strongly from \cite{Rubin91}, the memoir of Rubin.  In that memoir, Rubin gives an extremely complete reconstruction result for trees.  This part seeks to show when we can appeal to those results directly.  Section \ref{intro1} contains the preliminary notions required, as well as the definition of CFPOs.  Section \ref{prelim1} will give the definition of connecting set and path, which will be used extensively throughout this whole work.  Their most immediate use will be to define the class of CFPOs.

Section \ref{orderconditions} describes three constructions that build from a CFPO with some constraints a tree with the same permutation automorphism group.  These constrains are: possessing a fixed point; not embedding $\alt{\omega}$; and not embedding $\alt{}$.  These constructions allow us to deduce that the theory of a CFPO is dp-minimal, which will be done in Section \ref{CFPOsMT}.

Section \ref{gpconditions} shows that if $\dinfinity$, the infinite dihedral group, is a subgroup of an automorphism group of a tree, then it is contained in a supergroup isomorphic to one of a family of groups called `dendromorphic'.  We use this to formulate a condition the abstract automorphism group of a CFPO that shows when it is also the automorphism group of a tree.

These papers are based on the author's Ph.D. thesis, supervised by Prof. John Truss at The University of Leeds.

\section{Preliminaries}\label{prelim1}

This section contains the definition of CFPOs and the supporting concepts.

\begin{dfn}[2.3.2 of \cite{Warren1997}]\label{dfn:connectingset}
If $M$ is a partial order and $a,b \in M$, then $C$, the n-tuple $\langle c_1,c_2, \ldots ,c_n \rangle$ (for $n \geq 2$) is said to be a \textbf{connecting set} from $a$ to $b$ in $M$, written $C \in C^M \langle a, b \rangle$, if the following hold:
\begin{enumerate}
\item $c_1=a, c_n=b, c_2, \ldots ,c_{n-1} \in M^D$
\item if $1 \leq i \leq n-1$, then $c_i \not\parallel c_{i+1}$
\item if $1<i<n$, then $c_{i-1} < c_i > c_{i+1}$ or $c_{i-1} > c_i < c_{i+1}$
\end{enumerate}
\end{dfn}

\begin{dfn}[2.3.3 of \cite{Warren1997}]
Let $M$ be a partial order, $a,b \in M$, and let $C = \langle c_1,c_2,\ldots ,c_n \rangle$ be a connecting set from $a$ to $b$ in $M$.  Let $\sigma_k$ (for $ 1< k < n$) be maximal chains in $M^D$ with endpoints $c_k,c_{k+1} \in \sigma_k$, such that if $x \in \sigma_i \cap \sigma_j$ for some $i < j$, then $j = i+1$ and $x = c_{i+1}$.  Then we say that $P = \bigcup_{0<k<n} \sigma_k$ is a \textbf{path} from $a$ to $b$ in $M$.
\end{dfn}

\begin{dfn}
A partial order $M$ is said to be a \textbf{cycle-free partial order} (CFPO) if for all $x,y \in M$ there is at most one path between $x$ and $y$ in $M^D$. 
\end{dfn}

\begin{dfn}
Let $M$ be a CFPO, and let $x,y \in M$ and $A,B \subseteq M$.  The unique path between $x,y$ is denoted by $\path{x,y}$.  We also define:
$$
\begin{array}{r c l}
\path{x,B} & := & \bigcap_{b \in B} \path{x,b} \\
\path{A,y} & := & \bigcap_{a \in A} \path{a,y} \\
\path{A,B} & := & \bigcap_{a \in A} \path{a,B}
\end{array}
$$
\end{dfn}

\begin{dfn}
A partial order is said to be \textbf{connected} if there is a path between any two points, i.e. $\path{x,y}$ exists for all $x,y \in M$, and is said to be disconnected otherwise.

Let $M$ be a partial order and let $C \subseteq M$.  We say that $C$ is a \textbf{connected component} of $M$ if it is a maximal connected subset of $M$, i.e. for all $x,y \in M$ if $x \in C$ and $\path{x,y}$ exists then $y \in C$.
\end{dfn}

Truss in \cite{Truss01} shows that the class of cycle-free partial orders is axiomatisable, but not finitely axiomatisable.  Adding colour predicates to the language of partial orders is an inalienable part of Rubin's memoir, as well as this work, so throughout these three papers every partial order discussed may be coloured by infinitely many colour predicates.

\begin{dfn}\label{dfn:treelike}
CFPO $M$ is said to be \textbf{treelike} if there is a coloured tree $T$ such that
$$\aut(M) \cong_A \aut(T)$$

If $G \leq \aut(M)$ then the action of $G$ is said to be treelike if there is a tree $T$ such that
$$G \cong_A \aut(T)$$
\end{dfn}

\begin{prop}\label{Lemma:NeededInChapter3}
Let $M$ be a CFPO and let $(a_1, \ldots, a_n),(b_1, \ldots, b_n) \in M$.  Furthermore, we define
$$\mathrm{Adj}:= \lbrace (i,j) \: : \: a_i \leq \geq a_j \textnormal{ and } \forall k \: a_k \not\in ( a_i, a_k) \rbrace$$
Then $(a_1, \ldots a_n)$ and $(b_1, \ldots, b_n)$ lie in the same orbit if and only if there is an isomorphism of finite structures
$$\phi: (a_1, \ldots a_n) \rightarrow (b_1, \ldots, b_n)$$
such that for all $(i,j) \in \mathrm{Adj}$, the pair $(a_i,a_j)$ lies in the same 2-orbit as $(b_{\phi(i)},b_{\phi(j)})$.
\end{prop}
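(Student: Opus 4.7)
The forward direction is immediate: if $g \in \aut(M)$ realises $g(a_i) = b_{\phi(i)}$ for some bijection $\phi$, then $\phi$ is an isomorphism of the induced finite substructures and $g$ itself is a witness that $(a_i, a_j)$ and $(b_{\phi(i)}, b_{\phi(j)})$ lie in the same 2-orbit for every pair, adjacent or not. No real work is required here.

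For the converse, my plan is to argue by induction on $n$, exploiting the cycle-free geometry at each step. After relabelling one may assume $\phi$ is the identity. The base case $n = 1$ reduces to: elements of $M$ with the same induced (coloured, singleton) finite structure lie in the same $\aut(M)$-orbit. For $n \geq 2$, observe first that cycle-freeness forces the binary relation $\mathrm{Adj}$ on $\{a_1,\ldots,a_n\}$ to form a forest, since any cycle in $\mathrm{Adj}$ would produce a cycle among the paths in $M$ joining the $a_i$. Restrict attention to one connected component and pick a leaf $a_k$, namely an index with a unique $\mathrm{Adj}$-neighbour $a_j$.

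The reduced tuples $(a_1, \ldots, \widehat{a_k}, \ldots, a_n)$ and $(b_1, \ldots, \widehat{b_k}, \ldots, b_n)$ still satisfy the hypothesis of the proposition: removing a leaf $a_k$ does not introduce new $\mathrm{Adj}$-pairs, because by leafness no path between two of the remaining $a_i$ passed through $a_k$, so existing non-adjacent pairs of the smaller tuple are still non-adjacent and the order-isomorphism and all relevant 2-orbit conditions survive. By induction, there is $g' \in \aut(M)$ with $g'(a_i) = b_i$ for every $i \neq k$.

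It remains to adjust $g'$ so that it also sends $a_k$ to $b_k$. Writing $a_k' := g'(a_k)$, the 2-orbit hypothesis on the adjacent pair $(a_j, a_k)$, transported by $g'$, produces an automorphism $h$ with $h(b_j) = b_j$ and $h(a_k') = b_k$; but a priori $h$ may move the other $b_i$. The key geometric observation is that, by leafness, the path from any $b_i$ with $i \neq j,k$ to either $b_k$ or $a_k'$ must pass through $b_j$, so $b_k$ and $a_k'$ both lie in branches of $M$ at $b_j$ that contain none of the other $b_i$. Using cycle-freeness, one can replace $h$ by an automorphism $\tilde h$ which agrees with $h$ on these two branches at $b_j$ and acts as the identity on every other branch at $b_j$; then $\tilde h \circ g'$ is the desired automorphism. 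The main technical obstacle of the proof is precisely this last step: constructing $\tilde h$ from $h$ requires verifying that a piecewise definition across the branches of a single point in a CFPO yields a global automorphism. This rests on the fact that paths between distinct branches at $b_j$ must pass through $b_j$ itself, so any bijection that is the identity outside a single branch, an automorphism on that branch fixing $b_j$, and preserves each branch setwise, will preserve the whole order and the colour predicates globally.
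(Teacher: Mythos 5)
The paper offers no direct argument for this proposition --- it is derived in one line from Proposition 4.5 of Simon's paper on dp-minimal ordered structures, where tuples in trees are analysed via their meet-closures --- so your induction is a genuinely different route. Unfortunately it contains a genuine gap: the claim that $\mathrm{Adj}$ is a forest is false, and with it the existence of the leaf on which the entire induction turns. Consider the five-point CFPO with $a_1, a_3 < c < a_2, a_4$, $a_1 \parallel a_3$ and $a_2 \parallel a_4$ (an ``X'' through $c$), and the tuple $(a_1,a_2,a_3,a_4)$ omitting $c$. Each of $(a_1,a_2),(a_1,a_4),(a_3,a_2),(a_3,a_4)$ is a comparable pair with no tuple element strictly between, so $\mathrm{Adj}$ is a $4$-cycle in which every vertex has two neighbours and there is no leaf to strip. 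Your justification --- that a cycle in $\mathrm{Adj}$ would force a cycle of paths in $M$ --- fails because all four paths share the single ramification point $c$: a cycle in the comparability graph does not translate into a cycle in the path structure. The same conflation of $\mathrm{Adj}$-adjacency with path geometry undermines the ``key geometric observation'' in the patching step: take $a_j < w$ with $w < a_i$, $w < a_k$, $a_i \parallel a_k$ and $w$ not in the tuple. Then $a_k$ is an $\mathrm{Adj}$-leaf whose unique neighbour is $a_j$, yet $\path{a_i,a_k}$ turns at $w$ and never meets $a_j$, so $b_k$ and $a_k' = g'(a_k)$ need not lie in branches at $b_j$ avoiding the other $b_i$, and modifying $h$ on the branch of $b_j$ containing $b_k$ will in general move $b_i$ as well. (As written, the patch also fails to be a bijection when $h$ carries the branch containing $a_k'$ onto the branch containing $b_k$ while $\tilde h$ is required to fix the latter pointwise.)

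Two further points. The base case as you state it --- elements with the same coloured singleton structure lie in the same orbit --- is false in a general CFPO; for $n=1$ you should instead note that $(1,1) \in \mathrm{Adj}$, so the hypothesis itself hands you an automorphism carrying $a_1$ to $b_1$. More importantly, any repair of your induction must replace $\mathrm{Adj}$ by an adjacency notion built from the ramification points of the paths $\path{a_i,a_j}$, which typically live in $M^D$ rather than in the tuple; closing the tuple under these points and then stripping genuine leaves of the resulting path-tree is essentially the meet-closure device in Simon's Proposition 4.5 on which the paper relies.
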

\begin{proof}
This is a quick consequence of Proposition 4.5 of \cite{Simon2011}.
\end{proof}

\begin{dfn}
$\alt{}$ is the partial order with the domain $ \lbrace a_i \; : \; i \in \mathbb{Z} \rbrace $ ordered by
\begin{itemize}
\item if $i$ is odd then $ a_{i-1} > a_{i} < a_{i+1} $ 
\item if $i$ is even then $ a_{i-1} < a_{i} > a_{i+1} $
\end{itemize}
$\alt{n}$ is defined to be $\alt{}$ restricted to $\lbrace a_0, \ldots a_{n-1} \rbrace$.  Note that flipping the order does not affect the definition of $\alt{}$, but does affect $\alt{n}$.  We will write $\alt{n}^*$ for the reverse ordering of $\alt{n}$.

$\alt{\omega}$ is defined to be $\alt{}$ restricted to $\lbrace a_i \: : \: i \in \omega \rbrace$.  Again, the reverse ordering is denoted by $\alt{\omega}^*$
\end{dfn}

\begin{figure}[!ht]
\begin{center}
\begin{tikzpicture}[scale=0.14]
\draw (-25,0) -- (-20,5) -- (-10,-5) -- (0,5) -- (10,-5) -- (20,5) -- (25,0);

\fill (-20,5) circle (0.5);
\fill (-10,-5) circle (0.5);
\fill (0,5) circle (0.5);
\fill (20,5) circle (0.5);
\fill (10,-5) circle (0.5);

\draw[anchor=west] (-20,5) node {$a_{-2}$};
\draw[anchor=west] (-10,-5) node {$a_{-1}$};
\draw[anchor=west] (0,5) node {$a_{0}$};
\draw[anchor=west] (10,-5) node {$a_{1}$};
\draw[anchor=west] (20,5) node {$a_{2}$};

\draw (-27,0) node {$\ldots$};
\draw (27,0) node {$\ldots$};

\end{tikzpicture}
\end{center}
\caption{The Alternating Chain}
\end{figure}
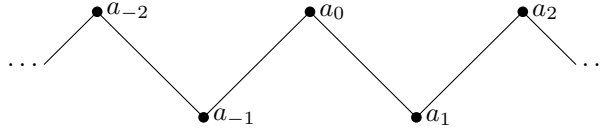

\begin{dfn}
A CFPO is said to be a $\cfpo{n}$ if $\alt{n}$ embeds but $\alt{n+1}$ does not.  A CFPO is said to be a $\cfpo{\omega}$ if $\alt{\omega}$ embeds but $\alt{}$ does not.  A CFPO is said to be a $\cfpo{\infty}$ if $\alt{}$ embeds.
\end{dfn}

\begin{dfn}\label{dfn:G(t)}
If $f \in \aut(M)$ then the \textbf{support} of $f$ is the following set:
$$\supp(f) := \lbrace x \in M \: : \: f(x) \not= x \rbrace$$
If $F \subseteq \aut(M)$ and $x \in M$ then
$$F(x) := \lbrace f(x) \: : \: f \in F \rbrace$$
and the support of $F$ is the following set:
$$\supp(F) := \bigcup_{f \in F} \supp(f)`$$
\end{dfn}

\section{Order Conditions}\label{orderconditions}

We start with CFPOs which have points which are fixed by every automorphism (which we call \textbf{fixed points}).  We will take from the midst of $M$ our fixed point and plant it in the ground, before straightening out the paths of $M$ into branches.

The colouring of $M$ is largely irrelevant for this work, and so takes a very back-seat role.  Indeed, for the rest of this subsection the term `monochromatic' will mean `monochromatic with respect to $U$', where $U$ is the predicate introduced in the next definition.

\begin{dfn}\label{Def:XandY}
Let $\langle M, \leq_M \rangle$ be a connected CFPO whose automorphism group fixes the point $r$.  We will construct $T(M)$ by specifying a new order on $|M |$.  Let $r$ be the fixed point of $M$, which will become the root of $T(M)$.  The colour of $r \in M$ is the same in $T(M)$.

We denote the order on $T$ by $\leq_T$ and define it as follows:
\begin{itemize}
\item $r \leq_{T(M)} s$ for all $s \in M$
\item $s \leq_{T(M)} t$ if and only if $s \in \path{r,t}$
\end{itemize}
We also add a new unary predicate, which we call $U$.  We define the following sets:
$$
\begin{array}{l c l}
X_0 & := & \lbrace t \in M \; : \; r \leq_M t \rbrace \\
Y_0 & := & \lbrace t \in M \; : \; t <_M r \rbrace \\
 & \vdots & \\
X_n & := & \lbrace t \in M \; : \; y \leq_M t \: \textnormal{for some} \: y \in Y_{n-1} \rbrace \setminus \bigcup_{i < n}(X_i \cup Y_i) \\
Y_n & := & \lbrace t \in M \; : \; t <_M x \: \textnormal{for some} \: x \in X_{n-1} \rbrace \setminus \bigcup_{i < n}(X_i \cup Y_i) \\
 & \vdots & \\
\end{array}
$$
We also define $X := \bigcup X_i$ and say that $U(t)$ holds whenever $t \in X$.  Finally

$$\mathcal{X} := \lbrace X_i , Y_i \: : \: i \in \omega \rbrace$$
\end{dfn}

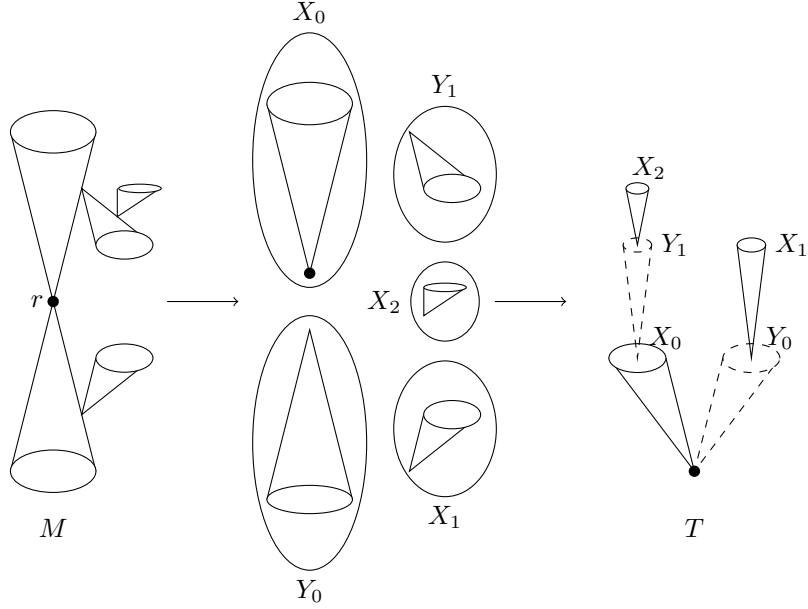
\begin{figure}
\begin{center}
\begin{tikzpicture}[scale=0.075]
\draw (0,30) -- (15,-30);
\draw (0,-30) -- (15,30);
\fill[white] (7.5,30) ellipse  (7.5 and 3.75);
\fill[white] (7.5,-30) ellipse  (7.5 and  3.75);
\draw (7.5,30) ellipse  (7.5 and 3.75);
\draw (7.5,-30) ellipse  (7.5 and 3.75);

\draw (15 ,10) -- (12.5 ,20) -- (25 ,10);
\fill[white] (20,10) ellipse (5 and 2.5);
\draw (20,10) ellipse (5 and 2.5);

\draw (15 ,-10) -- (12.5 ,-20) -- (25 ,-10);
\fill[white] (20,-10) ellipse (5 and 2.5);
\draw (20,-10) ellipse (5 and 2.5);

\draw (18.75 ,20) -- (18.75 ,15) -- (26.25 ,20);
\fill[white] (22.725,20) ellipse (3.725 and 0.75);
\draw (22.725,20) ellipse (3.725 and 0.75);

\fill (7.5,0) circle (1);

\draw[anchor=east] (7.5,0) node {$r$};

\draw (7.5,-40) node {$M$};

\draw[->]  (27.5,0) -- (40,0);

\draw (45,35) -- (52.5,5) -- (60,35);
\fill[white] (52.5,35) ellipse (7.5 and 3.75);
\draw (52.5,35) ellipse (7.5 and 3.75);
\draw (52.5,25) ellipse (10 and 22.5);
\fill (52.5,5) circle (1);

\draw (45,-35) -- (52.5,-5) -- (60,-35);
\fill[white] (52.5,-35) ellipse (7.5 and 2.5);
\draw (52.5,-35) ellipse (7.5 and 2.5);
\draw (52.5,-25) ellipse (10 and 22.5);

\draw (72.5,20) -- (70,30) -- (82.5,20);
\fill[white] (77.5,20) ellipse (5 and 2.5);
\draw (77.5,20) ellipse (5 and 2.5);
\draw (76.25,22.5) ellipse (9 and 12);

\draw (72.5,-20) -- (70,-30) -- (82.5,-20);
\fill[white] (77.5,-20) ellipse (5 and 2.5);
\draw (77.5,-20) ellipse (5 and 2.5);
\draw (76.25,-22.5) ellipse (9 and 12);

\draw (72.525 ,2.5) -- (72.525 ,-2.5) -- (79.975 ,2.5);
\fill[white] (76.25,2.5) ellipse (3.725 and 0.75);
\draw (76.25,2.5) ellipse (3.725 and 0.75);
\draw (76.25,0) ellipse (6 and 7);

\draw[anchor=south]  (52.5,47.5) node {$X_0$};
\draw[anchor=north]  (52.5,-47.5) node {$Y_0$};
\draw[anchor=south] (76.25,34.5) node {$Y_1$};
\draw[anchor=north]  (76.25,-34.5) node {$X_1$};
\draw[anchor=east]  (70.25,0)  node {$X_2$};

\draw[->]  (85,0) -- (97.5,0);

\draw (105,-10) -- (120,-30) -- (115,-10);
\fill[white] (110,-10) ellipse (5 and 2.55);
\draw (110,-10) ellipse (5 and 2.55);

\draw[dashed] (125,-10) -- (120,-30) -- (135,-10);
\fill[white] (130,-10) ellipse (5 and 2.55);
\draw[dashed] (130,-10) ellipse (5 and 2.55);

\draw[dashed] (107.5,10) -- (110,-10) -- (112.5,10);
\fill[white] (110,10) ellipse (2.5 and 1.25);
\draw[dashed] (110,10) ellipse (2.5 and 1.25);

\draw (127.5,10) -- (130,-10) -- (132.5,10);
\fill[white] (130,10) ellipse (2.5 and 1.25);
\draw (130,10) ellipse (2.5 and 1.25);

\draw (108,20) -- (110,10) -- (112,20);
\fill[white] (110,20) ellipse (2 and 1);
\draw (110,20) ellipse (2 and 1);

\fill (120,-30) circle (1);

\draw[anchor=south]  (115,-10) node {$X_0$};
\draw[anchor=south]  (135,-10) node {$Y_0$};
\draw[anchor=west] (112.5,10) node {$Y_1$};
\draw[anchor=west]  (132.5,10) node {$X_1$};
\draw[anchor=south]  (112,20)  node {$X_2$};

\draw (120,-40) node {$T$};
\end{tikzpicture}
\end{center}
\caption{Turning $M$ with fixed point $r$ into $T(M)$}
\end{figure}

\begin{lemma}
$\mathcal{X}$ partitions $|M|$.
\end{lemma}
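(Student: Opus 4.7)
The plan is to split the statement into the two standard ingredients of any partition claim: covering and pairwise disjointness. Pairwise disjointness is largely built into the definitions, so the main content is covering, which I would derive from the existence of a unique path $\path{r,t}$ supplied by connectedness of $M$ together with the CFPO axiom.

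For covering, fix $t \in M$ and let $\langle r = c_1, c_2, \ldots, c_k = t \rangle$ be the connecting set of $\path{r,t}$. I would argue by induction on $k$ that $t$ lies in some set of $\mathcal{X}$, simultaneously tracking whether the last step of the path goes up or down. The base case $k \leq 2$ puts $t$ directly into $X_0$ (if $r \leq_M t$) or $Y_0$ (if $t <_M r$). For the inductive step, the alternation clause in the connecting-set definition tells me that if the step into $c_{k-1}$ was downward (so that by the inductive hypothesis $c_{k-1} \in Y_{k-3}$) then $c_{k-1} <_M t$, placing $t$ in the $X_{k-2}$ witness condition; the symmetric case gives $c_{k-1} \in X_{k-3}$ and $t \in Y_{k-2}$. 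Any would-be membership of $t$ in an earlier $X_i$ or $Y_i$ would, by the same description, yield a shorter connecting set from $r$ to $t$ and contradict the uniqueness of $\path{r,t}$, so the set-subtraction in the definition is automatically satisfied.

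For disjointness, the explicit removal of $\bigcup_{i<n}(X_i \cup Y_i)$ in the inductive definition already makes $X_n$ and $Y_n$ miss every earlier set, so the only remaining obligation is $X_n \cap Y_n = \emptyset$ for each $n$. When $n = 0$ this is immediate from the impossibility of $r \leq_M t <_M r$. When $n \geq 1$, suppose $t \in X_n \cap Y_n$; then there are witnesses $y \in Y_{n-1}$ with $y \leq_M t$ and $x \in X_{n-1}$ with $t <_M x$, and both inequalities are strict because $t$ is excluded from $X_{n-1} \cup Y_{n-1}$. Appending the step $y <_M t$ to the path from $r$ to $y$, and the step $t <_M x$ to the path from $r$ to $x$, produces two paths from $r$ to $t$ whose last edges go in opposite directions and are therefore distinct, contradicting that $M$ is a CFPO.

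The main obstacle I expect is the simultaneous bookkeeping of the two characterisations of $X_n$ and $Y_n$, namely the recursive set-theoretic definition and the path-shape description in terms of connecting sets, since the CFPO contradiction in the disjointness step depends on the fact that elements of $X_{n-1}$ and $Y_{n-1}$ come equipped with a known final direction for their path to $r$, which is exactly what keeps the alternation condition valid when the path is extended to $t$.
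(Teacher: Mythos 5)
Your proposal is correct and follows essentially the same route as the paper: covering is proved by induction along the connecting set of the unique path from $r$ to $t$ supplied by connectedness, and disjointness between sets of different index comes for free from the explicit set-subtraction in Definition \ref{Def:XandY}. If anything you are more careful than the paper, which dispatches disjointness with ``by construction'' and never explicitly rules out $X_n \cap Y_n \neq \emptyset$; your cycle-freeness argument for that case is a worthwhile addition rather than a deviation.
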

\begin{proof}
By construction
$$
\begin{array}{r c l}
X_i \cap X_j \not= \emptyset & \Rightarrow & i = j \\
Y_k \cap Y_l \not= \emptyset & \Rightarrow & k = l 
\end{array}
$$
so it remains to show that $\mathcal{X}$ covers $|M|$.  We pick an arbitrary $z \in |M|$ and consider $\path{z,r}$, which exists as all CFPOs considered are connected.

Let $z_0 (=z), z_1, \ldots z_n (=r)$ be the endpoints of $\path{z,r}$.  We know that $z_n \in X_0$ as $z_n = r$, and hence $z_{n-1} \not\parallel z_n$ implies that $z_{n-1} \in \bigcup \mathcal{X}$.  Similarly $z_{n-2} \not\parallel z_{n-1}$ implies that $z_{n-1} \in \bigcup \mathcal{X}$ and so on along $\path{z,r}$ until we deduce that $z \in \bigcup \mathcal{X}$.
\end{proof}

If we start with a rooted tree, and use the root for our procedure, our construction returns the original structure with an additional predicate which is realised everywhere.  Our eventual goal is to say that the canonical representative of $M$ is the canonical representative of $T(M)$, and to do so we must show that $T(M)$ is a tree with the same automorphism group as $M$.

This construction has the unfortunate property that we may have to make a choice of fixed point, and the resulting structures depend on this choice.  However, since our claim is that $T(M)$ is a tree, rather than a canonical tree, we may sweep this difficulty under the carpet of Rubin's work.

\begin{prop}
$\langle T(M) , \leq_{T(M)}, U \rangle$ is a tree.
\end{prop}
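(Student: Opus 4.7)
My plan is to verify the three things required to call $\langle T(M), \leq_{T(M)}, U\rangle$ a tree: (i) that $\leq_{T(M)}$ is a partial order, (ii) that for every $t$ the set $\{s : s \leq_{T(M)} t\}$ is linearly ordered (the semilinearity/tree condition), and (iii) that there is a root, which is immediate since by definition $r \leq_{T(M)} s$ for every $s \in M$. The unary predicate $U$ is simply carried along, so no further work is required for it.

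The whole proof rests on a single \emph{Sub-Path Lemma}: if $s \in \path{r,t}$, then $\path{r,s} \subseteq \path{r,t}$. The idea is that the portion of $\path{r,t}$ lying between $r$ and $s$ is itself a path from $r$ to $s$ in $M^D$, so by the CFPO condition (uniqueness of paths) it must coincide with $\path{r,s}$. I would establish this once and then use it as a black box.

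With the Sub-Path Lemma in hand, the partial-order axioms are routine. Reflexivity holds because $t$ is an endpoint of $\path{r,t}$ (with the harmless convention $\path{r,r} = \{r\}$). Transitivity: if $s \leq_{T(M)} t \leq_{T(M)} u$ then $s \in \path{r,t} \subseteq \path{r,u}$. Antisymmetry: if $s \leq_{T(M)} t \leq_{T(M)} s$ then $\path{r,s} \subseteq \path{r,t} \subseteq \path{r,s}$, forcing the two paths (and hence their endpoints) to coincide.

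For semilinearity, let $s_1, s_2 \leq_{T(M)} t$, so both lie on $\path{r,t}$. Applying the Sub-Path Lemma at $s_1$ decomposes $\path{r,t}$ as $\path{r,s_1} \cup \path{s_1,t}$ (overlapping only at $s_1$, using path uniqueness to identify the two pieces). Then $s_2$ lies in one of these two pieces. If $s_2 \in \path{r,s_1}$ we have $s_2 \leq_{T(M)} s_1$; if $s_2 \in \path{s_1,t}$, then the analogous decomposition of $\path{r,t}$ at $s_2$ forces $s_1 \in \path{r,s_2}$, giving $s_1 \leq_{T(M)} s_2$. The main obstacle throughout is the Sub-Path Lemma itself: because paths are unions of maximal chains rather than mere finite zigzags, one must check that truncating $\path{r,t}$ at an interior point $s$ actually produces a path in the technical sense of Warren's Definition 2.3.3 before uniqueness can be invoked. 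Once that bookkeeping is done, everything above is straightforward.
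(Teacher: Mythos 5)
Your proposal is correct and follows essentially the same route as the paper: both arguments reduce semilinearity to the uniqueness of paths in a CFPO (two points on $\path{r,t}$ must be comparable because one lies on the path from the other to $r$) and both observe that $r$ is a global lower bound. You spell out the partial-order axioms and isolate the sub-path/truncation step explicitly, which the paper's much terser proof leaves implicit, but the underlying idea is the same.
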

\begin{proof}
$M$ is connected so $\leq_{T(M)}$ is defined everywhere.

If $s_0, s_1 \leq_{T(M)} t$ then $\lbrace s_0, s_1 \rbrace \subseteq \path{t,r}$, and since $M$ is cycle-free this means that either $s_0 \in \path{s_1,r}$ or $s_1 \in \path{s_0,r}$, showing that $s_0 \not \parallel s_1$, and thus all initial sections of $T(M)$ are linearly ordered.  Finally, $r \in \path{r,t}$ for all $t$, so every pair from $T$ has a common lower bound, showing that $\langle T(M), \leq_{T(M)}, U \rangle$ is a tree.
\end{proof}

Of course, this construction is without merit if it does not preserve the automorphism group.  We work towards that goal with the following lemmas.

\begin{lemma}\label{lemma:MinterpretableinT(M)}
$\langle M, \leq_M, r \rangle$ is interpretable in $\langle T(M), \leq_{T(M)}, U \rangle$.
\end{lemma}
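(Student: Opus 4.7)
My plan is to exploit the fact that $T(M)$ is built on the same underlying set as $M$, so no domain reinterpretation is needed: it suffices to define the constant $r$ and the relation $\leq_M$ by first-order formulas in $\langle T(M),\leq_{T(M)},U\rangle$. The constant $r$ is immediate as the $\leq_{T(M)}$-minimum, since by construction $r\leq_{T(M)}s$ for every $s$.

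For $\leq_M$, I propose the formula
\[
\phi(s,t)\;\equiv\;s=t\;\vee\;\bigl(s<_T t\wedge \forall u(s<_T u\leq_T t\to U(u))\bigr)\;\vee\;\bigl(t<_T s\wedge \forall u(t<_T u\leq_T s\to \neg U(u))\bigr),
\]
where I abbreviate $\leq_{T(M)}$ by $\leq_T$. The guiding intuition is that $U(t)$ records the \emph{direction of the final segment} of $\path{r,t}$ in $M$: $U(t)$ holds precisely when $t\in X$, which by the definitions of $X_n,Y_n$ is equivalent to $\path{r,t}$ approaching $t$ from below in $M$. When $s<_T t$, the point $s$ lies on $\path{r,t}$, and the continuation $\path{s,t}$ is a union of maximal chains in $M^D$ meeting at corners that alternate between local maxima (where $U$ holds) and local minima (where $U$ fails). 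Requiring $U$ on every $u\in(s,t]_T$ forces $\path{s,t}$ to contain no interior corner, hence to be a single upward chain through $s,t\in M$, i.e.\ $s<_M t$.

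To carry this out rigorously, I would (i) establish the equivalence between $U(t)$ and ``the final segment of $\path{r,t}$ goes up'' directly from the recursive definition of the $X_n,Y_n$; (ii) check the forward direction, namely that if $s<_M t$ then $\path{s,t}$ is a sub-chain of $\path{r,t}$ with no corner, so every intermediate tree-point inherits the $U$-value of $t$; and (iii) check the converse by contradiction: a local-minimum corner strictly between $s$ and $t$ would violate $U$ somewhere in $(s,t]_T$, whereas a local-maximum corner would force the approach to $t$ to come from above, contradicting $U(t)$. The symmetric third disjunct of $\phi$ handles the case $s>_M t$. Finally, if $s$ and $t$ are $\leq_T$-incomparable then $\path{s,t}$ passes through their tree-infimum and so contains a corner, giving $s\parallel_M t$ and matching the negation of $\phi$. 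The main technical point, and the one requiring care, is the corner analysis in step (iii): one has to keep straight that paths live in $M^D$ and that corners alternate in type, so ``no corner strictly between'' really does translate to ``$s$ and $t$ lie in a single monotonic chain'' and hence are $M$-comparable.
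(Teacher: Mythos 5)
You are right that no domain re-interpretation is needed and that $r$ is definable as the $\leq_{T(M)}$-minimum; your second and third disjuncts are essentially the paper's clauses (a) and (b). The gap is your closing claim that $\leq_{T(M)}$-incomparable points are $\leq_M$-incomparable: this is false, and consequently your formula $\phi$ does not define $\leq_M$. Concretely, let $M$ consist of $r <_M z <_M y$ together with a point $x$ with $x <_M z$ and $x \parallel r$ (colour $r$ so that it is fixed). This is a connected CFPO; here $X_0 = \lbrace r,z,y \rbrace$ and $Y_1 = \lbrace x \rbrace$, so $U$ holds everywhere except at $x$. In $T(M)$ we have $z \leq_{T(M)} x$ and $z \leq_{T(M)} y$, but $x$ and $y$ are $\leq_{T(M)}$-incomparable, since neither lies on the path from $r$ to the other; yet $x <_M z <_M y$ gives $x <_M y$. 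All three of your disjuncts fail on $(x,y)$. The error in the heuristic is that the tree-infimum $z$ of $x$ and $y$ is a corner of $\path{r,x}$ --- the path from the root turns around at $z$ to descend to $x$ --- but it is not a corner of $\path{x,y}$, which here is a single monotone chain. Incomparability in $T(M)$ detects a turn on the way from the root, not a turn between the two points themselves.

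This is precisely the configuration that the paper's clause (c) of $\phi_{\leq_M}$ is built for: pairs with $y \in X_i$ and $x \in Y_{i+1} \cup Y_{i-1}$, where one asserts the existence of a common $\leq_{T(M)}$-lower bound $z$ of $x$ and $y$ such that the tree-interval from $z$ to $y$ lies entirely in $U$, while the tree-interval from $z$ to $x$ splits into a $U$-initial segment followed by a $\neg U$-final segment (in $M$ one climbs from $z$ and then descends to $x$, landing below a point of $\path{z,y}$). To repair your proof you must add such a disjunct; no formula all of whose disjuncts entail $\leq_{T(M)}$-comparability of $s$ and $t$ can be correct. Your steps (i)--(iii), the corner analysis along a tree-comparable pair, are sound as far as they go --- indeed your use of the strict bound $s <_T u$ even covers the case where $s$ itself is the corner of $\path{r,t}$ --- but they only ever address the comparable configurations.
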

\begin{proof}
The following formulas form an interpretation of $\langle M, \leq_M, r \rangle$ in $\langle T(M), \leq_{T(M)}, U \rangle$:
\begin{enumerate}
\item $\phi_{Dom}(x)$, which defines the domain of the interpretation.  We take
$$x=x$$
\item $\phi_{Eq}(x)$, which defines equivalence classes on the domain of the interpretation.  Again, we take
$$x=x$$
\item A formula $\phi_{\leq_M}(x,y)$.  We take the disjunction of the following clauses:
\begin{enumerate}
\item $(x \leq _T y \wedge \forall z (x \leq_T z \leq_T y \rightarrow U(z) ))$
\item $(y \leq_T x \wedge \forall z (y \leq_T z \leq_T x \rightarrow \neg U(z))) $
\item $(U(y) \wedge \neg U(x)) \wedge$
$$\exists z \left( \begin{array}{l}
 z \leq_{T(M)} \lbrace x,y \rbrace \wedge \\
\forall w (z \leq_{T(M)} w \leq_{T(M)} y \rightarrow U(w)) \wedge \\
\forall w (z \leq_{T(M)} w \leq_{T(M)} x \rightarrow \\
\hspace{20pt}\left( \begin{array}{c}
(U(w) \rightarrow \forall v (z \leq_{T(M)} v \leq w \rightarrow U(v)))\wedge \\
(\neg U(w) \rightarrow \forall v (w \leq_{T(M)} v \leq x \rightarrow \neg U(v)) )
\end{array} \right)
\end{array} 
 \right)$$
\end{enumerate}

\item A formula $\phi_{r}(x)$.  We take
$$
\forall z \neg (z \leq x )
$$
\end{enumerate}
While $\phi_{Dom}$, $\phi_{Eq}$ and $\phi_{r}$ are self-explanatory, to show that $\phi_{\leq_{M}}$ does what is required of it, we examine it clause by clause.

Clause (a) shows that when both $x$ and $y$ lie in the same $X_i$ for some $i$ and $x \leq_{T(M)} y$ then $x \leq_M y$.  Clause (b) shows that when both $x$ and $y$ lie in the same $Y_i$ for some $i$ and $y \leq_{T(M)} x$ then $x \leq_M y$.  Clause (c) covers when $y \in X_i$ and $x \in Y_{i+1} \cup Y_{i-1}$ for some $i$, one instance of which is depicted in Figure 3.  No clause is required for $y \in Y_i$ and $x \not\in Y_i$, because if $x \leq_M y$ then $x \in Y_i$
\end{proof}

\begin{figure}[h!]
\begin{center}
\begin{tikzpicture}[scale=0.13]
\draw (10,-10) -- (10,30);
\draw (20,30) -- (10,10) -- (30,30);
\fill[white] (25,30) ellipse (5 and 2.5);
\draw (25,30) ellipse (5 and 2.5);
\fill (10,20) circle (0.5);
\fill (10,5) circle (0.5);
\fill (17.5,20) circle (0.5);
\draw[anchor=east] (10,20) node {$y$};
\draw[anchor=east] (10,5) node {$z$};
\draw[anchor=south] (17.5,20) node {$x$};
\draw[anchor=south] (10,30) node {$X_i$};
\draw (25,30) node {$Y_{i+1} $};
\draw[anchor=north] (20,-12) node {$T(M)$};

\draw (50,-10) -- (50,30);
\draw (60,-10) -- (50,10) -- (70,-10);
\fill[white] (65,-10) ellipse (5 and 2.5);
\draw (65,-10) ellipse (5 and 2.5);
\fill (50,20) circle (0.5);
\fill (50,5) circle (0.5);
\fill (57.5,0) circle (0.5);
\draw[anchor=east] (50,20) node {$y$};
\draw[anchor=east] (50,5) node {$z$};
\draw[anchor=north] (57.5,0) node {$x$};
\draw[anchor=south] (50,30) node {$X_i$};
\draw (65,-10) node {$Y_{i+1} $};
\draw[anchor=north] (60,-12) node {$M$};
\end{tikzpicture}
\end{center}
\caption{Clause (c) of $\phi_{\leq_{M}}$ in Lemma \ref{lemma:MinterpretableinT(M)}}
\end{figure}
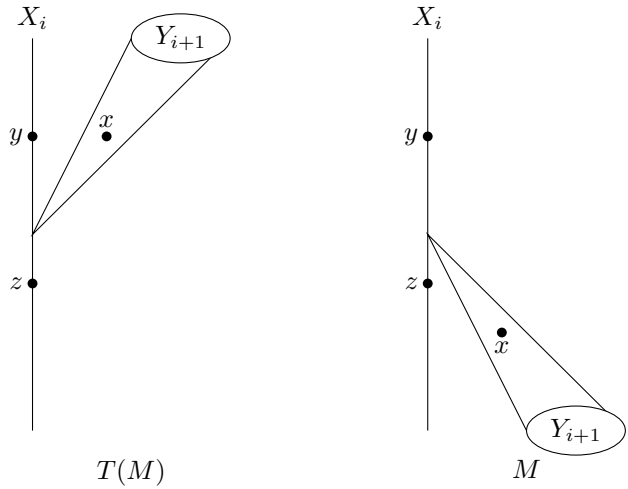\label{FigureClause3}

\begin{lemma}\label{lemma:T(M)andisomorphism}
Suppose $M_0$ and $M_1$ are connected CFPOs with fixed points $r_0$ and $r_1$ respectively.  Then $\langle M_0, \leq_{M_0}, r_0 \rangle \cong \langle M_1, \leq_{M_1}, r_1 \rangle$ if and only if $$\langle T(M_0), \leq_{{T(M_0)}}, U_{T(M_0)} \rangle \cong \langle T(M_1), \leq_{{T(M_1)}}, U_{T(M_1)} \rangle$$
\end{lemma}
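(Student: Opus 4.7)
The plan is to prove the two directions separately. The forward direction will be a direct verification that an isomorphism of pointed CFPOs is automatically an isomorphism of the associated coloured trees, and the reverse direction will fall out of Lemma \ref{lemma:MinterpretableinT(M)} via general interpretability.

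For the forward direction, I take an isomorphism $\phi: \langle M_0, \leq_{M_0}, r_0 \rangle \to \langle M_1, \leq_{M_1}, r_1 \rangle$ and argue that $\phi$, viewed as a map on underlying sets, is also an isomorphism $T(M_0) \to T(M_1)$. Since isomorphisms of CFPOs preserve paths, one has $\phi[\path{r_0, t}] = \path{r_1, \phi(t)}$ for every $t$, and because $s \leq_{T(M)} t$ is defined by $s \in \path{r, t}$, this immediately gives preservation of $\leq_{T(M)}$. For the predicate $U$ I induct on $n$: the sets $X_n, Y_n$ are built recursively using only $\leq_M$, $r$, and the previous $X_i, Y_i$, all of which are preserved by $\phi$, so one obtains $\phi[X_n^{M_0}] = X_n^{M_1}$ and $\phi[Y_n^{M_0}] = Y_n^{M_1}$; hence $\phi$ preserves $U = \bigcup X_i$.

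For the reverse direction, I apply Lemma \ref{lemma:MinterpretableinT(M)} to each of $M_0$ and $M_1$, obtaining uniform formulas that recover $\langle M_i, \leq_{M_i}, r_i \rangle$ from $\langle T(M_i), \leq_{T(M_i)}, U \rangle$. Any isomorphism $\psi: T(M_0) \to T(M_1)$ preserves the truth of these formulas, and since both $\phi_{\mathrm{Dom}}$ and $\phi_{\mathrm{Eq}}$ in that lemma are simply $x = x$, no restriction or quotienting is needed: the same underlying map $\psi$ is an isomorphism $\langle M_0, \leq_{M_0}, r_0 \rangle \to \langle M_1, \leq_{M_1}, r_1 \rangle$. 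I do not anticipate a genuine obstacle here, as the reverse direction is a black-box consequence of the preceding lemma and the only item requiring even mild attention in the forward direction is the inductive step on $X_n$ and $Y_n$, which is essentially automatic from the recursive definition.
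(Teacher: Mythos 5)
Your proposal is correct and follows essentially the same route as the paper: the forward direction is the observation that $\leq_{T(M)}$ and $U$ are built only from path-betweenness, $\leq_M$ and the fixed point, all preserved by an isomorphism of the pointed structures, and the reverse direction is exactly the paper's appeal to Lemma \ref{lemma:MinterpretableinT(M)} together with the fact that the domain formula of that interpretation is $x=x$. The only difference is that you spell out the induction on the $X_n$, $Y_n$ explicitly, which the paper leaves implicit.
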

\begin{proof}
Since we constructed $\leq_T$ and $U$ using path-betweenness and $\leq_M$, both of which are preserved by isomorphism,
$$
\begin{array}{c}
\langle M_0, \leq_{M_0}, r_0 \rangle \cong \langle M_1, \leq_{M_1}, r_1 \rangle \Rightarrow \\
\langle T(M_0), \leq_{{T(M_0)}}, U_{T(M_0)} \rangle \cong \langle T(M_1), \leq_{{T(M_1)}}, U_{T(M_1)} \rangle
\end{array}
$$
The other direction of the isomorphism is a consequence of the fact that in Lemma \ref{lemma:MinterpretableinT(M)} the domain of the interpretation is $T(M)$ itself.
\end{proof}

This second lemma shows that the construction behaves when we take certain substructures.  We will take from $M$ an extended cone $C$, and show that $T(C)$ is isomorphic to either the corresponding substructure of $T(M)$, or the corresponding substructure with the roles of $U$ and $\neg U$ reversed.

\begin{lemma}\label{lemma:T(M)andsubstructure}
Let $r$ be a fixed point of $M$ and let $x \in M$.  We define
$$N := \lbrace y \in M \: : \: x \in \path{y,r} \rbrace$$
If we add a colour to $N$ which is only realised by $x$ (to ensure that $x$ is a fixed point of $N$ as a structure in its own right), and use $x$ to construct
$$\langle T(N), \leq_{T(N)}, U_{T(N)} \rangle$$ then if $x \in X$ (recall Definition \ref{Def:XandY}) then
$$\langle N , \leq_{T(M)}, U_{T(M)} \rangle \cong \langle T(N), \leq_{T(N)}, U_{T(N)} \rangle$$
otherwise $x \in M \setminus X$ (recall Definition \ref{Def:XandY}) implies that
$$\langle N , \leq_{T(M)}, U_{T(M)} \rangle \cong \langle T(N), \leq_{T(N)}, \neg U_{T(N)} \rangle$$
\end{lemma}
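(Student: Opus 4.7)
The plan is to verify that the identity map on $N$ is the desired isomorphism, by checking that $\leq_{T(M)}|_N = \leq_{T(N)}$ and comparing the two unary predicates directly; the two cases of the lemma reflect how the colouring behaves at the single point $x$.

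For the order, fix $s, t \in N$. Because $x \in \path{t, r}$ and $M$ is cycle-free, the path $\path{r, t}$ decomposes as $\path{r, x} \cup \path{x, t}$ with the two pieces meeting only at $x$. No element of $N \setminus \lbrace x \rbrace$ can lie on $\path{r, x}$ (otherwise its own path to $r$ would not pass through $x$), so for such $s$ we have $s \in \path{r, t}$ iff $s \in \path{x, t}$, and the case $s = x$ is immediate on both sides. Furthermore, $\path{x, t}$ computed in $M$ coincides with $\path{x, t}$ computed in $N$: every element on a maximal chain or connecting set certifying this path must itself be in $N$, since leaving $N$ would produce a second path to $r$, contradicting cycle-freeness. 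Hence $\leq_{T(M)}|_N$ and $\leq_{T(N)}$ coincide as orders on $N$.

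For the colouring on $N \setminus \lbrace x \rbrace$, I would first establish a clean characterization: for $t \neq r$, $t$ lies in $X$ iff the immediate predecessor of $t$ on $\path{r, t}$ is $\leq_M t$, and in $Y$ iff that predecessor is $\geq_M t$. This is a routine induction on the level index of Definition \ref{Def:XandY}, exploiting the alternation built into the definitions of $X_n$ and $Y_n$. For $t \in N \setminus \lbrace x \rbrace$, the immediate predecessor of $t$ on $\path{r, t}$ equals the immediate predecessor of $t$ on $\path{x, t}$, so applying the characterization with root $r$ in $M$ and with root $x$ in $N$ yields $U_{T(M)}(t) \iff U_{T(N)}(t)$.

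Finally, at $x$, the root of $T(N)$ is automatically placed in $X_0$, so $U_{T(N)}(x)$ always holds, whereas $U_{T(M)}(x)$ holds iff $x \in X$. If $x \in X$, both values agree and the identity is the asserted isomorphism onto $\langle T(N), \leq_{T(N)}, U_{T(N)}\rangle$. Otherwise the identity carries $U_{T(M)}|_N$ to the predicate obtained from $U_{T(N)}$ by toggling its value at the root, which is the content of the $\neg U_{T(N)}$ clause. The main technical obstacle is the ``last edge'' characterization in the second step: the inductive verification has to be done carefully, because the set-subtractions in the definitions of $X_n$ and $Y_n$ can obscure the alternation once the root is rebased from $r$ to $x$; once this characterization is cleanly extracted, the rest of the argument is formal manipulation of paths.
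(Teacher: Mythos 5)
Your overall strategy --- the identity map, the decomposition $\path{r,t}=\path{r,x}\cup\path{x,t}$, and reading off $U$ from the direction of the final leg of the path --- is exactly the idea behind the paper's (one-line) proof, and your verification that $\leq_{T(M)}|_N=\leq_{T(N)}$ is fine. One minor point first: ``the immediate predecessor of $t$ on $\path{r,t}$'' need not exist, since the last maximal chain of the path may be dense; the characterisation should be phrased as ``$t\in X$ iff $t$ is the upper endpoint of the last chain $\sigma_{n-1}$ of $\path{r,t}$'', which is what your induction actually produces.

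The substantive problem is that your final step does not prove the statement as written. What your analysis yields is that $U_{T(M)}$ and $U_{T(N)}$ agree on all of $N\setminus\lbrace x\rbrace$ and differ at $x$ exactly when $x\notin X$; you then assert that toggling $U_{T(N)}$ at the single point $x$ ``is the content of the $\neg U_{T(N)}$ clause''. It is not: $\neg U_{T(N)}$ is the complement of $U_{T(N)}$ on all of $T(N)$. Concretely, take $x\in Y_j$ and $t\in N$ with $t<_M x$: the final leg of $\path{r,t}$ is descending, so $U_{T(M)}(t)$ fails, and $t\in Y_0^N$ so $U_{T(N)}(t)$ fails as well, whereas the second clause of the lemma demands $U_{T(M)}(t)\Leftrightarrow\neg U_{T(N)}(t)$. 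So either you must establish genuine global complementation (which your own leg-direction computation contradicts), or you should state and prove the single-point-toggle version explicitly; as it stands the last paragraph is a non sequitur that conceals a real discrepancy between what you have shown and what the lemma asserts. For the purpose the lemma serves in Theorem \ref{thm:fixedpoint} --- that the substructure of $T(M)$ on $N$ is determined up to isomorphism by the isomorphism type of $N$ together with the class of $x$ in $\mathcal{X}$ --- your version is just as serviceable, since $x$ carries its own colour in $N$; but that needs to be said openly rather than papered over.
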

\begin{proof}
This is a simple consequence of the fact that $\path{y,r} = \path{y,x} \cup \path{x,r}$ for all $y \in N$
\end{proof}

\begin{lemma}
The members of $\mathcal{X}$ are preserved setwise by $\aut(M)$.
\end{lemma}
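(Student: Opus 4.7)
The plan is to prove the statement by induction on $n$, showing simultaneously that each $X_n$ and $Y_n$ is preserved setwise by every $\sigma \in \aut(M)$. The proof is driven entirely by the recursive definitions of $X_n$ and $Y_n$ together with the hypothesis that $r$ is a fixed point; no path-theoretic machinery is required here.

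For the base case $n = 0$, fix any $\sigma \in \aut(M)$. Since $\sigma(r) = r$ and $\sigma$ preserves $\leq_M$, the sets
$$ X_0 = \{ t \in M : r \leq_M t \} \quad \text{and} \quad Y_0 = \{ t \in M : t <_M r \} $$
are each preserved setwise as preimages of $\{r\}$ under order-theoretically definable conditions.

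For the inductive step, assume that $X_i$ and $Y_i$ are each preserved setwise by $\aut(M)$ for all $i < n$. Then $\bigcup_{i<n}(X_i \cup Y_i)$ is also preserved setwise. The set $\{ t \in M : y \leq_M t \text{ for some } y \in Y_{n-1}\}$ is preserved setwise because $\sigma$ maps $Y_{n-1}$ to itself and preserves $\leq_M$; a fortiori its set difference with $\bigcup_{i<n}(X_i \cup Y_i)$, which is $X_n$, is preserved setwise. An identical argument, starting from the preservation of $X_{n-1}$, yields that $Y_n$ is preserved setwise.

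The only potential subtlety is ensuring that the recursive definition cleanly commutes with $\sigma$ despite the set-subtraction in the definition, but this is immediate once the preservation of all lower-index sets is in hand, so the argument presents no real obstacle.
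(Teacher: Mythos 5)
Your proof is correct and follows essentially the same route as the paper: induction on $n$, with the base case coming from the fixed point $r$ and the inductive step using that automorphisms preserve $\leq_M$, map $Y_{n-1}$ (resp.\ $X_{n-1}$) to itself, and respect the set subtraction by the lower-index induction hypothesis. The paper phrases the step element-by-element (using $\phi^{-1}$ to rule out $\phi(x_n)$ landing in an earlier class) where you argue at the level of sets, but the content is identical.
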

\begin{proof}
All automorphisms fix $r$, so $X_0$, the points greater than $r$, and $Y_0$, the points less than $r$, are fixed setwise.

Let $x_n \in X_n$ and let $y_{n-1} \in Y_{n-1}$ with $y_{n-1} \leq_M x_n $, and assume as an induction hypothesis that for $i < n$ both $X_i$ and $Y_{i}$ are fixed setwise by $\aut(M)$.  Let $\phi \in \aut(M)$ be arbitrarily chosen.  By the induction hypothesis $\phi(y_{n-1}) \in Y_{n-1}$, and since $\phi$ is an automorphism $\phi(y_{n-1}) \leq_M \phi(x_n)$.  If $\phi(x_n) \in \bigcup_{i < n}(X_i \cup Y_i)$ then $\phi^{-1}$ violates the induction hypothesis, so $X_n$ is preserved by $\aut(M)$.  The argument for $Y_n$ is identical.
\end{proof}

\begin{lemma}\label{lemma:TsetwiseX}
$\aut(T)$ preserves the members of $\mathcal{X}$ setwise.
\end{lemma}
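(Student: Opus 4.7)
The plan is to reduce this to the previous lemma using the interpretation of Lemma \ref{lemma:MinterpretableinT(M)}. In that interpretation, $\phi_{Dom}$ and $\phi_{Eq}$ are both trivial (each equals $x=x$), so the underlying set of $T(M)$ coincides with $|M|$ and the interpreted structure is literally $\langle M, \leq_M, r\rangle$. Consequently, any $\phi \in \aut(T(M))$, acting as a permutation of the common underlying set, must preserve every relation definable in $\langle T(M), \leq_{T(M)}, U\rangle$; in particular, via $\phi_{\leq_M}$ and $\phi_{r}$, it preserves $\leq_M$ and fixes $r$.

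Combined with the fact that the construction of $T(M)$ does not strip any colour predicates from $M$, this shows that $\aut(T(M))$ is contained in $\aut(M)$ as a subgroup of $\textnormal{Sym}(|M|)$. The previous lemma then immediately yields that $\aut(T(M))$ preserves each $X_n$ and each $Y_n$ setwise, i.e.\ preserves the members of $\mathcal{X}$.

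I do not foresee any real obstacle; this is a short corollary of the two immediately preceding lemmas. The only point that warrants a moment's thought is the inclusion $\aut(T(M)) \hookrightarrow \aut(M)$: it relies on the interpretation being on-the-nose (trivial domain and equivalence), so that the abstract permutations of the two underlying sets are literally the same object, and on the tacit understanding that the construction $M \mapsto T(M)$ only adds the predicate $U$ rather than forgetting any structure.
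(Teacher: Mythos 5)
Your proof is correct, but it takes a genuinely different route from the paper's. The paper argues directly inside $\langle T(M), \leq_{T(M)}, U\rangle$: for $x \in X_n$ it extracts a maximal chain $x_1 \leq_{T(M)} \cdots \leq_{T(M)} x_n = x$ on which the predicate $U$ alternates exactly $n$ times (with monochromatic convex pieces in between), observes that any automorphism of $T(M)$ must carry this chain to one of the same combinatorial type below the image of $x$, and concludes that the alternation count, hence the index $n$, is invariant. You instead pull everything back to $M$: since the interpretation of Lemma \ref{lemma:MinterpretableinT(M)} has trivial domain and equivalence formulas, every automorphism of $T(M)$ preserves the definable relations $\phi_{\leq_M}$ and $\phi_{r}$, hence is an automorphism of $\langle M, \leq_M, r\rangle$, and the preceding lemma (that $\aut(M)$ preserves the members of $\mathcal{X}$) finishes the job. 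Both arguments are sound. Yours is shorter and, as a by-product, establishes the containment $\aut(T(M)) \leq \aut(M)$ of permutation groups, which is in effect half of Theorem \ref{thm:fixedpoint}; it does, however, lean on two things you rightly flag: the correctness of the clause-by-clause verification of $\phi_{\leq_M}$, and the tacit convention that $T(M)$ retains all colour predicates of $M$ (only the colour of $r$ is mentioned explicitly in Definition \ref{Def:XandY}, though the convention is forced if Theorem \ref{thm:fixedpoint} is to hold). Note also that for this particular lemma the colour issue could be sidestepped entirely: the proof of the preceding lemma uses only that automorphisms preserve $\leq_M$ and fix $r$, both of which your interpretation argument already delivers without any appeal to colours. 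The paper's argument buys self-containedness within $T(M)$ and independence from the interpretation lemma; yours buys brevity and reuse.
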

\begin{proof}
Let $x \in X_n$.  Since $T \models U(x)$ and $T \models \neg U(y)$ for all $y \in \bigcup Y_i$, we cannot map $x$ to any member of $\bigcup Y_i$.  By taking a witness that $x \in X_n$, and a witness that that witness lies in $Y_{n-1}$ and so on, we obtain a maximal chain $x_1 \leq_{T(M)} x_2 \leq_{T(M)} \ldots x_n (=x)$ such that $U(x_i)$ if and only if $\neg U(x_{i-1})$ and $\neg U(x_{i+1})$, with the additional property that for all $x_i \leq_{T(M)} t \leq_{T(M)} x_{i+1}$ either $[x_i, t]$ or $[t , x_i]$ is monochromatic.

Any automorphism would have to send this chain to a similar chain below the image of $x$, but the length of this chain is determined by $n$, thus all images of $x$ lie in $X_n$.  A similar argument shows the same for $Y_n$, and so we conclude that $\aut(T(M))$ preserves the members of $\mathcal{X}$ setwise.
\end{proof}

\begin{theorem}\label{thm:fixedpoint}
$\aut(\langle M, \leq_M \rangle) \cong_P \aut(\langle T(M), \leq_{T(M)}, U \rangle)$
\end{theorem}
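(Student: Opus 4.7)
The strategy is to exploit the fact that $|M|$ and $|T(M)|$ share the same underlying set, and argue that the two automorphism groups coincide as subgroups of $\mathrm{Sym}(|M|)$; since that is precisely what a permutation isomorphism via the identity map amounts to, it delivers $\cong_P$.

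First I would show the inclusion $\aut(\langle M, \leq_M\rangle) \subseteq \aut(\langle T(M), \leq_{T(M)}, U\rangle)$. Let $\phi \in \aut(M)$. By hypothesis $\phi(r) = r$, and $\phi$ preserves path-betweenness since paths are definable from $\leq_M$ alone (they live in $M^D$ and are determined by the connecting-set/maximal-chain conditions). Because $\leq_{T(M)}$ was defined by $s \leq_{T(M)} t \iff s \in \path{r,t}$, $\phi$ preserves $\leq_{T(M)}$. The preceding lemma already verifies that the family $\mathcal{X}$ is setwise preserved by $\aut(M)$, hence $X = \bigcup_i X_i$ is preserved, i.e.\ $\phi$ preserves the predicate $U$.

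For the reverse inclusion $\aut(\langle T(M), \leq_{T(M)}, U\rangle) \subseteq \aut(\langle M, \leq_M\rangle)$, I would invoke Lemma~\ref{lemma:MinterpretableinT(M)} directly. There the interpretation of $\langle M, \leq_M, r\rangle$ in $\langle T(M), \leq_{T(M)}, U\rangle$ uses the trivial domain and equality formulas, so the interpretation is carried out on the actual underlying set of $T(M)$, and the relation $\leq_M$ is given by an $\mathcal{L}(\leq_{T(M)}, U)$-formula $\phi_{\leq_M}(x,y)$. Any automorphism of $\langle T(M), \leq_{T(M)}, U\rangle$ therefore preserves the relation $\phi_{\leq_M}$, which is $\leq_M$, and so restricts to a permutation of $|M|$ preserving $\leq_M$. (The fixed point $r$ is also recovered from $T(M)$ by $\phi_r$, so no choice of base point is lost.)

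Combining these inclusions gives $\aut(M) = \aut(T(M))$ as subgroups of $\mathrm{Sym}(|M|)$, which is exactly the statement that the identity map on $|M|$ provides a permutation isomorphism of $\aut(\langle M, \leq_M\rangle)$ with $\aut(\langle T(M), \leq_{T(M)}, U\rangle)$. I expect no real obstacle here: the work has been done in Lemmas~\ref{lemma:MinterpretableinT(M)} and~\ref{lemma:TsetwiseX}, and the only thing that could go wrong is a subtlety about colours; but since every automorphism of $M$ fixes $r$ and the construction of $T(M)$ only adds the single new predicate $U$ in addition to the (inherited) colouring, the colour predicates translate verbatim in both directions and cause no trouble.
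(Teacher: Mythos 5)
Your argument is correct, and it is genuinely different from the proof in the paper. You exploit the fact that $M$ and $T(M)$ share the same underlying set and show the two automorphism groups are literally equal inside $\mathrm{Sym}(|M|)$: one containment because $\leq_{T(M)}$ and $U$ are $\aut(M)$-invariant (paths from the fixed point $r$ are canonically determined by $\leq_M$, and the unlabelled lemma preceding Lemma~\ref{lemma:TsetwiseX} gives setwise preservation of $\mathcal{X}$), and the other because $\leq_M$ is $\emptyset$-definable in $\langle T(M),\leq_{T(M)},U\rangle$ by the interpretation of Lemma~\ref{lemma:MinterpretableinT(M)}, whose domain and equality formulas are trivial, so every automorphism of $T(M)$ preserves it. The identity map then witnesses $\cong_P$. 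The paper instead invokes Proposition~\ref{Lemma:NeededInChapter3} to reduce $\cong_P$ to the coincidence of the $1$- and $2$-orbits of the two groups, and establishes that coincidence by an induction along $\mathcal{X}$, transferring isomorphisms of extended cones back and forth via Lemmas~\ref{lemma:T(M)andisomorphism} and~\ref{lemma:T(M)andsubstructure} and patching automorphisms together at meets. Your route is shorter and makes the permutation isomorphism completely explicit; note that it carries essentially the same logical debt as the paper's, since Lemma~\ref{lemma:T(M)andisomorphism} (used throughout the paper's orbit computation) also rests on the interpretation of Lemma~\ref{lemma:MinterpretableinT(M)}. The paper's longer computation does yield, as a by-product, a concrete description of how orbits of $T(M)$ are built from isomorphism types of extended cones, which is in the spirit of the constructions in Section~\ref{Section:CFPOn}. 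One small wording caveat: paths are not literally first-order definable from $\leq_M$ (they pass through $M^D$); what you need, and what is true, is that they are invariant under $\aut(M)$, so you should phrase that step in terms of invariance rather than definability.
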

\begin{proof}
Proposition \ref{Lemma:NeededInChapter3} shows that if all the 1- and 2-orbits of $M$ coincide with the 1- and 2-orbits of $T(M)$ then $\aut(T(M)) \cong_P \aut(M)$.  We will start with the 1-orbits, which we will prove by induction on $\mathcal{X}$.

Since $\langle X_0, \leq_M \rangle$ is a tree 
$$\langle X_0, \leq_M \rangle = \langle X_0, \leq_{T(M)} \rangle$$
and since $\langle X_0, \leq_{T(M)}, U_{T(M)} \rangle$ is monochromatic,
$$\aut(\langle X_0, \leq_M \rangle) \cong_P \aut(\langle X_0, \leq_{T(M)}, U_{T(M)} \rangle)$$

From this we conclude that for all $a,b \in X_0$, if $a$ and $b$ lie in different orbits of $M$ but the same orbits of $T$ then
$$\langle \lbrace t \in M \: : \: a \in \path{t,r} \rbrace, \leq_M \rangle \not\cong \langle \lbrace t \in M \: : \: b \in \path{t,r} \rbrace, \leq_M \rangle$$
and
$$
\begin{array}{c}
\langle \lbrace t \in M \: : \: a \leq_{T(M)} t \rbrace, \leq_{T(M)}, U_{T(M)} \rangle \quad \quad  \quad \quad  \quad \\
\cong \\
\quad \quad \quad  \quad \quad  \langle \lbrace t \in M \: : \: b \leq_{T(M)} t \rbrace, \leq_{T(M)}, U_{T(M)} \rangle
\end{array}
$$

However, this contradicts Lemma \ref{lemma:T(M)andsubstructure}, so if $a$ and $b$ lie in the same orbit of $T(M)$ then they lie in the same orbit of $M$.  By symmetry, we also conclude that if $a$ and $b$ lie in the same orbit of $M$ then they lie in the same orbit of $T(M)$.  Similarly, if $a,b \in Y_0$ then $a$ and $b$ lie in the same orbit of $M$ if and only if they lie in the same orbit of $T(M)$.

So now suppose that for $i < n$ the 1-orbits on $X_i$ and $Y_i$ from $\aut(M)$ and $\aut(T(M))$ coincide and let $x,y \in X_n$.  We define, as we did in Lemma \ref{lemma:TsetwiseX}, $x_1, \dots x_n$ and $y_1, \ldots, y_n$, which are linearly ordered by $\leq_{T(M)}$, are the connecting sets of $\path{x,r}$ and $\path{y,r}$ in $\leq_{M}$.

If $x_n$ and $y_n$ belong to the same orbit of $M$ then the automorphism that witnesses this also witnesses that $x_{n-1}$ and $y_{n-1}$ lie in the same orbit of $M$, and hence by our induction hypothesis, the same orbit of $T$.  Since there is an automorphism that maps $x_{n-1}$ to $y_{n-1}$,
$$\langle \lbrace z \in M \: : \: x_{n-1} \in \path{r,z} \rbrace, \leq_M \rangle \cong \langle \lbrace z \in M \: : \: y_{n-1} \in \path{r,z} \rbrace, \leq_M \rangle$$
and hence (using Lemmas \ref{lemma:T(M)andisomorphism} and \ref{lemma:T(M)andsubstructure})
$$
\begin{array}{c}
\langle \lbrace z \in M \: : \: x_{n-1} \in \path{r,z} \rbrace, \leq_{T(M)}, U_{T(M)} \rangle \quad \quad  \quad \quad  \quad \\
\cong \\
\quad \quad \quad  \quad \quad \langle \lbrace z \in M \: : \: y_{n-1} \in \path{r,z} \rbrace, \leq_{T(M)}, U_{T(M)} \rangle
\end{array}
$$
And so there is an isomorphism of $T$ that maps $x_n$ to $y_n$.
The arguments for $x_n, y_n$ being in the same orbit of $T$, and for $x_n, y_n \in Y_n$ are, again, extremely similar, and so omitted.

We now turn out attention to the 2-orbits.  Since $r$ is fixed by both $\aut(M)$ and $\aut(T)$, the 1-orbits can be thought of as 2-orbits where one of the elements is $r$, and the 2-orbits can be thought of as 3-orbits where $r$ is one of the elements.  This viewpoint is exploited to show the coincidence of the 2-orbits of $\aut(M)$ and $\aut(T)$.

Suppose $(x_0,x_1)$ and $(y_0,y_1)$ lie in the same orbit of $M$.  We need only consider the case when $x_0 \in \path{x_1, r}$ as otherwise we can take $x_2$ to be the intersection of $\path{x_0,r}$, $\path{x_0,x_1}$ and $\path{x_1,r}$, and patch automorphisms together around $x_2$.  Note that $x_2$ would be the meet of $x_0$ and $x_1$ in $T(M)$.

There is an automorphism of $M$ that maps $x_0$ to $y_0$, and as we have just seen, this means that
$$\langle \lbrace z \in M \: : \: x_{0} \in \path{r,z} \rbrace, \leq_M \rangle \cong \langle \lbrace z \in M \: : \: y_{0} \in \path{r,z} \rbrace, \leq_M \rangle$$
Since $(x_0,x_1)$ and $(y_0,y_1)$ lie in the same orbit of $M$, there is an isomorphism from
$$\langle \lbrace z \in M \: : \: x_{0} \in \path{r,z} \rbrace, \leq_M \rangle \; \mathrm{to} \; \langle \lbrace z \in M \: : \: y_{0} \in \path{r,z} \rbrace, \leq_M \rangle$$
that maps $x_1$ to $y_1$.  By Lemmas \ref{lemma:T(M)andisomorphism} and \ref{lemma:T(M)andsubstructure} this results in an isomorphism from
$$\langle \lbrace z \in M \: : \: x_{0} \leq_{T(M)} z \rbrace, \leq_{T(M)}, U_{T(M)} \rangle$$
to
$$\langle \lbrace z \in M \: : \: y_{0} \leq_{T(M)} z \rbrace, \leq_{T(M)}, U_{T(M)} \rangle$$
which maps $x_1$ to $y_1$.  We call this isomorphism $\phi$, and we take any automorphism that takes $x_0$ to $y_0$ and call it $\psi$.  The function
$$
\theta(t) := \left\lbrace
\begin{array}{c l}
\phi(t) & t \geq_{T(M)} x_0 \\
\psi(t) & \mathrm{otherwise}
\end{array} \right.
$$
is an automorphism of $T$ which maps $(x_0,x_1)$ to $(y_0,y_1)$, and thus the 2-orbits of $T$ contain the 2-orbits of $M$.

Once again, the argument to show that the 2-orbits of $M$ contain the 2-orbits of $T$ is extremely similar, due to the symmetric nature of Lemmas  \ref{lemma:T(M)andisomorphism} and \ref{lemma:T(M)andsubstructure}, and thus we conclude that the 2-orbits of $M$ and $T$ coincide, and so
$$\aut(\langle M, \leq_M \rangle) \cong_P \aut(\langle T(M), \leq_{T(M)}, U \rangle)$$
\end{proof}

Lots of CFPOs have fixed points, but the CFPOs of the kind discussed in the next lemma reoccur frequently.

\begin{lemma}\label{FixedPointPartition} 
Let $M$ be a connected CFPO.  If there are connected $A,B \subsetneq M$ which are disjoint and fixed setwise by $\aut(M)$ then there are $c,d$ which are fixed points of $M$ and $$\path{A,B}= \path{c,d}$$
\end{lemma}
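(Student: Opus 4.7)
The plan is to construct explicit ``closest points'' $c \in A$ and $d \in B$ via a projection argument, show $\path{A,B} = \path{c,d}$, and then observe that setwise preservation of $A$ and $B$ forces $c$ and $d$ to be fixed points.

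First I would establish \textbf{path-convexity}: for $x,y \in A$, the unique path $\path{x,y}$ in $M$ lies entirely in $A$. Indeed, the path within $A$ exists by connectedness of $A$, and lifted to $M$ it must coincide with $\path{x,y}$ by uniqueness of paths in a CFPO. The same holds for $B$. Next, fix arbitrary $a \in A$ and $b \in B$ and define $\pi_A(b)$ to be the point of $A \cap \path{a,b}$ closest to $b$ along $\path{a,b}$. Path-convexity of $A$ forces $A \cap \path{a,b}$ to be an initial sub-segment of the path --- a ``gap'' where the intersection exited $A$ and re-entered would, via a path in $A$ joining the exit and re-entry points, close a forbidden cycle --- so the closest point exists.

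I would then show $\pi_A(b)$ is independent of $a$: for a second $a' \in A$ the three paths $\path{a,b}$, $\path{a',b}$, $\path{a,a'}$ admit a common branch point by the triangle principle of CFPOs, and path-convexity of $A$ places this branch point inside $A$; unwinding the definitions forces it to equal $\pi_A(b)$ computed from either of $a$ or $a'$. A symmetric argument using $\path{b,b'} \subseteq B$ shows $\pi_A(b)$ is also independent of the choice of $b \in B$, so we may call the common value $c$. Define $d \in B$ analogously.

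Running the same projection argument with $c$ in place of $a$ yields $d = \pi_B(c)$, hence the uniform decomposition
\[
\path{a,b} = \path{a,c} \cup \path{c,d} \cup \path{d,b}
\]
for every $a \in A$, $b \in B$. This gives $\path{c,d} \subseteq \bigcap_{a,b}\path{a,b} = \path{A,B}$, and the reverse inclusion is immediate because $(c,d) \in A \times B$ appears as an index in the defining intersection. So $\path{A,B} = \path{c,d}$. Finally, any $\phi \in \aut(M)$ preserves the order and hence paths, and fixes $A$ and $B$ setwise by hypothesis, so $\phi(c) = \phi(\pi_A(b)) = \pi_A(\phi(b)) = c$ by $B$-independence of $\pi_A$, and likewise $\phi(d) = d$.

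The main obstacle is the projection-well-definedness step: justifying the triangle principle and the ``no gap'' property of $A \cap \path{a,b}$ inside the connecting-set formalism, especially in degenerate cases where the branch point coincides with one of the three given points or where maximal chains in the paths are infinite. Once these are secured, the rest of the argument assembles routinely.
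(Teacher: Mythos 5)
Your overall architecture is the same as the paper's: produce a point $c$ of $A$ ``nearest'' to $B$ and a point $d$ of $B$ nearest to $A$, show $\path{A,B}=\path{c,d}$, and then conclude that $c$ and $d$ are fixed because $\path{A,B}$ is $\aut(M)$-invariant (that last step is exactly the paper's closing computation). The genuine gap is the existence of your projection $\pi_A(b)$. Path-convexity does show that $A\cap\path{a,b}$ is an initial sub-segment of the path, but an initial segment need not have a last element: the chains making up $\path{a,b}$ may be densely ordered, and $A$ can meet such a chain in a cut with no endpoint. (Picture a coloured copy of $\mathbb{Q}$ split at an irrational into an invariant lower set $A$ and upper set $B$: there $\path{A,B}=\emptyset$ and no fixed point exists at all, so the lemma genuinely needs a completeness hypothesis.) Hence ``initial sub-segment, so the closest point exists'' is false as stated, and this is precisely where the paper does its real work: it invokes Rubin completeness of $M$ (a standing hypothesis that surfaces explicitly in the proof), works with the finite sets $\path{x,y}^-$ of turning points, and runs a possibly transfinite induction over elements of $A$ in which each candidate set $C_\alpha$ is linearly ordered and bounded by turning points, so that completeness supplies the required extremal element at successor and limit stages. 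No purely combinatorial ``no gap'' argument can substitute for this.

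A secondary point: even once completeness hands you an extremal element of $A\cap\path{a,b}$, you still owe an argument that it lies in $A$ (your reverse inclusion $\path{A,B}\subseteq\path{c,d}$ uses $c\in A$ and $d\in B$ as indices of the intersection). The paper gets this at successor stages from $c_\alpha\in\path{c_{\alpha-1},a_\alpha}\subseteq A$ via connectedness of $A$. Your well-definedness arguments (independence of $a$ and of $b$, via cycle-freeness) and the final deduction that $\phi(c)=c$ are sound once existence is secured.
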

\begin{proof}
Let $M$ be a connected CFPO, and let $A,B$ be connected proper subsets of $M$ which are disjoint and fixed setwise by $\aut(M)$.  We use the notation $$\path{x,y}^- := \lbrace z \in \path{x,y} \: : \: \exists a,b \in \path{x,y} \; (z=(a\wedge b) \vee z=(a \vee b)) \rbrace$$
In words, $\path{x,y}^-$ are the local maxima and minima of $\path{x,y}$.  Just as with $\path{x,y}$, if $X$ and $Y$ are subsets of $M$ then:
$$
\begin{array}{r c l}
\path{x,Y}^- & := & \bigcap_{y \in Y } \path{x,y}^- \\
\path{X,y}^- & := & \bigcap_{x \in X } \path{x,y}^- \\
\path{X,Y}^- & := & \bigcap_{\substack{ x \in X \\ y \in Y }} \path{x,y}^-
\end{array}
$$
Note that $\path{x,y}^-$ always has finite cardinality.

We are going to find a fixed point using (possibly transfinite) induction.  Fix $b \in B$.

\begin{description}
\item[Base Case] Pick $a_0 \in A$.  We set $c_0 = a_0$ and let $D_0= \lbrace x \in A \: : \: c_0 \in \path{x,b} \rbrace$.
\item[Successor Step] Suppose we have $a_{\alpha-1}$, $c_{\alpha-1}$ and $D_{\alpha-1}$.

Pick $a_\alpha \in A \setminus D_{\alpha-1}$.  Since $b \in \path{c_{\alpha-1}, b}$ and $b \in \path{a_\alpha, b}$,
$$\path{\lbrace c_{\alpha-1}, a_\alpha \rbrace, b} \not= \emptyset$$

Let $$C_\alpha := \lbrace x \in \path{\lbrace c_{\alpha-1}, a_\alpha \rbrace, b} \: : \: |\path{\lbrace c_{\alpha-1}, a_\alpha \rbrace, b}^-| = |\path{x, b}^-| \rbrace$$
$C_\alpha$ is linearly ordered, and is bounded both above and below by elements of $\path{c_{\alpha-1}, b}^- \cup \path{a_\alpha, b}^-$.  Since $M$ is Rubin complete, $C_\alpha$ has both a maximal and a minimum element.

Let $c_\alpha \in C_\alpha$ be such that $\path{\lbrace c_{\alpha-1}, a_\alpha \rbrace, b} = \path{c_\alpha, b}$.

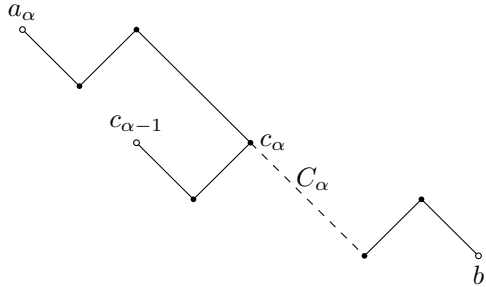
\begin{figure}[ht]
\begin{center}
\begin{tikzpicture}[scale=0.075]
\draw (0,40) -- (10,30) -- (20,40) -- (40,20);
\draw[dashed] (40,20) -- (60,0);
\draw (60,0) -- (70,10) -- (80,0);
\draw (40, 20) -- (30,10) -- (20,20);

\fill[white] (0,40) circle (0.5);
\fill[white] (20,20) circle (0.5);
\fill[white] (80,0) circle (0.5);
\draw (0,40) circle (0.5);
\draw (20,20) circle (0.5);
\draw (80,0) circle (0.5);

\fill (10,30) circle (0.5);
\fill (20,40) circle (0.5);
\fill (30,10) circle (0.5);
\fill (40,20) circle (0.5);
\fill (60,0) circle (0.5);
\fill (70,10) circle (0.5);

\draw[anchor=south] (0,40) node {$a_\alpha$};
\draw[anchor=south] (20,20) node {$c_{\alpha-1}$};
\draw[anchor=west] (40,20) node {$c_\alpha$};
\draw[anchor=north] (80,0) node {$b$};
\draw[anchor=south] (51,10) node {$C_\alpha$};

\end{tikzpicture}
\end{center}
\caption{Finding $c_\alpha$ in Lemma \ref{FixedPointPartition}}
\end{figure}

Since $A$ is connected, $\path{c_{\alpha-1}, a_\alpha} \subseteq A$, and since $c_\alpha \in \path{c_{\alpha-1}, a_\alpha}$, we have that $c_\alpha \in A$.

We define $D_\alpha= \lbrace x \in A \: : \: c_\alpha \in \path{x,b} \rbrace$.  If $D_\alpha = A$ then let $c = c_\alpha$ and stop.

\item[Limit Step] Let $n_\lambda = \min \lbrace |\path{c_\alpha,b}^-| \: : \: \alpha < \lambda \rbrace$.
$$C_\lambda := \lbrace x \in \path{ c_\alpha, b} \: : \: |\path{ c_\alpha, b}^-| = n_\lambda \rbrace$$
$C_\alpha$ is linearly ordered, and is bounded both above and below by elements of $\bigcup_{\alpha < \lambda} \path{c_{\alpha}, b}^-$, so has both a maximal and minimal element.

Let $c_\lambda \in C_\lambda$ be such that $\path{c_\lambda, b} \subseteq \path{\lbrace c_{\alpha}, a_\alpha \rbrace, b}$.  We define $D_\lambda= \lbrace x \in A \: : \: c_\lambda \in \path{x,b} \rbrace$.  If $D_\lambda = A$ then let $c = c_\lambda$ and stop.
\end{description}
We have found a $c$ such that $c \in \path{A,b}$.  If we repeat this induction, fixing $c$ and choosing $b_\alpha$ from $B$ then we find a $d$ such that
$$ \path{c,d} = \path{A,B}$$

Let $\phi \in \aut(M)$.
$$\begin{array}{rcl}
\path{\phi(c), \phi(d)} & = & \phi (\path{c,d}) \\
& = & \phi(\path{A,B}) \\
& = & \path{\phi(A),\phi(B)} \\
& = & \path{A,B} \\
& = & \path{c,d}
\end{array}
$$
Therefore both $c$ and $d$ are fixed by all automorphisms of $M$. 
\end{proof}

\subsection{$\cfpo{n}$}\label{Section:CFPOn}

\begin{lemma}\label{lemma:CFPO3}
If $M$ is a connected $\cfpo{3}$ then $M$ is treelike.
\end{lemma}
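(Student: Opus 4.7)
The plan is to analyse a connected $\cfpo{3}$ $M$ by cases and reduce each case to Theorem~\ref{thm:fixedpoint}. The starting observation is that in any $\cfpo{3}$, every minimal connecting set has length at most $3$: a minimal connecting set $c_1 < c_2 > c_3 < c_4$ would, via minimality together with cycle-freeness, force the non-consecutive pairs $c_1 \parallel c_3$, $c_2 \parallel c_4$, and $c_1 \parallel c_4$ (any comparability there either shortens the set or produces a cycle), which is exactly an embedded $\alt{4}$. Consequently, for any $x, y \in M$ the unique path between them is a chain, a V-shape $x > c < y$, or a $\Lambda$-shape $x < c > y$, with the apex $c$ unique by cycle-freeness.

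Introduce the $\aut(M)$-invariant sets
\[
B^+ := \{x \in M : \exists\, a, b \in M,\ a, b > x,\ a \parallel b\},
\qquad
B^- := \{x \in M : \exists\, a, b \in M,\ a, b < x,\ a \parallel b\}
\]
of meet-positions and join-positions. Since $\alt{3}$ embeds, $B^+ \cup B^- \neq \emptyset$. If $B^+ = \emptyset$, every upper cone is a chain and no connecting set is V-shaped, so every pair of elements of $M$ has a common upper bound; reversing the order makes $M$ into a Rubin tree. Since reversing the order does not change the automorphism group as a permutation group, $\aut(M)$ coincides with the automorphism group of this reversed tree, and $M$ is treelike. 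Dually, if $B^- = \emptyset$, $M$ itself is a Rubin tree, and one can take $T = M$.

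The remaining case is $B^+, B^- \neq \emptyset$. Here the plan is to produce a fixed point and apply Theorem~\ref{thm:fixedpoint}. For $v \in B^+$ witnessed by $a, b > v$ and $w \in B^-$ witnessed by $c, d < w$, the connecting set between $v$ and $w$ has length at most $3$, and the forbidden $\alt{4}$ sharply restricts how $a, b, c, d$ can be placed relative to this connecting set. A case split on the shape of the $v$-to-$w$ path (comparable, V, or $\Lambda$) and on the positions of the witnesses reduces to a short list of configurations; in each, one either exhibits a canonically defined, hence $\aut(M)$-fixed, element of $M$ (typically the unique apex appearing on every $B^+$-to-$B^-$ connecting path, or an extremum of a canonically defined chain), or separates $M$ into two disjoint, connected, proper, $\aut(M)$-invariant subsets and invokes Lemma~\ref{FixedPointPartition} to extract fixed points.

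The main obstacle is this last case, and in particular the possibility that $B^+ \cap B^- \neq \emptyset$ --- an element with both upward and downward branching --- which blocks the na\"ive use of $B^+$ and $B^-$ themselves as the disjoint invariant sets for Lemma~\ref{FixedPointPartition}. To handle this, one refines to the sub-invariant classes of $B^+$-elements all of whose branching witnesses sit strictly above it (and dually for $B^-$), exploiting the $\alt{4}$-prohibition to show these refinements are disjoint and still capture the essential branching. Once any fixed point $r$ has been produced, Theorem~\ref{thm:fixedpoint} delivers $\aut(M) \cong_P \aut(T(M))$, hence $\aut(M) \cong_A \aut(T(M))$, so $M$ is treelike.
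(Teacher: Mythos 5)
Your preliminary reduction is sound --- paths in a $\cfpo{3}$ are chains, V's or $\Lambda$'s, and if $B^-=\emptyset$ then $M$ is already a tree (dually for $B^+$, though note that with the paper's literal definition $\alt{3}$ is the V, so $B^+\neq\emptyset$ always and that case is vacuous). The problem is that the case carrying all the weight, $B^+\neq\emptyset$ and $B^-\neq\emptyset$, rests on a false premise: such an $M$ need not have a fixed point. Take $M=\lbrace a_n, e_n, f_n, b_n \,:\, n\in\mathbb{Z}\rbrace$ where the $e_n$ form a descending chain with a pendant $a_n>e_n$ attached to each, the $f_n$ form an ascending chain with a pendant $b_n<f_n$ attached to each, and every $e_n$ lies above every $f_m$ (no other relations). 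One checks this is a connected $\cfpo{3}$ with no embedded $\alt{4}$; each $e_n$ is the meet of the incomparable pair $a_n,a_{n-1}$, so $e_n\in B^+$, and dually $f_n\in B^-$. Yet the shift $n\mapsto n+1$ on all four families is a fixed-point-free automorphism, so Theorem \ref{thm:fixedpoint} is not applicable. Your fallback, Lemma \ref{FixedPointPartition}, does not rescue this: applied to the disjoint connected invariant sets $A=\lbrace a_n,e_n\rbrace$ and $B=\lbrace b_n,f_n\rbrace$ its conclusion simply fails here ($\path{A,B}=\emptyset$ and there is no fixed point) --- that lemma's proof requires Rubin completeness, and even in the completion the fixed point it produces is the ideal cut between the $e$'s and the $f$'s, which is not a point of $M$, so you would still owe an argument for passing back from the completion. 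On top of this, the ``short list of configurations'' and the ``refined sub-invariant classes'' in your last paragraph are announced rather than exhibited, so even where the strategy could work it is a plan, not a proof.

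The paper avoids fixed points entirely: it decomposes a connected $\cfpo{3}$ globally as a tree sitting above a linear order sitting above a reversed tree, marks the reversed part with a new unary predicate $U$, and reverses the order on that part, yielding a coloured tree (after rooting, if the two halves become disconnected) with the same automorphism group. Your $B^+/B^-$ dichotomy is essentially the right way to \emph{locate} that decomposition --- the up-branching happens above the down-branching, on pain of an $\alt{4}$ --- so the salvageable version of your argument is not to hunt for a fixed point in the mixed case but to use $B^+$ and $B^-$ to define the lower, order-reversed piece and carry out the paper's flip.
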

\begin{proof}
A $\cfpo{3}$ can be split into three possibly empty sections, a tree which is above a linear order, which in turn is above a reverse ordering of a tree.  If the tree section is empty the reverse tree cannot be empty, and vice versa.

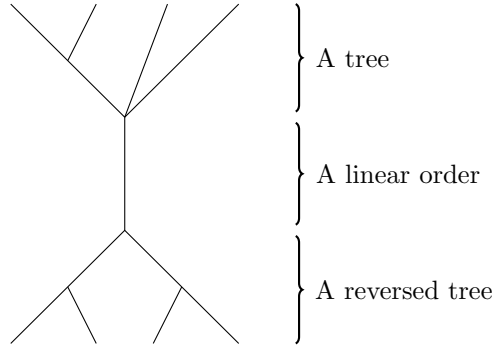
\begin{figure}[h!]
\begin{center}
\begin{tikzpicture}[scale= 0.075]
\draw (0,60) -- (20,40) -- (40,60);
\draw (15,60) -- (10,50);
\draw (20,40) -- (27.5,60);

\draw (20,40) -- (20,20);

\draw (0,0) -- (20,20) -- (40,0);
\draw (15,0) -- (10,10);
\draw (25,0) -- (30,10);

\draw[snake=brace, thick] (50,60)--(50,41);
\draw[snake=brace, thick] (50,39)--(50,21);
\draw[snake=brace, thick] (50,19)--(50,0);

\draw (60,50.5) node {A tree};
\draw (68,30) node {A linear order};
\draw (69,9.5) node {A reversed tree};

\end{tikzpicture}
\end{center}
\caption{A typical $\cfpo{3}$}
\end{figure}

By marking the reversed tree with a unary predicate and reversing its order we obtain a tree which has the same automorphism group as the $\cfpo{3}$.
\begin{figure}[h!]
\begin{center}
\begin{tikzpicture}[scale= 0.075]
\draw (0,40) -- (20,20) -- (40,40);
\draw (15,40) -- (10,30);
\draw (20,20) -- (27.5,40);

\draw (20,20) -- (20,0);

\draw[dashed] (0,20) -- (20,0) -- (40,20);
\draw[dashed] (15,20) -- (10,10);
\draw[dashed] (25,20) -- (30,10);
\end{tikzpicture}
\end{center}
\caption{A Tree with the same Automorphism Group}
\end{figure}
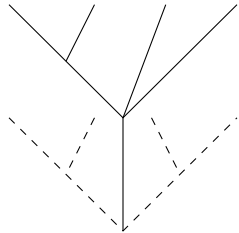
\end{proof}

\begin{theorem}\label{thm:cfpo2n+1}
If $M$ is a connected $\cfpo{2n+1}$ then $M$ is treelike.
\end{theorem}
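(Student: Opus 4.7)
The proof goes by induction on $n$, with the base case $n = 1$ supplied by Lemma \ref{lemma:CFPO3}. For the inductive step, suppose every connected $\cfpo{2k+1}$ is treelike for $k < n$, and let $M$ be a connected $\cfpo{2n+1}$ with $n \geq 2$.

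The overall strategy is to identify an $\aut(M)$-invariant central region $M_0 \subseteq M$ which is a connected $\cfpo{2n-1}$, with the complement $M \setminus M_0$ decomposing into tree caps attached above the top boundary of $M_0$ and reversed tree caps attached below the bottom boundary. The alternation count of $M_0$ should drop by exactly two: any $\alt{2n}$ embedded in $M_0$ would extend one step upward through a top cap and one step downward through a bottom cap, yielding $\alt{2n+2}$ in $M$ and contradicting the $\cfpo{2n+1}$ hypothesis. Making this decomposition precise is the main structural work; roughly one wants $M_0$ to be the set of points $x$ for which both the upper cone $\lbrace y \in M : y \geq x \rbrace$ and the lower cone $\lbrace y \in M : y \leq x \rbrace$ exhibit nontrivial alternating behaviour (so that removing $M_0$'s complement leaves pure upper and lower trees, exactly as in the $\cfpo{3}$ decomposition of Lemma \ref{lemma:CFPO3}). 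Connectedness of $M_0$ and its invariance under $\aut(M)$ should follow because the defining property is expressible in the language of the CFPO. By the inductive hypothesis, $M_0$ is treelike via some tree $T_0$.

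To build $T$, I would mimic the construction of Lemma \ref{lemma:CFPO3} at both ends simultaneously: reattach the top caps in their original orientation above $T_0$, and the bottom caps in reversed orientation below $T_0$, using a new colour predicate to record the reversed portions. The main obstacle is verifying $\aut(T) \cong_A \aut(M)$: one must show that the action of $\aut(M)$ restricted to $M_0$ lifts sufficiently uniquely to describe the cap permutations, dually on the $T$ side, and that these two actions correspond under the abstract isomorphism $\aut(M_0) \cong_A \aut(T_0)$ given by induction. I expect this step to require Proposition \ref{Lemma:NeededInChapter3} to align 2-orbits across the boundary between $M_0$ and the caps, combined with gluing arguments in the spirit of Theorem \ref{thm:fixedpoint}.
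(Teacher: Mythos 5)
Your proposal diverges from the paper's argument (which uses no induction on $n$: it directly takes $C$ to be the set of images of the midpoints of all embedded copies of $\alt{2n+1}$ and $\alt{2n+1}^*$, proves in one step that $C$ is an $\aut(M)$-invariant connected $\cfpo{3}$, and then treats each branch $B(x)=\lbrace y : \path{x,y}\cap C=\lbrace x\rbrace\rbrace$ as a connected CFPO with fixed point $x$, handled by Theorem \ref{thm:fixedpoint}), and the divergence hides two genuine gaps. First, the structural claim that $M\setminus M_0$ consists of tree caps sitting above the top boundary of $M_0$ and reversed tree caps below its bottom boundary is false for $n\geq 2$. Take $M=\lbrace a_0,\ldots,a_4,b\rbrace$ with $a_0>a_1<a_2>a_3<a_4$ and $b<a_1$: this is a connected $\cfpo{5}$, any candidate $\cfpo{3}$ core is something like $\lbrace b,a_1,a_2,a_3\rbrace$, and the stripped points $a_0$ and $a_4$ attach \emph{above} $a_1$ and $a_3$, which lie in the reversed-tree (bottom) part of the core. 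The outer layers of a $\cfpo{2n+1}$ attach throughout the zigzag, not at a global top and bottom, so a single ``reverse the bottom caps'' predicate cannot record the orientations correctly; one needs the parity bookkeeping along every branch that Definition \ref{Def:XandY} performs with the alternating sets $X_i,Y_i$ and the predicate $U$. Relatedly, the branches hanging off the core need not be trees or reversed trees at all (in the example above the branch at $a_2$ is all of $M$, containing $\alt{5}$), which is precisely why the paper invokes the full fixed-point construction rather than a cap-reattachment.

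Second, your inductive hypothesis delivers only $\aut(M_0)\cong_A\aut(T_0)$, an abstract group isomorphism. That is not enough to carry out the reattachment step: to glue a cap attached at $x\in M_0$ onto the correct point of $T_0$ you need to know which point of $T_0$ corresponds to $x$, i.e.\ you need an isomorphism of permutation groups on a common domain, not of abstract groups. The paper's constructions (Theorem \ref{thm:fixedpoint}, Lemma \ref{lemma:CFPO3}, and the present theorem) all produce $\cong_P$ on the same underlying set for exactly this reason. You flag this difficulty but do not resolve it, and as stated the induction does not close; to repair it you would have to strengthen the induction statement to a permutational one, at which point the argument essentially reorganises itself into the paper's single-step proof via $C$ and the branches $B(x)$.
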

\begin{proof}
Our strategy is to find a subset of $M$ which is a $\cfpo{3}$ and is fixed setwise by $\aut(M)$, and add cones to the tree corresponding to this $\cfpo{3}$ to obtain a tree with the same automorphism group as $M$.

We consider the $\phi(a_n)$ and $\phi(a_n^*)$, the images in $M$ of the midpoints of $\mathrm{Alt}_{2n+1}$ and $\mathrm{Alt}_{2n+1}^*$ under all possible embeddings $\phi$.  Let $C$ be the set of all such $\phi(a_n)$ and $\phi(a_n*)$.  This is the candidate for the $\cfpo{3}$ we require for our strategy, but first we must show that it is indeed a $\cfpo{3}$, and that it is fixed setwise by $\aut(M)$.

Suppose that $C$ contains an antichain ${x_n,y_n}$.  Since $M$ is connected there must be a path between $x_n$ and $y_n$.  We also pick particular copies of either $\mathrm{Alt}_{2n+1}$ or $\mathrm{Alt}_{2n+1}^*$ that contain $x_n$ and $y_n$, and label the points using $x_i$ and $y_i$ appropriately.  $X$ is the set $\lbrace x_i \rbrace$, while $Y= \lbrace y_i \rbrace$.

To show that the maximum length of a path though $C$ is 3 we consider how the ends of $\path{x_n,y_n}$ interact with $X$ and $Y$.
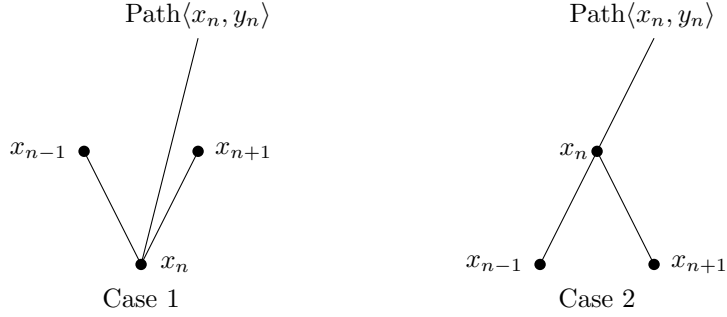
\begin{figure}[h]
\begin{center}
\begin{tikzpicture}[scale=0.15]
\draw (5,10) -- (10,0) -- (15,10);
\draw (10,0) -- (15,20);

\draw (45,0) -- (50,10) -- (55,0);
\draw (50,10) -- (55,20);

\fill (5,10) circle (0.5);
\fill (10,0) circle (0.5);
\fill (15,10) circle (0.5);
\fill (45,0) circle (0.5);
\fill (50,10) circle (0.5);
\fill (55,0) circle (0.5);

\draw (1,10) node {$x_{n-1}$};
\draw (13,0) node {$x_{n}$};
\draw (19,10) node {$x_{n+1}$};
\draw (15,22) node {$\path{x_n,y_n}$};

\draw (41,0) node {$x_{n-1}$};
\draw (48,10) node {$x_{n}$};
\draw (59,0) node {$x_{n+1}$};
\draw (54,22) node {$\path{x_n,y_n}$};

\draw (10,-3) node {Case 1};
\draw (50,-3) node {Case 2};
\end{tikzpicture}
\end{center}
\caption{Interactions between $X$ and $Y$}
\end{figure}

The cases where $x_n$ is an upper point of $\path{x_n,y_n}$ are reverse orderings of Cases 1 and 2, so will not be done explicitly.  Also there is nothing special in our choice of $X$, so these arguments also apply to $Y$.
\paragraph*{Case 1}
In this case $x_n$ is a lower point of both $X$ and $\path{x_n,y_n}$.

If $[x_n,x_{n+1}] \cap \path{x_n,y_n} \not= \emptyset$ then $[x_n,x_{n-1}] \cap \path{x_n,y_n} = \emptyset$, otherwise $x_{n-1}$ and $x_{n+1}$ would be related.  So the union of at least one of $\lbrace x_0, \ldots x_{n-1} \rbrace$ or $\lbrace x_{n+1}, \ldots x_{2n} \rbrace$ with $ \path{x_n,y_n}$ is a copy of a finite section of Alt.

\paragraph*{Case 2}
In this case $x_n$ is an upper point of $X$ but a lower point of $\path{x_n,y_n}$.  As both $x_{n-1}$ and $x_{n+1}$ lie below $x_n$ the two paths $\path{x_{n-1},y_n}$ and \linebreak $\path{x_{n+1},y_n}$ both contain and have the same length as $\path{x_n,y_n}$.  We also know that $x_{n-2}$ cannot be contained in $\path{x_{n-1},y_n}$, as this would require $x_{n-2}$ and $x_n$ to be related.  Similarly $x_{n+2}$ cannot be contained in $\path{x_{n+1},y_n}$.  Thus we see that both $\lbrace x_0, \ldots x_{n-2} \rbrace \cup \path{x_{n-1},y_n}$ and $\lbrace x_{2n}, \ldots x_{n+2} \rbrace \cup \path{x_{n+1},y_n}$ are copies of a finite section of Alt.

Thus in both cases, at least one of $\lbrace x_0, \ldots x_{n-1} \rbrace$ or $\lbrace x_{n+1}, \ldots x_{2n} \rbrace$ with $ \path{x_n,y_n}$ is a copy of a finite section of Alt.  $M$ is a cycle free partial order so, assuming that the configurations of $X$, $Y$ and $\path{x_n,y_n}$ result in the shortest possible finite alternating chain,
$$P := \lbrace x_0, \ldots, x_{n-2} \rbrace \cup \path{x_{n-1},y_{n+1}} \cup \lbrace y_{n+2}, \ldots y_{2n} \rbrace$$
is a copy of a finite section of Alt.  The length of $P$ is $$2n - 2 + \mid \path{x_{n-1},y_{n+1}} \mid$$
By assumption $M$ is a $\cfpo{2n+1}$, so $P$ has at most $2n+1$ elements, thus $\mid \path{x_{n},y_{n}} \mid \; \leq 3$ and $C$ is a $\cfpo{3}$.

To see that $C$ is fixed setwise by automorphisms, simply note for any $x \in C$ and $\phi \in \aut(M)$, the image of the copy of $\alt{2n+1}$ that witnesses the fact that $x \in C$ will witness $\phi(x) \in C$.

We now have the $\cfpo{3}$ our strategy demands, so now we focus on how we may adjoin cones to it to obtain a tree with the same automorphism group as $M$.

For each $x \in C$, we define $B(x):= \lbrace y \in M \: : \: \path{x,y} \cap C = \lbrace x \rbrace \rbrace$.  If we introduce a predicate that fixes $x$ to $B(x)$, then we are able to apply the construction in Definition \ref{Def:XandY} to $B(x)$ using $x$ as the root to obtain $T(B(x))$.  We also know that if there is an automorphism of $M$ that maps $x_0$ to $x_1$ then $B(x_0) \cong B(x_1)$.

For each isomorphism type of $B(x)$, we add a colour predicate $P_x$ to $\langle C, \leq \rangle $ such that $C \models P_x(y)$ if and only if $B(y) \cong B(x)$.  We obtain $\langle C, \leq_M, \lbrace P_x \rbrace \rangle$, a $\cfpo{3}$ such that:
$$\aut(\langle C, \leq_M, \lbrace P_x \rbrace \rangle) \cong_P \lbrace g \in \aut(C) \, : \, \exists h \in \aut(M) \, h |_C = g \rbrace$$
Lemma \ref{lemma:CFPO3} shows that there is a tree, which we call $T(C)$ such that
$$\aut(T(C)) \cong \lbrace g \in \aut(C) \, : \, \exists h \in \aut(M) \, h |_C = g \rbrace$$

We define $T$ to be the structure whose domain is
$$T_C \cup \bigcup_{x \in C} T(B(x))$$
under the equivalence relation that identifies the root of $T_{B(x)}$ with the point of $T_C$ that corresponds with $x$.  We give $T$ the transitive closure of the order inherited from $T_C$ and all the $T_{B(x)}$.  This structure is clearly a tree with the automorphism group of $M$.

Note that this method not only gives a tree $T$ such that $\aut(M) \cong_A \aut(T)$, but also a tree $T$ such that $\aut(T) \cong_P \aut(M)$.
\end{proof}

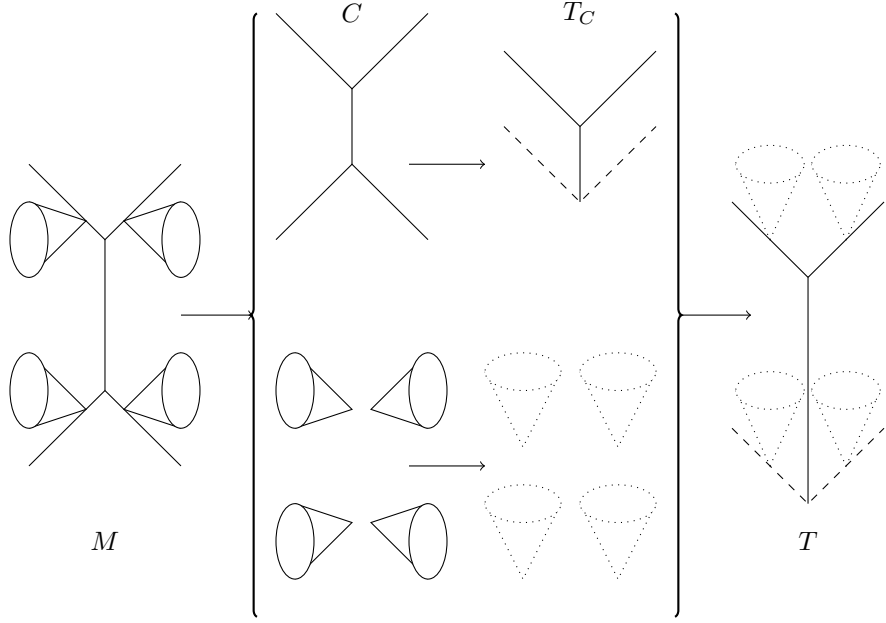
\begin{figure}[ht]
\begin{center}
\begin{tikzpicture}[scale=0.05]
\draw (-10,40) -- (10,20) -- (30,40);
\draw (10,20) -- (10,-20);
\draw (-10,-40) -- (10,-20) -- (30,-40);

\draw (-10,30) -- (5,25) -- (-10,10);
\fill[white] (-10,20) ellipse (5 and 10);
\draw (-10,20) ellipse (5 and 10);

\draw (30,30) -- (15,25) -- (30,10);
\fill[white] (30,20) ellipse (5 and 10);
\draw (30,20) ellipse (5 and 10);

\draw (-10,-30) -- (5,-25) -- (-10,-10);
\fill[white] (-10,-20) ellipse (5 and 10);
\draw (-10,-20) ellipse (5 and 10);

\draw (30,-30) -- (15,-25) -- (30,-10);
\fill[white] (30,-20) ellipse (5 and 10);
\draw (30,-20) ellipse (5 and -10);

\draw[->] (30,0) -- (49,0);
\draw[snake=brace, thick] (50,-80)--(50,80);

\draw (55,80) -- (75,60) -- (95,80);
\draw (75,60) -- (75,40);
\draw (55,20) -- (75,40) -- (95,20);

\draw[->] (90,40) -- (110,40);

\draw (115,70) -- (135,50) -- (155,70);
\draw (135,50) -- (135,30);
\draw[dashed] (115,50) -- (135,30) -- (155,50);

\draw (60,-30) -- (75,-25) -- (60,-10);
\fill[white] (60,-20) ellipse (5 and 10);
\draw (60,-20) ellipse (5 and 10);

\draw (95,-30) -- (80,-25) -- (95,-10);
\fill[white] (95,-20) ellipse (5 and 10);
\draw (95,-20) ellipse (5 and 10);

\draw (60,-50) -- (75,-55) -- (60,-70);
\fill[white] (60,-60) ellipse (5 and 10);
\draw (60,-60) ellipse (5 and 10);

\draw (95,-50) -- (80,-55) -- (95,-70);
\fill[white] (95,-60) ellipse (5 and 10);
\draw (95,-60) ellipse (5 and 10);

\draw[->] (90,-40) -- (110,-40);

\draw[dotted] (110,-15) -- (120,-35) -- (130,-15);
\fill[white] (120,-15) ellipse (10 and 5);
\draw[dotted]  (120,-15) ellipse (10 and 5);

\draw[dotted]  (135,-15) -- (145,-35) -- (155,-15);
\fill[white] (145,-15) ellipse (10 and 5);
\draw[dotted]  (145,-15) ellipse (10 and 5);

\draw[dotted]  (110,-50) -- (120,-70) -- (130,-50);
\fill[white] (120,-50) ellipse (10 and 5);
\draw[dotted]  (120,-50) ellipse (10 and 5);

\draw[dotted]  (135,-50) -- (145,-70) -- (155,-50);
\fill[white] (145,-50) ellipse (10 and 5);
\draw[dotted]  (145,-50) ellipse (10 and 5);

\draw[->] (161,0) -- (180,0);
\draw[snake=brace, thick] (160,80)--(160,-80);

\draw[dashed] (175,-30) -- (195,-50) -- (215,-30);
\draw (195,-50) -- (195,10);
\draw (175,30) -- (195,10) -- (215,30);

\draw[dotted] (176,-20) -- (185,-40) -- (194,-20);
\fill[white] (185,-20) ellipse (9 and 5);
\draw[dotted]  (185,-20) ellipse (9 and 5);

\draw[dotted] (196,-20) -- (205,-40) -- (214,-20);
\fill[white] (205,-20) ellipse (9 and 5);
\draw[dotted]  (205,-20) ellipse (9 and 5);

\draw[dotted] (176,40) -- (185,20) -- (194,40);
\fill[white] (185,40) ellipse (9 and 5);
\draw[dotted]  (185,40) ellipse (9 and 5);

\draw[dotted] (196,40) -- (205,20) -- (214,40);
\fill[white] (205,40) ellipse (9 and 5);
\draw[dotted]  (205,40) ellipse (9 and 5);

\draw (10,-60) node {$M$};
\draw (75,80) node {$C$};
\draw (135,80) node {$T_C$};
\draw (195,-60) node {$T$};
\end{tikzpicture}
\end{center}
\caption{Turning a $\cfpo{2n+1}$ into a Tree}
\end{figure}

\begin{cor}\label{cor:cfpo2n}
If $M$ is a connected $\cfpo{2n}$ then $M$ is treelike.
\end{cor}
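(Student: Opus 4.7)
The plan is to mimic the proof of Theorem \ref{thm:cfpo2n+1}: find a subset $C \subseteq M$ which is fixed setwise by $\aut(M)$ and is itself a connected $\cfpo{3}$, then apply Lemma \ref{lemma:CFPO3} to build a tree for $C$ and attach the cones $B(x) := \lbrace y \in M \: : \: \path{x,y} \cap C = \lbrace x \rbrace \rbrace$ via the fixed-point construction of Theorem \ref{thm:fixedpoint}. The only genuinely new work is in choosing and analysing $C$; everything downstream will be essentially a verbatim repetition of the cone-gluing argument that already appears at the end of the proof of Theorem \ref{thm:cfpo2n+1}.

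For the choice of $C$, the obstacle is that $\alt{2n}$ has no single midpoint, only a middle edge $\lbrace a_{n-1}, a_n \rbrace$. I would therefore define
$$C := \lbrace \phi(a_{n-1}), \phi(a_n) \: : \: \phi \text{ is an embedding of } \alt{2n} \text{ or } \alt{2n}^* \text{ into } M \rbrace.$$
That $C$ is preserved setwise by $\aut(M)$ is immediate: automorphisms send $\alt{2n}$-embeddings to $\alt{2n}$-embeddings and preserve the ``middle-pair'' distinction.

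The main technical step, and the main obstacle, is verifying that $C$ is a $\cfpo{3}$. I would argue contrapositively: suppose $C$ contained an embedded $\alt{4}$, say $c_0, c_1, c_2, c_3$ in an alternating configuration. Each $c_i$ is a middle element of some $\alt{2n}$ (or $\alt{2n}^*$) embedding $\phi_i$ in $M$. Splicing the outer halves of $\phi_0$ and $\phi_3$ (the $n-1$ points of each embedding lying on the side opposite to the interior of the $\alt{4}$) together with the path through $c_0, c_1, c_2, c_3$ should produce an alternating chain in $M$ of length at least $2(n-1) + 4 = 2n+2$, contradicting the hypothesis that $\alt{2n+1}$ does not embed in $M$. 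Executing this cleanly requires a case split analogous to the ``Case 1/Case 2'' dichotomy of Theorem \ref{thm:cfpo2n+1}, but now multiplied by the extra choices of (i) which of $a_{n-1},a_n$ each $c_i$ realises in its embedding, and (ii) whether $\phi_i$ is of $\alt{2n}$ or $\alt{2n}^*$; in each subcase one checks that enough of the outer parts of $\phi_0$ and $\phi_3$ can be retained without collapsing into the connecting path (using cycle-freeness of $M$) to exceed the $\cfpo{2n}$ bound.

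Once $C$ is a connected $\cfpo{3}$ fixed setwise by $\aut(M)$, the remainder of the argument is exactly as in Theorem \ref{thm:cfpo2n+1}. For each $x \in C$ give $B(x)$ a fresh colour distinguishing $x$ so that $x$ becomes a fixed point of $B(x)$, and form the tree $T(B(x))$ via Definition \ref{Def:XandY}. Colour $C$ by predicates $P_x$ recording the isomorphism type of $B(x)$, use Lemma \ref{lemma:CFPO3} to obtain a tree $T_C$ for $\langle C, \leq_M, \lbrace P_x \rbrace\rangle$, and glue by identifying the root of each $T(B(x))$ with the point of $T_C$ corresponding to $x$. The resulting $T$ is a tree, and combining Lemmas \ref{lemma:T(M)andisomorphism}, \ref{lemma:T(M)andsubstructure}, Theorem \ref{thm:fixedpoint} and Lemma \ref{lemma:CFPO3} yields $\aut(T) \cong_P \aut(M)$, so $M$ is treelike.
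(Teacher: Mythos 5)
Your overall architecture (find a setwise-invariant $\cfpo{3}$ of ``middle'' images, then glue on cones) is a reasonable imitation of Theorem \ref{thm:cfpo2n+1}, but the step you yourself flag as the main technical obstacle is exactly where the argument breaks, and the counting you propose does not close. The paper avoids the issue entirely: it adjoins to $M$ a new point, marked by a fresh unary predicate, below every element of the orbit of a point $e$ which is the image of the endpoint $a_0$ under an embedding of $\alt{2n}$. Since these points are attached uniformly across an orbit, the resulting structure has the same automorphism group as $M$, and it is a $\cfpo{2n+1}$, so Theorem \ref{thm:cfpo2n+1} applies verbatim. That two-line reduction is the intended proof; no even-length analogue of the midpoint analysis is needed.

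The concrete gap in your version is the estimate $2(n-1)+4=2n+2$. It presumes that all $n-1$ points of the outer half of $\phi_0$ (and of $\phi_3$) survive as turning points of the spliced alternating chain. But $a_{n-1}$ and $a_n$ sit asymmetrically in $\alt{2n}$ --- one side of $c_i$ has $n-1$ points and the other has $n$ --- and you have no control over which side faces away from the interior of the putative $\alt{4}$ in $C$. Worse, exactly as in Case 2 of the proof of Theorem \ref{thm:cfpo2n+1}, the outer point adjacent to $c_0$ can be absorbed into the first segment of the connecting path, costing one further alternation at each end. In the worst case (short side outward and a Case-2 collapse at both $c_0$ and $c_3$) the spliced chain has only $(n-2)+4+(n-2)=2n$ alternations, which is consistent with $M$ being a $\cfpo{2n}$ and yields no contradiction. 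So the claim that $C$ is a $\cfpo{3}$ is not established; rescuing it would require exploiting that both elements of each middle pair lie in $C$, or a more careful choice of witnessing embeddings --- substantially more work than the reduction the paper actually uses.
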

\begin{proof}
Let $e \in M$ be an image of $a_0 \in \alt{2n}$ (if $\alt{2n}$ does not embed into $M$ we may consider $M^*$ instead).  Below every point in $\mathrm{Or}(e)$ we adjoin a new point, coloured with a new unary predicate.  This new structure is a $\cfpo{2n+1}$ with the same automorphism group as $M$, so $M$ shares its abstract automorphism group with a tree. 
\end{proof}

While we have found a tree $T$ such that $\aut(M) \cong_A \aut(T)$, and thereby proved the corollary, we may do better than that.  We can delete the points we added to $M$ from $T$ without introducing new automorphisms (as we added these points to every point in an orbit of $M$), getting a $T^*$ such that $\aut(M) \cong_P \aut(T^*)$.

\subsection{$\cfpo{\omega}$}

\begin{theorem}\label{thm:cfpoomega}
If $M$ is a connected $\cfpo{\omega}$ then $M$ is tree-like.
\end{theorem}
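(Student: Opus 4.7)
The plan is to emulate the strategy of Theorem \ref{thm:cfpo2n+1}: identify a setwise $\aut(M)$-invariant substructure $C \subseteq M$ of bounded alternation, apply the fixed-point construction to each cone hanging off $C$, and glue the results onto a tree built from $C$. Define $C \subseteq M$ to be the collection of points $x$ which occur as the base point $\phi(a_0)$ of some embedding $\phi$ of $\alt{\omega}$ or $\alt{\omega}^*$ into $M$. Since $\alt{\omega}$ embeds, $C$ is nonempty; since $\aut(M)$ carries embeddings to embeddings, $C$ is preserved setwise by $\aut(M)$.

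The crucial step is to bound the length of alternating chains in $C$ uniformly, so that $C$ is a $\cfpo{n}$ for some finite $n$. Suppose $c_0, c_1, \ldots, c_m$ were a sufficiently long alternating chain in $C$: each $c_i$ carries an infinite one-sided alternating chain in $M$ oriented according to whether $c_i$ serves as a max-type base point (witnessing $\alt{\omega}$) or a min-type base point (witnessing $\alt{\omega}^*$). Choosing the infinite extensions at $c_0$ and $c_m$ compatibly with the chain's orientation and using the cycle-free property of $M$ to verify the required incomparabilities and to keep the extensions disjoint from $c_0, \ldots, c_m$ and from each other, one concatenates to embed $\alt{}$ into $M$, contradicting the $\cfpo{\omega}$ hypothesis.

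Once $C$ is known to be a $\cfpo{n}$ fixed setwise by $\aut(M)$, follow the assembly at the end of Theorem \ref{thm:cfpo2n+1}: for each $x \in C$, set $B(x) := \lbrace y \in M \: : \: \path{x,y} \cap C = \lbrace x \rbrace \rbrace$. By the definition of $C$, any embedding of $\alt{\omega}$ or $\alt{\omega}^*$ into $B(x)$ must have its base at $x$ (otherwise that base would lie in $C \cap B(x) \setminus \lbrace x \rbrace$); after marking $x$ with a distinguishing colour in $B(x)$, the point $x$ becomes fixed by $\aut(B(x))$, and Theorem \ref{thm:fixedpoint} yields a tree $T(B(x))$ with $\aut(B(x)) \cong_P \aut(T(B(x)))$. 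Enriching $C$ with unary predicates distinguishing the isomorphism types of the $B(x)$'s and applying Theorem \ref{thm:cfpo2n+1} produces a tree $T_C$ carrying $\lbrace g \in \aut(C) \: : \: \exists h \in \aut(M) \; h|_C = g \rbrace$. Glueing each $T(B(x))$ to $T_C$ at the point representing $x$ yields the required tree $T$ with $\aut(M) \cong_A \aut(T)$ (indeed $\cong_P$).

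The main obstacle is the alternation bound for $C$. The geometric picture (combining a finite alternating chain with infinite extensions at its endpoints to obtain $\alt{}$) is persuasive, but the bookkeeping is delicate: a given $c_i \in C$ may be the base of many incompatible $\alt{\omega}$-embeddings pointing in different directions, and one must argue that a single choice of infinite extensions at $c_0$ and $c_m$ can be made that is compatible with the fixed chain and does not re-enter it. Cycle-freeness guarantees unique paths, which simplifies the analysis, but verifying that the infinite tails stay clear of $c_0, \ldots, c_m$ (and of one another) is where the real work lies.
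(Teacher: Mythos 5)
There is a genuine gap, and it is located exactly where you suspected: the claim that $C$ is a $\cfpo{n}$ for some finite $n$ is false, and no amount of careful bookkeeping will repair the concatenation argument. Take $M = \alt{\omega}$ itself, with points $a_0 > a_1 < a_2 > \cdots$. Every $a_{2i}$ is the base point of an embedding of $\alt{\omega}$ (namely $a_j \mapsto a_{2i+j}$) and every $a_{2i+1}$ is the base point of an embedding of $\alt{\omega}^*$, so your $C$ is all of $M$ and contains arbitrarily long finite alternating chains. The concatenation step breaks down because the infinite witness at an endpoint of the finite chain can be forced to run back along the chain itself: at $c_0 = a_0$ the only element comparable to $a_0$ is $a_1$, so the only infinite alternating chain based at $a_0$ is the one that immediately re-enters $c_0,\ldots,c_m$, and there is no extension ``compatible with the chain's orientation'' at all. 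This is not a pathology of the example; in any $\cfpo{\omega}$ every $\alt{n}$ embeds, so an $\aut(M)$-invariant set rich enough to carry all the turning points cannot have bounded alternation, and the reduction to Theorem \ref{thm:cfpo2n+1} is unavailable in principle.

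The paper avoids this by replacing your single invariant core with an $\omega$-indexed stratification. It takes $M_0$ to be the set of \emph{first elements of maximal copies} of $\alt{\omega}$ or $\alt{\omega}^*$ (first checking, via the non-embeddability of $\alt{}$, that every copy sits inside a maximal one and that a point cannot simultaneously head a maximal copy of each kind), observes that each $\sim_C$-class of $M_0$ is a tree or a reversed tree, deletes $M_0$, and repeats. Each layer contributes a disjoint union of (possibly reversed) trees marked by $U$, and the layers are stacked so that level $n$ sits below level $n-1$; the union is a tree whose maximal chains, after collapsing the monochromatic convex pieces, have type $\omega^*$, which is what forces automorphisms of the tree to respect the stratification. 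Your final assembly step (colouring cones by isomorphism type and gluing) does reappear in spirit in the paper's argument, but only after this layer-by-layer peeling, not after a one-shot reduction to a bounded core.
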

\begin{proof}
This proof works in a similar fashion to the proofs of Theorem \ref{thm:fixedpoint}, Lemma \ref{lemma:CFPO3} and Theorem \ref{thm:cfpo2n+1}; by altering the order on the CFPO we produce a tree, while maintaining the automorphism group.  Let $M$ be a Rubin-complete CFPO.

We say that $A \subseteq M$ is a maximal copy of either $\alt{\omega}$ or $\alt{\omega}^*$ if
\begin{itemize}
\item $A$ is the image of $\alt{\omega}$ (or $\alt{\omega}^*$ respectively).
\item There is no image of $\alt{\omega}$ or $\alt{\omega}^*$ that properly contains $A$.
\end{itemize}

Every copy of $\alt{\omega}$ is contained in a maximal copy of either $\alt{\omega}$ or $\alt{\omega}^*$.  To see this, let $\lbrace A_n \subseteq M \: : \: n \in \omega \rbrace$ be such that each $A_n$ is isomorphic to either $\alt{\omega}$ or $\alt{\omega}^*$ and if $n < m$ then $A_n \subsetneq A_m$.  This means that
$$\bigcup_{n \in \mathbb{N}} (A_n \setminus A_0) \cong \alt{\omega} \:\textnormal{or} \: \alt{\omega}^* $$
and therefore
$$A_0 \cup \bigcup_{n \in \mathbb{N}} (A_n \setminus A_0) \cong \alt{}$$

We now describe a procedure that transforms $M$ into a tree while preserving its automorphism group.  Again, we add a unary predicate $U$ to remind us when we've changed direction.
\begin{enumerate}
\item Let $M_0$ be the following set:
$$\lbrace x \in M  :x \; \textnormal{is the first element of a maximal copy of either} \, \alt{\omega} \; \textrm{or} \; \alt{\omega}^* \rbrace$$

If $x \in M_0$ is witnessed by a maximal copy of $\alt{\omega}$ then $x \in M_0$ cannot be witnessed by a maximal copy of $\alt{\omega}^*$.  To see this, let $\lbrace x, a_1, \ldots \rbrace$ be a maximal copy of $\alt{\omega}$ and let $\lbrace x, b_1, \ldots \rbrace$ be a maximal copy of $\alt{\omega}^*$.

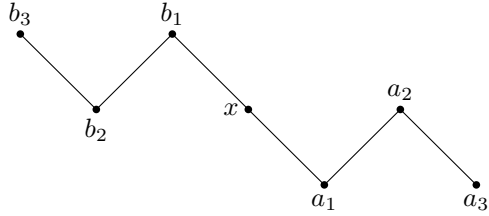
\begin{figure}[ht]
\begin{center}
\begin{tikzpicture}[scale=0.1]
\draw (0,10) -- (10,0) -- (20,10) -- (40,-10) -- (50,0) -- (60,-10) ;
\fill (   0, 10) circle (0.5);
\fill ( 10,   0) circle (0.5);
\fill ( 20, 10) circle (0.5);
\fill ( 30,   0) circle (0.5);
\fill ( 40,-10) circle (0.5);
\fill ( 50,   0) circle (0.5);
\fill ( 60,-10) circle (0.5);

\draw[anchor=south] (   0, 10) node {$b_3$};
\draw[anchor=north] ( 10,   0) node {$b_2$};
\draw[anchor=south] ( 20, 10) node {$b_1$};
\draw[anchor=east] ( 30,   0) node {$x$};
\draw[anchor=north] ( 40,-10) node {$a_1$};
\draw[anchor=south] ( 50,   0) node {$a_2$};
\draw[anchor=north] ( 60,-10) node {$a_3$};
\end{tikzpicture}
\end{center}
\caption{Witnessing $x \in M_0$}
\end{figure}

$b_1 > a_1$, but $b_2 || a_1$, as $b_2 || x$, so $\lbrace b_3, b_2, b_1, a_1, \ldots \rbrace$ is a copy of $\alt{\omega}$, contradicting the assumption that $\lbrace x, a_1, \ldots \rbrace$ was a maximal copy of $\alt{\omega}$.

Let $\sim_C$ be the relation on $M_0$ given by
$$x \sim_C y \Leftrightarrow \left(
\begin{array}{c}
\lbrace x, a_1, \ldots \rbrace \:\textnormal{witnesses}\: x \in M_0  \\
\textnormal{if and only if} \\
\lbrace y, a_1, \ldots \rbrace \:\textnormal{witnesses}\: y \in M_0.
\end{array}\right)$$

That $\sim_C$ is an equivalence relation is readily apparent.  We denote the $\sim_C$-equivalence classes as $C^0_i$.

Let $x \in M_0$, and let this be witnessed by $\lbrace x, a_1, \ldots \rbrace$, a copy of $\alt{\omega}$.  For every $y \in [x]_{\sim_C}$, we know that $y > a_1$, and thus $[x]_{\sim_C} \cup a_1$ is a tree.  Similarly, if $x \in M_0$ is witnessed by a copy of $\alt{\omega}^*$ then $[x]_{\sim_C}$ is a reverse ordered tree.

Let $\lbrace C^0_i \rbrace$ be the set of $\sim_C$-equivalence classes of $M_0$.
\item Assume we have defined $M_{n-1}$ and the $C^{n-1}_i$s. We define $M_n$ to be:
$$\left\lbrace x \in M \setminus \bigcup\limits_{i <n} M_{i} \; : \; 
\begin{array}{c}
x \; \textnormal{is the first element of}\: A \:  \textnormal{which is a maximal}\\
\textnormal{copy of either} \, \alt{\omega} \; \textrm{or} \; \alt{\omega}^*  \textnormal{in} \left( M \setminus \bigcup\limits_{i <n} M_{i} \right)
\end{array}
\right\rbrace$$
Again, $M_n$ is a disjoint union of trees and reverse ordered trees, which we call $C^{n}_i$.
\end{enumerate}
If $C^n_i$ is a tree then $T(C^n_i):= \langle  C^n_i, \leq, U \rangle$ where $U$ is realised nowhere, and if $C^n_i$ is a reverse ordered tree then $T(C^n_i):= \langle (C^n_i)^*, \leq, U \rangle$ where $U$ is realised everywhere.

We define $T_0$ to be the disjoint union of $\lbrace T(C^0_i) \rbrace$ with no new relations added to the ordering.  If we have already defined $T_{n-1}$ then
$$T_n := T_{n-1} \cup \bigcup \lbrace T(C^n_i) \rbrace$$
We add to the order inherited from $T_{n-1}$ and $T(C^n_i)$ pairs of the form $(x,y)$ where
$$x \in T(C^n_i) \: \textnormal{for some} \: i$$
and $y$ is in $T(C^{n-1}_j)$, where $C^{n-1}_j$ is a cone of $x$.  We then take the transitive closure to obtain an ordering.

Put $T(M) := \bigcup\limits_{n \in \mathbb{N}} T_n$.  Since the $M_n$ partition $M$ and since at each stage we place trees above elements of trees, $T(M)$ is a tree.

If $\aut(M)$ does not preserve the $M_n$ then we would have a map that sends a maximal copy of $\alt{\omega}$ or $\alt{\omega}^*$ to a non-maximal copy.  $T(M)$ realises $U$ in monochromatic convex subsets.  In the tree obtained by collapsing each of those subsets to a singleton, every maximal chain is isomorphic to $\omega^*$, so $T(M)$ preserves the $M_n$ set-wise too.

Since each $T(C^n_i)$ is monochromatic, and is order-isomorphic to either $C^n_i$ or $(C^n_i)^*$, if $\aut(T(M)) \not= \aut(M)$ then we must either:

\begin{enumerate}
\item be unable send a $T(C^n_i)$ to a $T(C^n_j)$ where we can map $C^n_i$ to a $C^n_j$; or
\item be able to send $T(C^n_i)$ to $T(C^n_j)$ where we cannot map $C^n_i$ to $C^n_j$.
\end{enumerate}

If $T(C^n_i) \cong T(C^n_j)$ but we cannot map one to the other using an automorphism of $T(M)$ then we must eventually attach $T(C^n_i)$ to something different to what we attach $T(C^n_j)$ to, but then $C^n_i$ emanates from a point that is different to the point that $C^n_i$ emanates from, and we cannot map $C^n_i$ to $C^n_i$.

If we do this argument in reverse we obtain point 2.

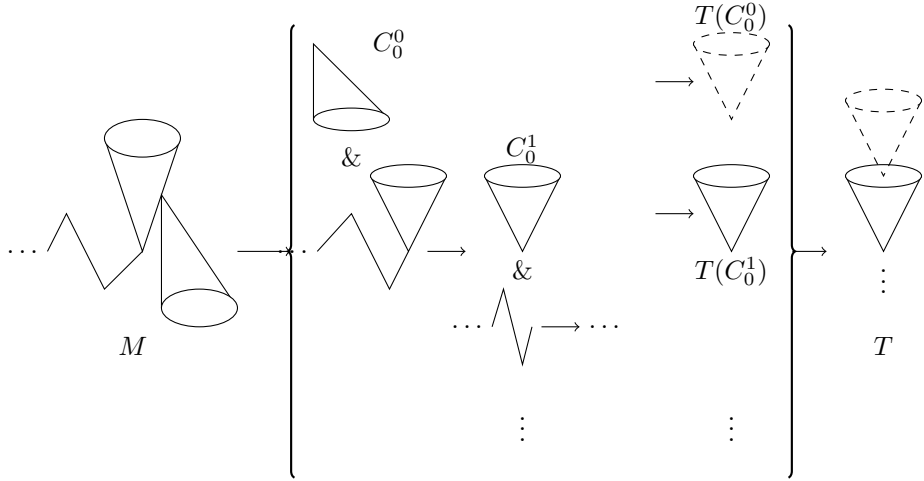
\begin{figure}[ht]
\begin{center}
\begin{tikzpicture}[scale=0.05]
\draw (0,30) -- (10,0) -- (20,30);
\fill[white] (10,30) ellipse (10 and 5);
\draw (10,30) ellipse (10 and 5);

\draw (15,-15) -- (15,15) -- (35,-15);
\fill[white] (25,-15) ellipse (10 and 5);
\draw (25,-15) ellipse (10 and 5);

\draw (10,0) -- (0,-10) -- (-10,10) -- (-15,0);
\draw[anchor=east] (-15,0) node {$\ldots$};

\draw[->] (35,0) -- (49,0);
\draw[snake=brace, thick] (50,-60)--(50,60);

\draw (55,35) -- (55,55) -- (75,35);
\fill[white] (65,35) ellipse (10 and 3);
\draw (65,35) ellipse (10 and 3);

\draw (65,25) node {$\&$};

\draw[anchor=east] (56,0) node {$\ldots$};
\draw (56,0) -- (65,10) -- (75,-10) -- (80,0);
\draw (70,20) -- (80,0) -- (90,20);
\fill[white] (80,20) ellipse (10 and 3);
\draw (80,20) ellipse (10 and 3);

\draw[->] (85,0) -- (95,0);

\draw (100,20) -- (110,0) -- (120,20);
\fill[white] (110,20) ellipse (10 and 3);
\draw (110,20) ellipse (10 and 3);

\draw (110,-5) node {$\&$};

\draw[anchor=east] (102,-20) node {$\ldots$};
\draw (102,-20) -- (105,-10) -- (110,-30) -- (112.5,-20); 
\draw[->] (115,-20) -- (125,-20);
\draw[anchor=west] (125,-20) node {$\ldots$};

\draw (110, -45) node {$\vdots$};
\draw (165, -45) node {$\vdots$};

\draw[->] (145,45) -- (155,45);
\draw[dashed] (155,55) -- (165,35) -- (175,55);
\fill[white] (165,55) ellipse (10 and 3);
\draw[dashed] (165,55) ellipse (10 and 3);

\draw[->] (145,10) -- (155,10);
\draw (155,20) -- (165,0) -- (175,20);
\fill[white] (165,20) ellipse (10 and 3);
\draw (165,20) ellipse (10 and 3);

\draw[->] (181,0) -- (190,0);
\draw[snake=brace, thick] (180,60)--(180,-60);

\draw (195,20) -- (205,0) -- (215,20);
\fill[white] (205,20) ellipse (10 and 3);
\draw (205,20) ellipse (10 and 3);
\draw[anchor=north] (205,2) node {$\vdots$};
\draw[dashed] (195,40) -- (205,20) -- (215,40);
\fill[white] (205,40) ellipse (10 and 3);
\draw[dashed] (205,40) ellipse (10 and 3);

\draw (7.5,-25) node {$M$};
\draw (75,55) node {$C^0_0$};
\draw[anchor=south] (110,21) node {$C^1_0$};
\draw[anchor=south] (165,56) node {$T(C^0_0)$};
\draw[anchor=north] (165,1) node {$T(C^1_0)$};

\draw (205,-25) node {$T$};
\end{tikzpicture}
\end{center}
\caption{Turning a $\cfpo{\omega}$ into a Tree}
\end{figure}

Therefore every Rubin-complete $\cfpo{\omega}$ is treelike.  Let $\langle M, \leq_M \rangle$ be a not necessarily Rubin complete $\cfpo{\omega}$, with Rubin completion $\langle M^R, \leq_M, I \rangle$.  There is a tree $T(M)^R$ such that
$$\aut(\langle M^R, \leq_M, I \rangle) \cong_P \aut(\langle T(M^R), \leq_T, I, U \rangle)$$
We define $T(M):= \lbrace x \in T(M^R) \: : \: T(M^R) \models \neg I(x) \rbrace$.  Then
$$\aut(\langle M, \leq_M \rangle) \cong_P \aut(\langle T(M), \leq_T, U \rangle)$$
\end{proof}

\subsection{Disconnected CFPOs}

While this section has only proved results about connected CFPOs, they are readily extended to disconnected CFPOs.

\begin{prop}\label{prop:FixedPointDisconnected}
Let $M$ be a possibly disconnected CFPO with connected components $A_i$, where the $i$ are indexed by $I$.  If $A_i$ is treelike for all $i \in I$ then $M$ is treelike.
\end{prop}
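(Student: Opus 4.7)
The plan is to assemble the trees witnessing treelikeness of each component into a single tree for $M$. The key observation is that $\aut(M)$ decomposes as a direct product of generalised wreath products
$$\aut(M) \cong \prod_\tau \aut(A_\tau) \wr \mathrm{Sym}(I_\tau),$$
where $\tau$ ranges over isomorphism types of connected components and $I_\tau$ indexes the components of that type. This uses nothing more than the fact that a poset automorphism must send a connected component to another of the same type, and conversely any choice of such a permutation together with componentwise automorphisms glues to an automorphism of $M$. The goal is to realise exactly this group as the automorphism group of a coloured tree.

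For each type $\tau$ occurring among the $A_i$, fix a coloured tree $T_\tau$ with $\aut(T_\tau) \cong_A \aut(A_\tau)$, supplied by the hypothesis. Add a new unary predicate $P_\tau$ realised at every element of $T_\tau$; since it is realised universally, $P_\tau$ does not change the automorphism group, but it will label $T_\tau$ uniquely in the final assembly. Now let $T$ be the disjoint union $\bigsqcup_{i \in I} T_{\tau(i)}$ of fresh copies, one per component, together with a new bottom element $r$ declared strictly below every other point and given a fresh colour realised nowhere else. One checks immediately that $T$ is a tree: each proper initial section of $T$ is either $\{r\}$ or $\{r\}$ together with an initial section of a single copy of some $T_\tau$ (hence linearly ordered), and every pair of points shares $r$ as a common lower bound.

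It remains to verify that $\aut(T) \cong_A \aut(M)$. Since $r$ bears a unique colour, every automorphism of $T$ fixes $r$ and therefore permutes the copies of the $T_\tau$; the predicates $P_\tau$ force these permutations to preserve types, and within any one type all permutations of the copies combined with arbitrary automorphisms of each copy extend to automorphisms of $T$. One therefore obtains $\aut(T) \cong \prod_\tau \aut(T_\tau) \wr \mathrm{Sym}(I_\tau)$, which under the given isomorphisms $\aut(T_\tau) \cong_A \aut(A_\tau)$ matches the decomposition of $\aut(M)$ above. The main delicate point is the use of the $P_\tau$ markers: without them, $\aut(T)$ could swap components of different types $\tau \neq \tau'$ whenever $T_\tau \cong T_{\tau'}$ as coloured trees, which may genuinely happen because the treelike hypothesis supplies only an abstract group isomorphism rather than a structural one.
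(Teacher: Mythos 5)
Your construction is essentially the paper's: it too forms the witnessing tree by placing the component trees disjointly above a single new root $r$ and argues that automorphisms of the result permute the copies exactly as automorphisms of $M$ permute the components. The one substantive difference is that you fix a single witness tree $T_\tau$ per isomorphism type and mark it with a type predicate $P_\tau$, which is a genuine improvement in rigour --- the paper's bare assertion that two cones of $r$ ``may only be mapped to one another if their corresponding $A_i$ are isomorphic'' is precisely what can fail when $A_i \not\cong A_j$ but the chosen witness trees happen to be isomorphic (treelikeness only supplies an abstract group isomorphism), and your coherent choice of one $T_\tau$ per type likewise guarantees the converse direction, that every swap of isomorphic components is realised in the tree.
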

\begin{proof}
For all $i \in I$, let $\langle T(A_i), \leq, U \rangle$ be the coloured tree such that $\aut(\langle T(A_i), \leq_i, U_i \rangle) \cong_A \aut(A_i)$.

$\mathcal{T} := \langle \lbrace r \rbrace \cup \bigcup(T(A_i)), \leq_T, U_T \rangle$
where

\[
\begin{array}{rcl}
\mathcal{T} \models (x \leq_T y) & \Leftrightarrow & (( \exists i \in I \: (x \leq_i y)) \vee (x=r)) \\
\mathcal{T} \models U_T(x) & \Leftrightarrow & \exists i \in I \: U_i(x) 
\end{array}
\]

$\aut(M) \cong_A \aut(\mathcal{T})$, as each of the cones of $r \in \mathcal{T}$ share an automorphism group with its corresponding $A_i$, and may only be mapped to one another by an automorphism of $\mathcal{T}$ if their corresponding $A_i$ are isomorphic.
\end{proof}

\begin{remark}\label{remark:disconnectedinterpretation}
If each of the $T(A_i)$ are obtained using Definition \ref{Def:XandY}, then we may adapt the interpretation in Lemma \ref{lemma:MinterpretableinT(M)} by changing $\phi_{Dom}$ to $x \not= r$ to obtain an interpretation of $\langle M, \leq_M  \rangle$ in $\mathcal{T}$.
\end{remark}

\section{CFPOs in Model Theory}\label{CFPOsMT}

The theory of trees is known to have certain model theoretic properties.  Parigot showed in 1982 that the theory of trees is NIP, and classified the stable ones \cite{Parigot1982}, while Simon showed in 2011 that the theory of trees is inp-minimal \cite{Simon2011}.  The observations that have been made in this section give an easy method for extending these results to the theory of CFPOs.

\subsection{NIP and Trees}

\begin{dfn}
A formula $\phi(\bar{x},\bar{y})$ is said to have the \textbf{independence property} (for a complete theory $T$) if in every model $M$ of $T$ there is, for each $n<\omega$, a family of tuples $\bar{b}_0, \bar{b}_1, \ldots \bar{b}_{n-1}$ such that for every $I \subseteq \lbrace 0, 1, \ldots n-1 \rbrace$ there is some tuple $\bar{a} \in M$ such that
$$M \models \phi(\bar{a},\bar{b}_i) \Leftrightarrow i \in I$$
$T$ is said to be \textbf{NIP} if no formula in $T$ has the independence property.
\end{dfn}

Note that if $T$ is interpretable in $S$ then if $\phi$ has the independence property for $T$ then the interpretation of $\phi$ has the independence property for $S$.  This means that if $T$ is interpretable in $S$ and $S$ is NIP, then $T$ is NIP.

The `headline' result of \cite{Parigot1982} does not mention NIP.

\begin{theorem}[Parigot, Theorem 2.6 of \cite{Parigot1982}]
A type over a tree never has more than $2^{\aleph_0}$ coheirs.
\end{theorem}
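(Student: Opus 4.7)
My plan is to describe coheirs of a $1$-type over a tree $T$ combinatorially and then count the combinatorial possibilities.

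First, I would analyse the shape of a $1$-type $p(x) \in S(T)$. Writing $C_p := \{a \in T : p \vdash a \leq x \textnormal{ or } p \vdash a \geq x\}$, the tree axioms force $C_p$ to be a chain, since two elements comparable with a common third element in a tree are themselves comparable. The type $p$ is then recovered from: (i) the cut induced on $C_p$, and (ii) for each $a \in T \setminus C_p$, the unique ``meet-point'' in $C_p$ where the path from $a$ to (any realisation of) $x$ diverges. Finite satisfiability of $p$ in $T$ guarantees that this combinatorial data is coherent.

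Second, I would analyse coheirs of $p$ in the monster model $\mathfrak C$. Any coheir $q \supseteq p$ is a global extension finitely satisfiable in $T$; I would argue that $q$ is pinned down by a ``path completion'' of $C_p$ inside $\mathfrak C$ (the monster elements that $q$ declares comparable with $x$), together with a coherent direction choice at each newly introduced branching point. Finite satisfiability forces each direction to be approximated by a cofinal or coinitial family coming from $T$, and cycle-freeness ensures these families are organised by the order structure on $C_p$ alone, rather than by independent branches scattered elsewhere in $T$.

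The main counting step is to bound the number of such coherent direction systems by $2^{\aleph_0}$ \emph{uniformly in} $|T|$, which is also the main obstacle: the naive NIP bound on coheirs would depend on $|T|$. To extract only a continuum bound, I would work with a fixed countable ``skeleton'' of branching predicates attached to $C_p$ (for example a countable cofinal or coinitial sequence witnessing the cut structure of $C_p$), and show that every coherent direction-system corresponds to an ultrafilter on the countable Boolean algebra they generate. The number of such ultrafilters is at most $2^{\aleph_0}$, yielding the claimed bound. Establishing that this skeleton is genuinely countable and that it really does determine the coheir is the essential content of the argument, and is the point at which the cycle-free tree structure must be used in an essential way.
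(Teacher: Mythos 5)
This statement is not proved in the paper at all: it is quoted verbatim as Theorem 2.6 of Parigot's \emph{Th\'eories d'arbres}, and the author explicitly defers the reader to Parigot and to Casanovas's notes for a proof. So there is no in-paper argument to compare yours against; I can only assess your proposal on its own terms, and as it stands it is an outline whose decisive step is missing.

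Two concrete problems. First, your opening structural claim is false: in a tree (a lower semilinear order, as in this paper), the set $C_p$ of elements that $p$ declares comparable with $x$ need not be a chain. The justification you give --- ``two elements comparable with a common third element in a tree are themselves comparable'' --- holds for two elements \emph{below} a common element, but fails for two elements \emph{above} one: if $x$ is (or sits at) the root, every element of $T$ lies in $C_p$. Only $\{a : p \vdash a \leq x\}$ is guaranteed to be a chain; the part of $C_p$ above $x$ is an up-closed subtree, and the ``meet-point'' bookkeeping built on the chain picture has to be redone. Second, and more seriously, the entire force of the theorem is the \emph{uniform} bound $2^{\aleph_0}$, independent of $|T|$, and your proposed source for it --- ``a countable cofinal or coinitial sequence witnessing the cut structure of $C_p$'' --- need not exist, since chains in trees can have arbitrary uncountable cofinality. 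You acknowledge that showing the skeleton is countable and determines the coheir ``is the essential content of the argument,'' and that is exactly right: without it the proposal establishes nothing beyond the trivial bound in terms of $|T|$. So the key idea of Parigot's proof is precisely the part that is absent here.
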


`Coheirs' were defined by Poizat, appearing in \cite{Poizat1981} in 1981, the year before Parigot's paper was published.  If you wish to read the proof of this theorem, but find Poizat's French too daunting, then I recommend the seminar notes of Casanovas \cite{Casanovas}, which are in English.  I am not aware of any publicly available English translation or account of Parigot's paper.

\begin{dfn}[Poizat, \cite{Poizat1981}]
Let $M,N$ be models such that $M \prec N$.  Let $p(x) \subseteq q(x)$ where $q \in S_1(N)$ and $p \in S_1(M)$.  We say that $q$ is a \textit{coheir} of $p$ if $q$ is finitely satisfiable in $M$.
\end{dfn}

\begin{theorem}[Poizat, \cite{Poizat1981}]
Let $T$ be a theory.
\begin{enumerate}
\item If $T$ has the NIP then for all $M$ such that $T \models M$ and $|M| = \lambda \geq |T|$, for all $p \in  S_1(M)$ there are at most $2^\lambda$ coheirs of $p$.
\item If $T$ has the IP then for every $\lambda \geq |T|$ there is an $M$ such that $T \models M$ and $|M| = \lambda \geq |T|$, and there is $p \in  S_1(M)$ such that $p$ has $2^{2^\lambda}$ coheirs.
\end{enumerate}
\end{theorem}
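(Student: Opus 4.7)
The plan is to handle the two parts separately, each comparing coheirs to ultrafilter-like objects: Part 1 is a counting argument and Part 2 an explicit construction. Throughout, a \emph{global coheir} of $p \in S_1(M)$ means a coheir of $p$ over the monster model $\mathfrak{C}$, and I will use that any coheir of $p$ over any $N \succ M$ extends by compactness to a global coheir (every global coheir restricts to a coheir over $N$, so restriction is surjective). Hence it suffices to bound (resp.\ produce) global coheirs.

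For Part 1, the first observation is that every global coheir $q$ of $p$ is $M$-invariant: if $\bar c \equiv_M \bar c'$ and $\phi(x, \bar c) \wedge \neg\phi(x, \bar c') \in q$, then this formula is satisfied by some $m \in M$, contradicting $\bar c \equiv_M \bar c'$. So the number of global coheirs is bounded by the number of $M$-invariant global $1$-types. The theorem then follows from Shelah's NIP bound: for $T$ NIP and $|M| = \lambda \geq |T|$, $|S^{\mathrm{inv}}_1(\mathfrak{C}/M)| \leq \mathrm{ded}(\lambda) \leq 2^\lambda$. The idea behind that bound is that an $M$-invariant type factors through $\mathrm{tp}(\cdot/M)$, so is determined by its behaviour on $M$-indiscernible sequences, and NIP (via the alternation-rank characterisation) forces this behaviour to stabilise on a tail; Sauer-Shelah-style counting of $\phi$-types over $M$ then gives the $\mathrm{ded}(\lambda)$ bound.

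For Part 2, fix $\phi(x, y)$ witnessing IP and use compactness to build a model $M$ of size $\lambda$ containing a sequence $(a_i)_{i < \lambda}$ of realisations of a common $1$-type $p \in S_1(M)$, together with, in a sufficiently saturated extension $N \succ M$, parameters $c_S$ for each $S \subseteq \lambda$ with $\models \phi(a_i, c_S) \iff i \in S$; the $a_i$ are arranged to realise $p$ by starting from an $M_0$-indiscernible shattering sequence and setting $M \supseteq M_0 \cup \{a_i\}$. For each ultrafilter $\mathcal{U}$ on $\lambda$ define the average type
$$q_{\mathcal{U}} := \{\psi(x, \bar n) : \bar n \in N, \{i < \lambda : \models \psi(a_i, \bar n)\} \in \mathcal{U}\}.$$
Completeness and consistency are immediate from the ultrafilter axioms: a finite conjunction of formulas in $q_{\mathcal{U}}$ corresponds to the intersection of finitely many $\mathcal{U}$-large subsets of $\lambda$, which is non-empty and hence contains some $a_i$ witnessing the conjunction. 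This shows simultaneously that $q_{\mathcal{U}}$ is finitely satisfied in $\{a_i\} \subseteq M$, so is a coheir of its restriction to $M$, which is $p$. For $\mathcal{U} \neq \mathcal{V}$ pick $S \in \mathcal{U} \setminus \mathcal{V}$; then $\phi(x, c_S) \in q_{\mathcal{U}} \setminus q_{\mathcal{V}}$, so the $q_{\mathcal{U}}$ are pairwise distinct. Since there are $2^{2^\lambda}$ ultrafilters on $\lambda$, this produces $2^{2^\lambda}$ coheirs of $p$.

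The main obstacle is Shelah's NIP invariant-type bound invoked in Part 1: its proof is non-trivial and depends on the alternation-rank characterisation of NIP, so I would import it as a black box from a standard reference such as Simon's \emph{Guide to NIP Theories}. A secondary technical care point in Part 2 is arranging a uniform common type $p$ for all the $a_i$ over $M$, handled by starting from an indiscernible sequence over a small base and then enlarging.
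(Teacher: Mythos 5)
The paper does not prove this statement: it is quoted verbatim from Poizat \cite{Poizat1981} as background, so there is no internal proof to compare against and your attempt has to be judged on its own. Part 1 is acceptable as far as it goes: the reduction (coheirs extend to global coheirs; global coheirs are $M$-invariant) is correct and standard, and the imported black box does exist (in an NIP theory an $M$-invariant global type is determined by the restriction to $M$ of its Morley sequence, so there are at most $|S_\omega(M)| \leq 2^{\lambda}$ of them). Be aware, though, that your stated justification conflates two different bounds: $\mathrm{ded}(\lambda)$ is the usual bound for $|S_1(M)|$ in an NIP theory, not for the number of invariant types, and since the invariant-type bound is the entire content of Part 1, "import it as a black box" means Part 1 is a citation rather than a proof.

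Part 2 contains a genuine gap. You require the sequence $(a_i)_{i<\lambda}$ to lie inside $M$ (you need this: finite satisfiability of $q_{\mathcal U}$ in $\{a_i\}$ only yields a coheir if $\{a_i\}\subseteq M$) \emph{and} to consist of realisations of a single complete type $p\in S_1(M)$. These demands are incompatible: for $a\in M$ the type $\mathrm{tp}(a/M)$ contains the formula $x=a$, so distinct elements of $M$ have distinct complete types over $M$, and the $a_i$ must be pairwise distinct for the parameters $c_S$ to separate ultrafilters. Starting from an $M_0$-indiscernible sequence does not help, since indiscernibility is destroyed the moment the $a_i$ are absorbed into $M$. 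Consequently your claim that each $q_{\mathcal U}$ restricts to the same $p$ on $M$ fails: the restriction $q_{\mathcal U}|_M$ genuinely depends on $\mathcal U$. The standard repair is a pigeonhole step: you have $2^{2^\lambda}$ pairwise distinct global types $q_{\mathcal U}$, each finitely satisfiable in $M$, and the map $\mathcal U\mapsto q_{\mathcal U}|_M$ lands in $S_1(M)$, which has size at most $2^{\lambda}$; hence some fibre has size $2^{2^\lambda}$, and that fibre is a family of $2^{2^\lambda}$ coheirs of a single $p\in S_1(M)$. With that change the rest of your Part 2 (consistency and completeness of the average types, finite satisfiability, separation by the formulas $\phi(x,c_S)$) goes through.
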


Parigot's results do not stop with trees, however.  He extends to `arborescent' structures, defined by Schmerl.

\begin{dfn}[Schmerl \cite{SchmerlArborescent}]
Let $\mathcal{L} = \langle R_0, \ldots, R_{m-1}, U_0, \ldots ,U_{n-1} \rangle$ be a finite language where each $R_i$ is a binary predicate and each $U_i$ is a unary predicate.

Let $(x,y) \equiv (u,v)$ by the following quaternary formula:
$$x \not= y \wedge u \not= v \wedge \bigwedge_{i<m} \left( (R_i(x,y) \leftrightarrow R_i(u,v)) \wedge (R_i(y,x) \wedge R_i(v,u)) \right)$$

Let $M$ be an $\mathcal{L}$-structure.  $M$ is said to be \textbf{arborescent} if for all finite $B \subseteq M$, if $|B| \geq 2$ then there are distinct $a,b \in B$ such that if $c \in B \setminus \lbrace a,b \rbrace$ then $(a,c) \equiv (b,c)$
\end{dfn}

Finitely coloured trees are examples of arborescent structures.

\begin{prop}[Parigot, Corollary 2.8 of \cite{Parigot1982}]
All arborescent structures are NIP.
\end{prop}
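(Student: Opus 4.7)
The plan is to extend Parigot's earlier coheir-bound theorem from trees to arborescent structures, and then invoke Poizat's characterization of NIP via coheirs. By the Poizat theorem stated just above, it suffices to show that over an arborescent $\mathcal{L}$-structure $M$ of cardinality $\lambda \geq |T|$, every complete 1-type $p \in S_1(M)$ has at most $2^\lambda$ coheirs in any elementary extension $N \succ M$.

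The core of the proof would adapt the tree argument by using the arborescent leaf-peeling property in place of the tree order. Fix a coheir $q$ of $p$, realized by some $a$ in a further elementary extension. For any finite $B \subseteq M$, the arborescent property applied to $B \cup \{a\}$ yields a pair $\{c, d\} \subseteq B \cup \{a\}$ with $(c, e) \equiv (d, e)$ for all $e \in (B \cup \{a\}) \setminus \{c, d\}$. In the principal case where $c, d \in B$, once the unary and binary data linking $a$ to $c$ and $d$ is fixed, the full quantifier-free type of $a$ over $B$ is controlled by its type over $B \setminus \{c\}$, because the arborescent relation forces the $a$-to-$c$ data to agree with the $a$-to-$d$ data. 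Iterating this reduction produces a tree-indexed decomposition of $\mathrm{tp}(a/B)$, and the number of such decompositions can be bounded exactly as in Parigot's tree proof, giving the $2^\lambda$ coheir bound and hence NIP.

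The main obstacle is twofold. First, the leaf pair $\{c, d\}$ produced by the arborescent axiom depends on $B \cup \{a\}$ and in principle on $a$ itself, so a naive iteration would risk counting types by a different decomposition for each coheir; a natural remedy is to fix a well-ordering of $M$ and select the leaf pair at each stage by a uniform rule such as lexicographic minimality, mimicking a depth-first traversal of a tree and aligning the combinatorics with Parigot's argument. Second, one must handle the degenerate case in which $a$ is itself a leaf of $B \cup \{a\}$; here $a$ agrees with some $d \in B$ on all binary relations to $B \setminus \{d\}$, which bounds the number of such types by $|B|$ and feeds back into the same counting scheme.

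A cleaner alternative strategy is to interpret $M$ directly in a coloured tree $\widetilde T$ built by iterating leaf-removal, and then transfer NIP from $\widetilde T$ (known by Parigot's tree theorem together with Poizat) to $M$ using the observation, recorded just before the proposition, that NIP passes through interpretations. This is analogous in spirit to the $T(M)$ constructions of Section \ref{orderconditions}, where a CFPO is unfolded into a tree while preserving its symmetry data, except that here we unfold a general binary-relational structure while preserving the first-order data needed for NIP.
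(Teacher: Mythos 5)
The paper does not prove this statement at all: it is quoted verbatim as Corollary 2.8 of Parigot's paper and used as a black box, so there is no internal proof to compare yours against. Judged on its own terms, your proposal is a plan rather than a proof, and both of its branches have real gaps. In the coheir-counting branch, the key step --- that once the leaf pair $\{c,d\}$ of $B \cup \{a\}$ is identified, $\mathrm{tp}(a/B)$ is ``controlled by'' $\mathrm{tp}(a/B\setminus\{c\})$ --- is only justified for the quantifier-free binary data, since the arborescent axiom only speaks about the relations $R_i$ and $U_i$ on pairs. The coheir bound of Poizat's theorem concerns complete first-order types, and Parigot's argument for trees does substantially more work than tracking quantifier-free data; you have not supplied the analogue of that work. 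You also correctly flag that the leaf pair depends on $B\cup\{a\}$, hence potentially on the realization $a$, which would break the counting, but the proposed remedy (a uniform lexicographic selection rule) is not carried out and it is not clear it interacts correctly with the arborescent axiom, which only guarantees existence of \emph{some} leaf pair, not of one compatible with a prescribed ordering.

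The second branch is closer to how this result actually lives in the literature: the reduction of arborescent structures to (coloured) trees via interpretability is essentially Schmerl's theorem --- the paper's bibliography even lists ``Arborescent Structures. II: Interpretability'' --- and combined with the paper's observation that NIP is inherited under interpretation, it would finish the proof immediately. But you have not constructed the interpretation. ``Iterating leaf-removal'' is a finitary operation (the arborescent axiom applies only to finite $B$), and globalizing it to an interpretation of an infinite $M$ in a single coloured tree $\widetilde{T}$, uniformly and first-order definably, is precisely the nontrivial content of Schmerl's result. Either cite that theorem explicitly or supply the construction; as written, the step from the finite leaf-peeling property to a global tree interpretation is the missing idea.
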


\subsection{inp-minimality and Trees}

\begin{dfn}[Shelah, Definition 7.3 of \cite{ShelahClassification}]
An independence pattern (an inp-pattern) of length $\kappa$ is a sequence of pairs $(\phi^\alpha(x,y), k^\alpha)_{\alpha < \kappa}$ of formulas such that there exists an array $\langle a_i^\alpha \: : \: \alpha < \kappa \: , i < \lambda \rangle$ such that:
\begin{itemize}
\item Rows are $k^\alpha$-inconsistent: for each $\alpha < \kappa$, the set $\lbrace \phi^\alpha(x, a^\alpha_i) : i < \lambda \rbrace$ is $k^\alpha$-inconsistent,
\item Paths are consistent: for all $\eta \in \lambda^\kappa$, the set $\lbrace \phi^\alpha(x,a^\alpha_{\eta(\alpha)}) \: : \: \alpha < \kappa \rbrace$ is consistent.
\end{itemize}
\end{dfn}

Note that if $M$ is interpretable in $N$ then any independence pattern in $M$ is also an independence pattern of $N$.

\begin{dfn}[Goodrick \cite{Goodrick}]
A theory is inp-minimal if there is no inp-pattern of length two in a single free variable.
\end{dfn}

\begin{theorem}[Simon, Proposition 4.7 of \cite{Simon2011}]
If $\langle T, \leq, C_i \rangle$ is a coloured tree then $\mathrm{Th}(\langle T, \leq, C_i \rangle)$ is inp-minimal.
\end{theorem}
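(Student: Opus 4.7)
The plan is to exploit the acyclicity of trees: a length-two inp-pattern produces a two-dimensional array of simultaneously realisable formulas, and in a tree such a configuration should collapse onto a single branch, which in turn forces one of the rows to be consistent.

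First I would establish a quantifier-elimination description of definable sets in one variable. Working in the natural language augmented by the partial meet operation $(y,z)\mapsto y\wedge z$ (the infimum, when it exists) and a betweenness predicate, the theory of a coloured tree admits QE. Consequently any formula $\phi(x,\bar y)$ with $x$ a singleton defines a boolean combination of order-comparisons $x\lessgtr t(\bar y)$ and colour predicates $C_i(x)$, where the terms $t(\bar y)$ are iterated meets of the parameters. In particular, on a tree the definable set $\phi(x,\bar a)$ is a boolean combination of finitely many cones based at points in $\mathrm{dcl}(\bar a)$, intersected with colour classes.

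Second, I would normalise a putative inp-pattern. Suppose $(\phi^0(x,y),k^0),(\phi^1(x,y),k^1)$ is witnessed by an array $\langle a^\alpha_i:\alpha<2,\,i<\omega\rangle$. By Ramsey plus compactness I may pass to a mutually indiscernible subarray, and by splitting into the finitely many boolean atoms produced by QE I may assume each $\phi^\alpha$ is a single "cone-type" condition: $x\leq y$, $x\geq y$, $x\parallel y$, or $x=y$, possibly intersected with a fixed colour class. The hypothesis of $k^\alpha$-inconsistency rules out the equality case and forces each row to be a genuine indiscernible sequence of incomparable (in the appropriate sense) parameters.

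Third, I would use cycle-freeness to conclude. Pick a witness $b_{ij}$ of $\phi^0(x,a^0_i)\wedge\phi^1(x,a^1_j)$ for every $(i,j)$. Uniqueness of paths in the tree forces the meet $b_{ij}\wedge a^0_i$ and $b_{ij}\wedge a^1_j$ to be coherent across the array; combined with row-indiscernibility in $\alpha=0$, this forces all the $b_{ij}$ (as $j$ varies, for fixed $i$) to lie on a single branch independent of $i$. Following that common branch produces a point lying in $\phi^0(x,a^0_i)$ for every $i$ simultaneously, contradicting $k^0$-inconsistency of row $0$.

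The main obstacle is step three: translating acyclicity into a genuine alignment of witnesses demands careful case work when $\phi^\alpha$ involves the complement of a cone, since such sets can be large without pinning down any branch. The cleanest way I can see to handle these cases is to replace $\phi^\alpha$ by the smallest cone containing its indiscernible row, using the fact that on a mutually indiscernible array a complement-of-cone condition that is $k^\alpha$-inconsistent forces the parameters into a very restricted configuration (they must all lie below, or all above, a single meet point in $\mathrm{dcl}(\bar a^\alpha)$), which reduces to the positive cone case already handled.
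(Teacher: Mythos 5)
First, a point of order: the paper does not prove this statement at all --- it is imported verbatim as Proposition 4.7 of Simon \cite{Simon2011}, with the citation standing in for the proof --- so there is no in-paper argument to compare yours against, and any assessment has to be made against Simon's published proof. Your overall architecture (describe one-variable definable sets as boolean combinations of cones and colour classes, pass to a mutually indiscernible array, then exploit acyclicity) is the right general shape for such an argument.

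But your third step, which is the entire content of the theorem, does not work as written. Consider the simplest genuinely treelike pattern: $\phi^0(x,y)\equiv x\geq y$ and $\phi^1(x,y)\equiv x\geq y$, with each row an antichain (which is exactly the configuration that $2$-inconsistency forces for this formula on an indiscernible row). Here the witnesses $b_{ij}$ do \emph{not} lie on a branch independent of $i$ --- for distinct $i$ they live in the pairwise disjoint cones above the $a^0_i$ --- and there is no point satisfying $\phi^0(x,a^0_i)$ for all $i$; the existence of such a point is precisely what $k^0$-inconsistency forbids, so an argument that manufactures one must be using the hypotheses somewhere you have not said. The actual contradiction in this configuration arises by a different mechanism: since $a^0_i$ and $a^1_j$ both lie below $b_{ij}$ and the set of predecessors of a point in a tree is a chain, every pair $(a^0_i,a^1_j)$ is comparable, and a pigeonhole argument then forces two elements of one of the two antichains to be comparable, collapsing that row. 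Your sketch never isolates this mechanism, and the ``single common branch'' picture is false in exactly the case that distinguishes trees from linear orders. Combined with the complement-of-cone cases, which you explicitly leave as an unproved reduction, the combinatorial heart of the proof is missing: what you have is a plausible reduction to a statement about crossing families of cones over a mutually indiscernible array, not a proof of that statement.
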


\subsection{CFPOs}

How can we apply these results to CFPOs?

Let $M$ be a CFPO with connected components $A_i$, indexed by $I$.  For each $A_i$, pick an $a_i \in A_i$ and introduce a new unary predicate $A$ such that
$$M \models A(x) \Leftrightarrow \exists i \in I \: x = a_i$$
Since we are adding an additional symbol to the language $Th(\langle M, \leq_M \rangle)$ can be interpreted in $Th(\langle M, \leq_M, A \rangle)$ simply by forgetting $A$.

$a_i$ is a fixed point of every $\langle A_i, \leq_M, A \rangle$ so we may invoke Remark \ref{remark:disconnectedinterpretation} to note that $Th(\langle M, \leq_M , A \rangle$ is interpretable in $ Th(\mathcal{T})$.

Therefore every CFPO is interpretable in an NIP, inp-minimal theory, and hence is NIP and inp-minimal.

This shows that if a property that is closed under taking an interpretation is possessed by the theory of coloured trees, then it is possessed by the CFPOs, but the interpretation here is of a special form.  If we are allowed to fix points in a CFPO, we are essentially handling a tree, thus I expect any property of the coloured trees that allows reference to a set of parameters to also be possessed by the CFPOs.

\section{Group Conditions}\label{gpconditions}

\begin{dfn}\label{dfn:dinfinity}
$\dinfinity$, the \textbf{infinite dihedral group}, is the group with the following presentation $\langle \sigma, \tau \, \vert \, \sigma^2 = 1 , \sigma \tau \sigma =  \tau^{-1} \rangle$.
\end{dfn}

How $\dinfinity$ occurs as a subgroup of an automorphism group of a CFPO characterises whether it is treelike or not.  We will first examine how $\dinfinity$ can act on trees.

\subsection{Dendromorphic Groups }

\begin{dfn}
If $T$ is a tree that contains points $a$ and $b$ then $$B(a;b) := \lbrace t \in T \, : \, a< t \wedge b \rbrace$$
$B(a;b)$ is the cone of $a$ that contains $b$.  If $a \not< b $ then $B(a;b) = \emptyset$.  If $B$ is a set such that $a \leq B$ then
$$B(a,B):= \bigcup_{b \in B} B(a;b)$$
\end{dfn}

\begin{dfn}
Given an abstract group $G$ and a permutation group $(H,S,\mu(h,s))$ their wreath product, written as $G \wr_{S} H$, is the abstract group on domain
$$H \times \lbrace \eta : S \rightarrow G \rbrace$$
We use $\eta(s)$ to denote the function $s \mapsto \eta(s)$, and so $\eta(s_0 s)$ is the function $s \mapsto \eta(s_0 s)$.  The group operation of $G \wr_{S} H$ is given by
$$(h_0, \eta_0(x))(h_1, \eta_1(x))=(h_0 h_1, \eta_0(\mu (h_1^{-1},x)) \eta_1(x))$$

When $G=\aut(M)$ and $H=\aut(N)$ their wreath product $G \wr H$ is the automorphism group of the structure obtained by replacing every element of $N$ with a copy of $G$.
\end{dfn}

\begin{remark}
$\mathbb{Z} \wr \mathbb{Z}_2$ is the automorphism group of the structure obtained by replacing the elements of a 2-element antichain by copies of $(\mathbb{Z},\leq)$, while $\mathbb{Z}_2 \wr \mathbb{Z}$ is the automorphism group of the structure obtained by replacing the elements of $(\mathbb{Z}, \leq)$ with 2-element antichains (the lamplighter group).

\begin{figure}[h]
\begin{center}
\begin{tikzpicture}[scale=0.1]
\draw (10,25) -- (10,0);
\draw (20,25) -- (20,0);

\fill (10,20) circle (0.5);
\fill (10,15) circle (0.5);
\fill (10,10) circle (0.5);
\fill (10,5) circle (0.5);

\fill (20,20) circle (0.5);
\fill (20,15) circle (0.5);
\fill (20,10) circle (0.5);
\fill (20,5) circle (0.5);

\fill (60,20) circle (0.5);
\fill (70,20) circle (0.5);
\fill (60,12.5) circle (0.5);
\fill (70,12.5) circle (0.5);
\fill (60,5) circle (0.5);
\fill (70,5) circle (0.5);

\draw[dashed] (65,25) -- (65,0);

\draw (65,-5) node {$\mathbb{Z}_2 \wr \mathbb{Z}$};
\draw (15,-5) node {$\mathbb{Z} \wr \mathbb{Z}_2$};
\end{tikzpicture}
\end{center}
\caption{Mnemonic for the Wreath Product}
\end{figure}
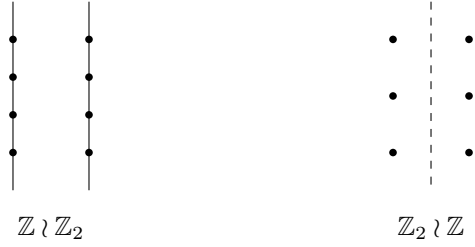

\end{remark}

\begin{remark}
If group $G$ acts on set $X$, with $X_0 \subseteq X$, then $G_{\lbrace X_0 \rbrace}$ is the set-wise stabiliser of $X_0$, while $G_{(X_0)}$ is the point-wise stabiliser.  Similarly for automorphism groups, $\aut_{\lbrace X_0 \rbrace}(M)$ is the set-wise stabiliser of $X_0$ in $M$, while $\aut_{( X_0 )}(M)$ denotes the point-wise stabiliser.  If $X_0 = \lbrace x \rbrace$ then these two notions coincide and we use the pithier expression $G_{x}$ or $\aut_{X_0}(x)$.
\end{remark}

\begin{dfn}\label{dfn:regular}
A tree $T$ is said to be $\textit{regular}$ if:
\begin{enumerate}
\item all the maximal chains are isomorphic to each other;
\item the maximal chains are isomorphic to a finite linear order or $\mathbb{N}$;
\item the ramification order of any non-maximal element of $T$ is at least 2 but finite; and
\item if $|T^{\leq x}|= |T^{\leq y}|$ then the ramification order of $x$ equals the ramification order of $y$.
\end{enumerate}
A tree $T$ is said to be $\textit{fh-regular}$ (finite height) if it is regular and the maximal chains are finite.
\end{dfn}

\begin{figure}
\begin{center}
\begin{tikzpicture}[scale=0.09]

\fill (50,0) circle (0.5);

\fill (25,10) circle (0.5);
\fill (75,10) circle (0.5);
\draw (25,10) -- (50,0) -- (75,10);

\fill (10,20) circle (0.5);
\fill (20,20) circle (0.5);
\draw (20,20) -- (25,10) -- (10,20);
\fill (30,20) circle (0.5);
\fill (40,20) circle (0.5);
\draw (40,20) -- (25,10) -- (30,20);

\fill (60,20) circle (0.5);
\fill (70,20) circle (0.5);
\draw (60,20) -- (75,10) -- (70,20);
\fill (80,20) circle (0.5);
\fill (90,20) circle (0.5);
\draw (90,20) -- (75,10) -- (80,20);

\fill (8,30) circle (0.5);
\fill (12,30) circle (0.5);
\draw (8,30) -- (10,20) -- (12,30);
\fill (18,30) circle (0.5);
\fill (22,30) circle (0.5);
\draw (18,30) -- (20,20) -- (22,30);
\fill (28,30) circle (0.5);
\fill (32,30) circle (0.5);
\draw (28,30) -- (30,20) -- (32,30);
\fill (38,30) circle (0.5);
\fill (42,30) circle (0.5);
\draw (38,30) -- (40,20) -- (42,30);

\fill (58,30) circle (0.5);
\fill (62,30) circle (0.5);
\draw (58,30) -- (60,20) -- (62,30);
\fill (68,30) circle (0.5);
\fill (72,30) circle (0.5);
\draw (68,30) -- (70,20) -- (72,30);
\fill (78,30) circle (0.5);
\fill (82,30) circle (0.5);
\draw (78,30) -- (80,20) -- (82,30);
\fill (88,30) circle (0.5);
\fill (92,30) circle (0.5);
\draw (88,30) -- (90,20) -- (92,30);

\end{tikzpicture}
\end{center}
\caption{Example of a Regular Tree}
\end{figure}
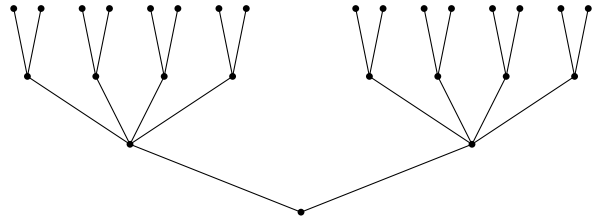

\begin{remark}\label{lemma:fh-regular}
Let $T$ be a finite tree.  $\aut(T)$ acts 1-transitively on the maximal elements of $T$ if and only if $T$ is fh-regular.
\end{remark}

\begin{dfn}
A group $G$ is said to be a \treecauser if it is a Cartesian product of copies at least one of:
\begin{enumerate}
\item $\mathbb{Z} \wr \mathbb{Z}_2$;
\item $\sym$;
\item $\sym \wr \mathbb{Z}_2$; and
\item the automorphism group of a regular tree;
\end{enumerate}
\end{dfn}

Examples of the automorphism group of a regular tree include $S_n$, in particular $\mathbb{Z}_2$,  and $(S_n \wr \mathbb{Z}_2)$.

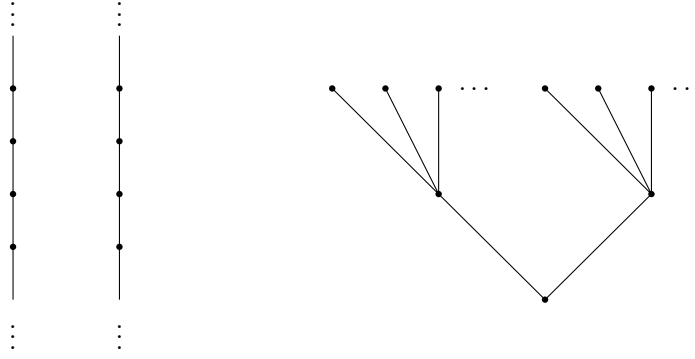
\begin{figure}
\begin{center}
\begin{tikzpicture}[scale=0.14]
\draw (10,25) -- (10,0);
\draw (20,25) -- (20,0);

\fill (10,20) circle (0.3);
\fill (10,15) circle (0.3);
\fill (10,10) circle (0.3);
\fill (10,5) circle (0.3);

\fill (20,20) circle (0.3);
\fill (20,15) circle (0.3);
\fill (20,10) circle (0.3);
\fill (20,5) circle (0.3);

\draw[anchor=north] (10,0) node {$\vdots$};
\draw[anchor=south] (10,25) node {$\vdots$};
\draw[anchor=north] (20,0) node {$\vdots$};
\draw[anchor=south] (20,25) node {$\vdots$};

\draw (70,10) -- (60,0) -- (50,10);
\draw (40,20) -- (50,10) -- (45,20);
\draw (50,20) -- (50,10);

\draw (60,20) -- (70,10) -- (65,20);
\draw (70,20) -- (70,10);

\fill (60,0) circle (0.3);
\fill (70,10) circle (0.3);
\fill (50,10) circle (0.3);
\fill (40,20) circle (0.3);
\fill (45,20) circle (0.3);
\fill (50,20) circle (0.3);
\fill (60,20) circle (0.3);
\fill (65,20) circle (0.3);
\fill (70,20) circle (0.3);

\draw[anchor=west] (51,20) node {$\ldots$};
\draw[anchor=west] (71,20) node {$\ldots$};

\end{tikzpicture}
\end{center}
\caption{Example of Trees whose automorphism group is a \treecauser}
\end{figure}

\begin{dfn}
Let $M$ be a CFPO, let $x \in M$ and let $G \subseteq \aut(M)$.
$$G(x):= \lbrace y \in M \: : \: \exists g \in G \, g(x)=y \rbrace$$
\end{dfn}

\begin{theorem}\label{thm:SuperDinf}
If $T$ is a tree and there exists a $G \leq \aut(T)$ such that $G \cong \dinfinity$ then there exists an $H$ such that $G \leq H \leq \aut(T)$ and $H$ is a \treecauser .
\end{theorem}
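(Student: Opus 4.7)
The plan is to analyse how the infinite-order generator $\tau$ acts on $T$, use the twisting relation $\sigma\tau\sigma = \tau^{-1}$ to pin down the action of $\sigma$, and in each case identify a natural larger subgroup $H \leq \aut(T)$ preserving the canonical decomposition of $T$ induced by the $G$-action. Since $T$ is a tree every pair of points has a meet, so the fixed set of $\tau$ is a (possibly empty) convex subtree, giving the dichotomy: $\tau$ has a fixed point, or it does not.

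In the fixed-point-free case, iterating the descending meets $x,\ x \wedge \tau(x),\ x \wedge \tau(x) \wedge \tau^{2}(x),\ldots$ (and similarly for $\tau^{-1}$) produces an axis: a chain $A \subseteq T$ on which $\tau$ acts as a translation. Because automorphisms preserve order, $\sigma$ cannot preserve $A$ setwise (it would have to reverse the order on $A$ in order to conjugate $\tau|_A$ to its inverse); hence $\sigma(A)$ is a distinct chain on which $\tau$ translates in the opposite direction, and a short tree-theoretic argument shows $\sigma(A)$ must be incomparable with $A$. The pair $\{A, \sigma(A)\}$ is then a two-element antichain of bi-infinite chains swapped by $\sigma$; the subgroup of $\aut(T)$ that stabilises this pair setwise and fixes the remainder of $T$ pointwise gives the required $\mathbb{Z} \wr \mathbb{Z}_{2}$ \treecauser containing $G$.

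When $\tau$ does fix a point, its fixed subtree $F_\tau$ is nonempty and $\sigma$-invariant (because $\sigma$ normalises $\langle\tau\rangle$). Infinite order of $\tau$ then forces the induced permutation on the cones attached to $F_\tau$ to contain an infinite-order cycle, and examining how $\sigma$ interacts with that cycle leads to three sub-cases, each realising one of the remaining generating types: $\tau$ shifts an $\omega$-orbit of isomorphic cones that $\sigma$ preserves setwise (yielding a $\sym$ factor), $\sigma$ swaps two such orbits (yielding a $\sym \wr \mathbb{Z}_{2}$ factor), or the infinite-order behaviour is confined within a regular subtree (yielding an $\aut$-of-regular-tree factor). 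In each sub-case I would take $H$ to be the full $\aut(T)$-stabiliser of the associated orbit decomposition, with the Cartesian-product structure of $H$ reading off from the rigidity of that decomposition.

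The main obstacle is the axis construction in the fixed-point-free case: for a tree that need not be Dedekind-complete one has to argue that the descending meets really stabilise (possibly after passing to the Rubin completion used elsewhere in the excerpt), and that $\sigma(A)$ is disjoint from and incomparable to $A$ rather than merging at some shared lower bound — this is where cycle-freeness of $T$ is used essentially. A secondary concern is verifying that the structural pieces identified in each sub-case genuinely decouple under $\aut(T)$, so that the resulting $H$ is a true Cartesian product of \treecauser factors rather than some more entangled combination.
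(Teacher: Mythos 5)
Your case analysis of the action of $\tau$ (translation along an axis versus a fixed subtree with permuted cones) lines up reasonably well with the paper's classification of the possible orbit structures, and the identification of the four target group types is essentially right. But there is a genuine gap in how you form $H$. In the fixed-point-free case you take $H$ to be the subgroup that stabilises $\{A,\sigma(A)\}$ setwise \emph{and fixes the remainder of $T$ pointwise}; that group does not contain $G$, because $\sigma$ and $\tau$ move points outside $A\cup\sigma(A)$ --- every cone hanging off the axis, and every other $G$-orbit in $T$, is carried along by the action. The alternative reading you offer at the end (take $H$ to be the full stabiliser of the orbit decomposition) does contain $G$, but then it is not clear that $H$ is a Cartesian product of the four listed types rather than something larger and more entangled; you flag this as a ``secondary concern,'' but it is in fact the central difficulty.

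The paper's proof spends most of its length on exactly this point. It works orbit by orbit: for \emph{each} $t\in T$ it classifies the group induced by $G$ on $G(t)$ (trivial, fh-regular, $\mathbb{Z}\wr\mathbb{Z}_2$, $\mathrm{Sym}(\omega)$, or $\mathrm{Sym}(\omega)\wr\mathbb{Z}_2$, according to whether the restriction is still $\dinfinity$ and how $t$ compares with $\tau(t)$), and then --- this is the step your proposal omits entirely --- shows that each automorphism $\lambda$ of the orbit extends over the whole block $B(s_t;G(t))$ of cones attached to that orbit without changing the isomorphism type of the group (with the delicate sub-case being the fh-regular one, where one must pass to larger and larger orbits inside the block and build a regular tree $F$ as a union). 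Only then is $H$ defined as $\prod_{B\in\mathcal{B}}\aut(B)$ over all these blocks, which both contains $G$ (since $G$ preserves each block setwise) and is dendromorphic (as a Cartesian product of dendromorphic factors). Without the per-orbit extension argument and the product over all orbits, your $H$ either fails to contain $G$ or is not shown to be of the required form, so the proposal as written does not close.
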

\begin{proof}

Let $T$ be a tree such that there is $G \leq \aut(T)$ and $G \cong \dinfinity$.  We use the same presentation of $\dinfinity$ that we gave in Definition \ref{dfn:dinfinity}, so here $\sigma$ and $\tau$ are automorphisms of $T$ that generate $G$ and satisfy the identities $\sigma^2 = 1$ and $\sigma \tau \sigma =  \tau^{-1}$.

Let $t \in T$.  How does $\sigma$ constrain the structure of $G(t)$?  If $t< \sigma (t)$ then $\sigma(t) < \sigma ^2 (t) =t$, which is a contradiction.  Similarly $\sigma (t) < t$ also leads to a contradiction, so if $t \not= \sigma(t)$ then $t \parallel \sigma(t)$.  Since $\sigma \tau \not= \tau \sigma$, we know that $ \supp (\sigma) \cap \supp (\tau)  \not= \emptyset$.

First suppose that $t \in T$ is such that $\lbrace \phi|_{G(t)} \: : \: \phi \in G \rbrace \not\cong \dinfinity$.  This means that there is some $n \in \mathbb{Z}$ and $i \in \lbrace 0,1 \rbrace$ such that $\tau|_{G(t)}^n \sigma|_{G(t)}^i  = id|_{G(t)}$.
\begin{enumerate}
\item If $\sigma|_{G(t)} = id|_{G(t)}$ then the identity $\sigma \tau = \tau^{-1} \sigma$ becomes $\tau = \tau^{-1}$ and we learn that $G(t) = \lbrace t \rbrace$ and $\aut(G(t))$ is trivial.

\item If $\tau|_{G(t)}^n = id|_{G(t)}$ then $G(t)$ is a finite antichain and so $G(t)^+$ is a finite tree whose automorphism group acts transitively on its maximal elements, and by Remark \ref{lemma:fh-regular} is fh-regular, so $\aut(G(t))$ is the automorphism group of the fh-regular tree $G(t)^+$.

\item If $\sigma \tau^n|_{G(t)} = id|_{G(t)}$ then we can deduce that $\sigma|_{G(t)} = \tau^n|_{G(t)}$, and thus $ \tau^{2n}|_{G(t)} = id|_{G(t)}$.
\end{enumerate}

Now we suppose $t \in T$ is such that $\lbrace \phi|_{G(t)} \: : \: \phi \in G \rbrace \cong \dinfinity$.

We now examine the possible action of $\tau$ on $t$.  Since $\tau$ has infinite order, $\lbrace \tau^n (t) \: : \: n \in \mathbb{Z} \rbrace$ and $\lbrace \tau^n \sigma(t) \: : \: n \in \mathbb{Z} \rbrace$ are infinite.  We now consider various cases to deduce the structure of $G(t)$.
\begin{description}

\item[Case 1:]$t < \tau (t)$ or $t > \tau (t)$

Without loss of generality we assume that $t < \tau (t)$.

Since $t< \tau(t)$ we know that $\tau^m(t) < \tau^n(t)$ if and only if $m<n$, where $m,n \in \mathbb{Z}$.  Suppose $\sigma$ fixes one of these $\tau^m(t)$.  Hence
$$\begin{array}{r c l}
\sigma \tau^m (t) & = & \tau^m(t) \\
\end{array}$$
but in $\dinfinity$ we know that $\tau^{-m} \sigma = \sigma \tau^{m}$, so 
$$\begin{array}{r c l}
\tau ^{-m} \sigma(t) & = & \tau^m (t) \\
\sigma(t) & = & \tau^{2m} (t) \\
\end{array} $$
which means that $\sigma$ maps $t$ to $\tau^{2m}(t)$, which in this case is assumed to be greater than $t$, which we have already shown yields a contradiction, and thus $\sigma$ does not fix any $\tau^n(t)$.

We suppose that there is an $n \in \mathbb{Z}$ such that $\tau ^n (t) \leq t \wedge \sigma(t)$.  We know that $\sigma \tau^n (t) \parallel \tau^n(t)$, which is the situation depicted in Figure 14.

\begin{figure}[h]
\begin{center}
\begin{tikzpicture}[scale=0.16]
\draw (0,30)  -- (10,20) -- (20,30);
\draw (10,20) -- (10,10) -- (20,0) -- (30,10);

\fill (0,30) circle (0.3);
\fill (10,20) circle (0.3);
\fill (20,30) circle (0.3);
\fill (10,10) circle (0.3);
\fill (20,0) circle (0.3);
\fill (30,10) circle (0.3);

\draw[anchor=west] (1,30) node {$t$};
\draw[anchor=west] (21,30) node {$\sigma(t)$};
\draw[anchor=west] (11,20) node {$t \wedge \sigma(t)$};
\draw[anchor=west] (11,10) node {$\tau^n(t)$};
\draw[anchor=west] (31,10) node {$\sigma \tau^n(t)$};
\draw[anchor=west] (21,0) node {$\tau^n(t) \wedge \sigma \tau^n(t)$};
\end{tikzpicture}
\end{center}
\caption{Deduced Structure of $G(t)$ if $(\tau ^n (t) \leq t \wedge \sigma(t))$}
\end{figure}
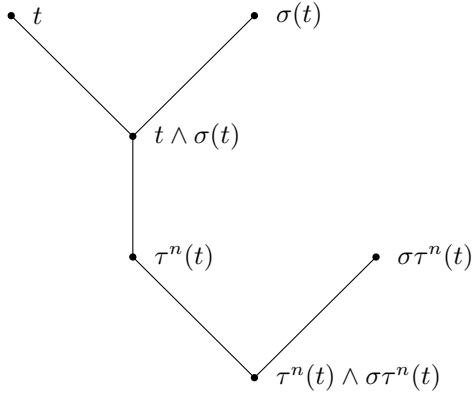

However $\sigma$ maps the pair $(t, \tau^n(t))$ to $(\sigma(t), \sigma \tau^n (t))$, so $\tau^n(t) < t$ implies that $\sigma\tau^n(t) < \sigma(t)$, providing a contradiction.

So there is no $n$ such that $\tau^n (t) \leq t \wedge \sigma(t)$ and then we are in the situation depicted in Figure 15.

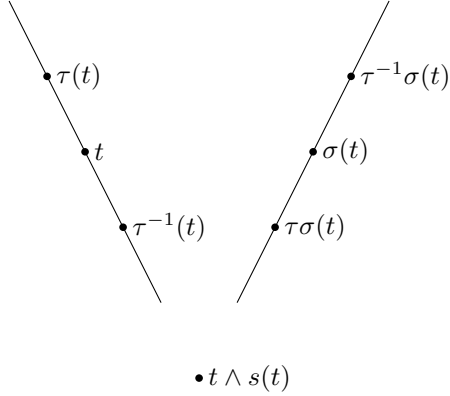
\begin{figure}[h]
\begin{center}
\begin{tikzpicture}[scale=0.1]
\draw (0,50)  -- (20,10);
\draw (30,10) -- (50,50);

\fill (25,0) circle (0.5);

\fill (5,40) circle (0.5);
\fill (10,30) circle (0.5);
\fill (15,20) circle (0.5);

\fill (35,20) circle (0.5);
\fill (40,30) circle (0.5);
\fill (45,40) circle (0.5);

\draw[anchor=west] (25,0) node {$t \wedge s(t)$};

\draw[anchor=west]  (5,40) node {$\tau(t)$};
\draw[anchor=west]  (10,30) node {$t$};
\draw[anchor=west] (15,20) node {$\tau^{-1}(t)$};

\draw[anchor=west]  (45,40) node {$\tau^{-1} \sigma (t)$};
\draw[anchor=west]  (40,30) node {$\sigma (t)$};
\draw[anchor=west]  (35,20) node {$\tau \sigma (t)$};
\end{tikzpicture}
\end{center}
\caption{Deduced Structure of $G(t)$ if $(t \wedge \sigma(t) \leq \tau^{i}(t))$}
\end{figure}

The automorphism group of this structure is clearly $\mathbb{Z} \wr \mathbb{Z}_2$, and so
$$\aut(G(t)) \cong \mathbb{Z} \wr \mathbb{Z}_2$$

\item[Case 2:] $t \parallel \tau(t)$ and $\tau^m(t) \wedge \tau^n(t) = \tau^{m'}(t) \wedge \tau^{n'}(t)$ for all $m \not= n,m' \not= n'$.  We call denote common ramification point,  $\tau^m(t) \wedge \tau^n(t)$ for $m \not= n$, by $x$.  In other words, the $\tau^n(t)$ form an antichain, which ramifies from $x$.

If $x = \sigma(x)$ then the whole orbit of $t$ is an infinite (as $G(t)$ is infinite) antichain  above $x$, and thus $\aut(T)$ is $\sym$.

If $x \not= \sigma(x)$ then the whole orbit of $t$ is two infinite (as both $\lbrace \tau^n (t) \: : \: n \in \mathbb{Z} \rbrace$ and $\lbrace \tau^n \sigma (t) \: : \: n \in \mathbb{Z} \rbrace$ are infinite) antichains , one ramifying from $x$, the other from $\sigma(x)$.  In this case $\aut(T) \cong \sym \wr \mathbb{Z}_2$.

\item[Case 3:] $t \parallel \tau(t)$ and $\tau^m(t) \wedge \tau^n(t) \not= \tau^{m'}(t) \wedge \tau^{n'}(t)$ for some $m,n,m',n'$.

For $m \in \mathbb{N} \setminus \lbrace 0 \rbrace$ let $G_m :=  \lbrace \sigma^{i} \tau^{mn} \: : \: i \in \lbrace 0 , 1 \rbrace \: n \in \mathbb{Z}\rbrace$.  Note that $G_m \cong \dinfinity$.

For brevity's sake, $x_n$ will denote $\tau^{mn}(t) \wedge \tau^{m(n+1)}(t)$.  Suppose that $x_i \not= x_{i+1}$ for all $i$.  Note that $\tau^{mk} (x_n) = x_{n+k}$ because greatest lower bounds are preserved by automorphisms.  For any $i \in \mathbb{Z}$ both $x_i$ and $x_{i+1}$ are below $\tau^{m(i+1)}(t)$, so $\lbrace x_i \: :\: i \in \mathbb{Z} \rbrace$ is linearly ordered and acted on by $\tau^m$, showing that $\tau^m(x_i) < x_i$ or $\tau^m(x_i) > x_i$.

If $\lbrace \phi|_{G_m(x_0)} \: : \: \phi \in G_m \rbrace \not\cong \dinfinity$, then $G_m(x_0)$ is an antichain, but we have just established that $\tau(x_i) < x_i$ or $\tau(x_i) > x_i$, so $\lbrace \phi|_{G_m(x_0)} \: : \: \phi \in G_m \rbrace \cong \dinfinity$, and we may now apply Case 1 to $G_m(x_0)$ and find that $\aut(G_m(x_0)) \cong (\mathbb{Z} \wr \mathbb{Z}_2)$.

Since each $x_i \not= x_{i+1}$, we can deduce the structure depicted in Figure 16.
\begin{figure}[h]\label{figure:cheapcopout}
\begin{center}
\begin{tikzpicture}[scale=0.1]
\draw (0,50)  -- (20,10);
\draw (30,10) -- (50,50);

\fill (25,0) circle (0.5);

\fill (5,40) circle (0.5);
\fill (10,30) circle (0.5);
\fill (15,20) circle (0.5);

\fill (10,50) circle (0.5);
\draw (5,40) -- (10,50);
\fill (15,40) circle (0.5);
\draw (10,30) -- (15,40);
\fill (20,30) circle (0.5);
\draw (15,20) -- (20,30);

\fill (35,20) circle (0.5);
\fill (40,30) circle (0.5);
\fill (45,40) circle (0.5);

\fill (30,30) circle (0.5);
\draw (30,30) -- (35,20);
\fill (35,40) circle (0.5);
\draw (35,40) -- (40,30);
\fill (40,50) circle (0.5);
\draw (40,50) -- (45,40);

\draw[anchor=west] (25,0) node {$x_0 \wedge \sigma (x_0)$};

\draw[anchor=west]  (5,40) node {$x_1$};
\draw[anchor=west]  (10,30) node {$x_0$};
\draw[anchor=west] (15,20) node {$x_{-1}$};

\draw[anchor=west]  (45,40) node {$\sigma (x_1)$};
\draw[anchor=west]  (40,30) node {$\sigma (x_0)$};
\draw[anchor=west]  (35,20) node {$\sigma (x_{-1})$};

\draw[anchor=south]  (10,50) node {$\tau^m(t)$};
\draw[anchor=south]  (15,40) node {$t$};

\draw[anchor=south]  (35,40) node {$\sigma (t)$};
\draw[anchor=south]  (40,50) node {$\tau^m \sigma (t)$};

\end{tikzpicture}
\end{center}
\caption{Deduced Structure needed for Case 3}
\end{figure}
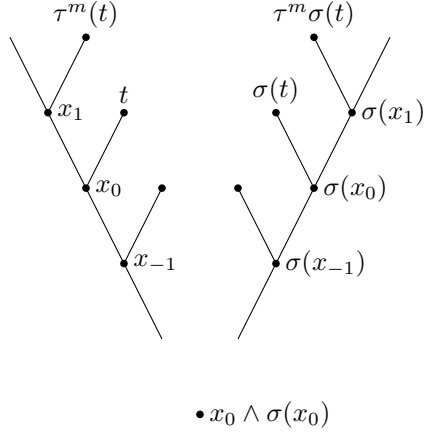

Thus we see that $\aut (G_m(t)) \cong (\mathbb{Z} \wr \mathbb{Z}_2)$.  If we redefine $x_n := \tau^{mn+k}(t) \wedge \tau^{m(n+1) +k}(t)$ and repeat this argument, we see that $\aut (G_m(\tau^k(t))) \cong (\mathbb{Z} \wr \mathbb{Z}_2)$

Let $m_0$ be the least element of the set 
$$\lbrace i = \mathrm{lcm}(n-m,n'-m') \: : \: \tau^m(t) \wedge \tau^n(t) \not= \tau^{m'}(t) \wedge \tau^{n'}(t) \rbrace$$
Note that $\tau^{m_0 n}(t) \wedge \tau^{m_0 (n+1)}(t) \not= \tau^{m_0 (n+1)}(t) \wedge \tau^{m_0 (n+2)}(t)$ for all $n$, so $m_0$ is in fact the least number such that $\aut (G_{m_0}(t)) \cong (\mathbb{Z} \wr \mathbb{Z}_2)$.

$G(t)$ consists of $m_0-1$ copies of $G_{m_0}(t)$, which are preserved by $\sigma$, and $\tau$ acts cyclically on them, and indeed their least elements, which we call $L$.  This gives us $\lbrace \phi|_{L} \: : \: \phi \in G \rbrace \not\cong \dinfinity$, and $\sigma|_L = \mathrm{id}|_L$, so $L$ is trivial and $\aut(G(t)) \cong (\mathbb{Z} \wr \mathbb{Z}_2)$
\end{description}

Therefore for all $t \in T$ the group $\aut(G(t))$ is either trivial or :
\begin{enumerate}
\item $\mathbb{Z} \wr \mathbb{Z}_2$ (from Cases 1 and 3);
\item $\sym$ (from Case 2);
\item $\sym \wr \mathbb{Z}_2$ (from Case 2); or
\item the automorphism group of an fh-regular tree;
\end{enumerate}
each of which is a \treecauser .

We pick one $t \in T$ such that $G(t) \not= \lbrace t \rbrace$, and let $s := \mathrm{inf} (G(t)^+)$.  The next phase of this proof is to show that the additional automorphisms of $\aut(G(t))$ extend to $B(s;G(t))$.  We do this by addressing each of the possibilities in the above list individually.

Let $\lambda \in \aut(G(t)) \setminus G$.  We wish to extend $\lambda$ to $B(s;G(t))$ and show that the group of the extensions of elements of $\aut(G(t))$ is a \treecauser .

\begin{enumerate}
 \item Suppose $\aut(G(t)) \cong ( \mathbb{Z} \wr  \mathbb{Z}_2)$.  Then $\lambda$ is characterised by where it maps $t$ and $\sigma(t)$.  Let's suppose that $\lambda(t) = \tau^n(t)$ and $\lambda(\sigma(t)) = \tau^m \sigma(t)$.  Then we define $\bar{\lambda}$ to be the following:
$$
\bar{\lambda} :x \mapsto \left\lbrace
\begin{array}{l c}
\tau^n(x) & x \in B(s;t) \\
\tau^m(x) & x \in B(s;\sigma(t))
\end{array}
\right.
$$
If $\lambda(t) = \tau^n \sigma(t)$ and $\lambda(\sigma(t)) = \tau^m (t)$ then
$$
\bar{\lambda} :x \mapsto \left\lbrace
\begin{array}{l c}
\tau^m \sigma(x) & x \in B(s;t) \\
\tau^n \sigma(x) & x \in B(s;\sigma(t))
\end{array}
\right.
$$
Thus we may extend $\lambda$ to a unique element of $\aut((B(s;G(t)))$, so
$$\aut((B(s;G(t))) \cong ( \mathbb{Z} \wr  \mathbb{Z}_2)$$

\item Suppose $\aut(G(t)) \cong \sym$.  If there is some $b \in G(t)$ such that $\sigma(b) = b$ and $\sigma|_{B(s;b)} \not= \mathrm{id}|_{B(s;b)}$ then there are two possible extensions of $\lambda$.  If $x \in B(s,a)$ and $\tau^n(a) = \lambda(a) = \tau^m \sigma (a)$ then
$$
\begin{array}{l}
\bar{\lambda}_0 :x \mapsto \tau^n (x) \\
\bar{\lambda}_1 :x \mapsto \tau^m \sigma (x)
\end{array}
$$
Since each $\lambda$ may be extended to two elements of $\aut((B(s;G(t)))$, we know that
$$\aut((B(s;G(t))) \cong ( \mathbb{Z}_2 \times  \sym)$$
Otherwise if $x \in B(s; a)$ and $\lambda(a) = \tau^n \sigma^i (t)$ then
 $$
\bar{\lambda} :x \mapsto \tau^n \sigma^i(x) \\
$$
and we uniquely extend $\lambda$, showing
$$\aut((B(s;G(t))) \cong \sym$$

 \item Suppose $\aut(G(t)) \cong (\sym \wr \mathbb{Z}_2)$.  If $x \in B(s; a)$ and $\lambda(a) = \tau^n \sigma^i (t)$ then
 $$
\bar{\lambda} :x \mapsto \tau^n \sigma^i(x) \\
$$
so we can uniquely extend $\lambda$, showing
$$\aut((B(s;G(t))) \cong (\sym \wr \mathbb{Z}_2)$$

 \item Suppose $G(t)^+$ is an fh-regular tree, and suppose that there is an $x \in B(s;G(t))$ such that $\lbrace \phi|_{G(x)} \: : \: \phi \in G \rbrace \cong \dinfinity$.  Clearly $G$ preserves $B(s;G(t))$, so
 $$G(x) \subseteq  B(s;G(t))$$
 
 Suppose that $x \in B(s;t)$.  Then $\tau^n \sigma^i (x) \in B(s;\tau^n \sigma^i (t))$ for all $n \in \mathbb{Z}$ and $i \in \lbrace 0,1 \rbrace$, therefore for all $y \in G(t)$
 $$G(x) \cap B(s;y) \not= \emptyset$$
  
 Rather than look at $\lambda \in \aut(G(t))$, we instead extend every $\mu \in \aut(G(x))$ to obtain a dendromorphic supergroup of $G$ in $B(s,G(t))$.
 
 Now we suppose that there is no $x \in B(s;G(t))$ such that $\lbrace \phi|_{G(x)} \: : \: \phi \in G \rbrace \cong \dinfinity$.  We will define by induction a family of sets that we will call $X_k$ which will help us extend $\lambda$.
 
 Let $X_0$ be the maximal subset of $B(s,G(t))$ such that for all $\phi, \psi \in G$
 $$\phi|_{G(t)} = \psi|_{G(t)} \Rightarrow \phi|_{X_0} = \psi|_{X_0}  $$
 Let $x \in B(s;y)$ and let $\phi \in G$ be such that $\lambda(y)=\phi(y)$.
   $$
\bar{\lambda} :x \mapsto \phi(x) \\
$$
Since all the possible $\phi$ agree, this map is a well-defined, unique extension of $\lambda$, so $\aut(X_0) \cong \aut(G(t)^+)$.  If $X_0 = B(s;G(t))$ then we have extended $\lambda$ to $B(s;G(t))$ and we are done.

Suppose that we have defined $X_{k-1}$, but $X_{k-1} \not= B(s;G(t))$.  Let $x_k \in B(s,G(t)) \setminus X_{k-1}$.  Let $X_k$ be the maximal subset of $B(s,G(x_1))$ such that for all $\phi, \psi \in G$
 $$\phi|_{G(t)} = \psi|_{G(t)} \Rightarrow \phi|_{X_1} = \psi|_{X_1} $$
 Again, $\aut(X_k) \cong \aut(G(x_k)^+)$ and if $X_k = B(s;G(t))$ then we have extended $\lambda \in \aut(X_k)$ to $B(s;G(t))$ and we are done.
 
 If $X_k \not= B(s;G(t))$ then we define $X:= \bigcup\limits_{k \in \mathbb{N}} X_k$.   We know how to extend $\lambda$ to $X$, so if we can show that:
 \begin{enumerate}
 \item $X = B(s;G(t))$; and
 \item there is a regular tree $F$ such that $\aut(X) = \aut(F)$;
 \end{enumerate}
 then we will have shown that $\aut(B(s,G(t)) \cong \aut(F)$.
 
 \begin{enumerate}
 \item For all $k$, the orbit $|G(x_{k})| > |G(x_{k-1})|$, as there are $\phi, \psi \in G$ such that $\phi(x_{k-1}) = \psi(x_{k-1})$ but $\phi(x_{k}) \not= \psi(x_{k})$, so the set $\lbrace |G(x_{k})| \: : \: k \in \mathbb{N} \rbrace$ is unbounded.
 
  If $y \in B(s;G(t)) \setminus X$ then for all $k$
  $$\tau^{|G(x_{k})|}(y) \not= y$$
  so $G$ acts as $\dinfinity$ on $G(y)$, and we have already seen how to extend $\lambda$ to $B(s;G(t))$ in this case, so we may assume now that $X = B(s;G(t))$.
  \item Since $X_k$ extends $X_{k-1}$ and since $s$ is the root of both $G(x_{k-1})^+$ and $G(x_{k})^+$, we know that $G(x_k)^+$ is an extension of $G(x_{k-1})^+$.  Therefore we consider the tree $F:=\bigcup\limits_{k \in \mathbb{N}} G(x_k)^+$.
  
  Let $(s, y_1 \ldots )$ and $(s, z_1, \ldots )$ denote maximal chains of $F$.  Since each $G(x_k)^+$ is an fh-regular tree, given any two maximal chains of $F$ there is a partial automorphism from the initial $k$ elements of the first to the initial $k$ elements of the second.  The union of all these partial automorphisms will be an automorphism of $F$, and thus $\aut(F)$ acts transitively on every maximal chain, which is Condition 1 of Definition \ref{dfn:regular}.
  
  The initial section of every maximal chain of $F$ finite, so every maximal chain is isomorphic to $\mathbb{N}$, Condition 2 of Definition \ref{dfn:regular}.
  
  If $y \in F$ then $y \in G(x_k)^+$ for some $k$, so the ramification order of any non-maximal element of $F$ is at least 2 but finite, showing that $F$ satisfies Condition 3 of Definition \ref{dfn:regular}.
  
  Finally, if $|F^{\leq y}|= |F^{\leq z}|$ then there is a $k$ such that $y, z \in G(x_k)^+$ and $|(G(x_k)^+)^{\leq y}|= |(G(x_k)^+)^{\leq z}|$, so the fact that $G(x_k)^+$ is fh-regular implies that $F$ satisfies Condition 3 of Definition \ref{dfn:regular}, and is regular.
 \end{enumerate}
 
 Therefore there is a regular tree $F$ such that $\aut(B(s,G(t)) \cong \aut(F)$.
\end{enumerate}

For any $t \in T$ let $s_t$ be the root of $G(t)^+$.  Consider the set
$$ \mathcal{B}:= \lbrace B(s_t;G(t)) \: : \: |G(t)| \not= 1 \rbrace \cup \lbrace \lbrace t \rbrace \: : \: |G(t)| \not= 1 \rbrace $$
Let $H$ be the group of all automorphisms of $T$ that fix every $B \in \mathcal{B}$ setwise.
$$ H = \prod_{B \in \mathcal{B}} \aut(B) $$
Since the Cartesian product of dendromorphic groups is dendromorphic, $H$ is also dendromorphic.  We have already seen that $G$ fixes every $B \in \mathcal{B}$ setwise, so $G \leq H$.
\end{proof}

If you are familiar with automorphism groups as topological groups, you may have realised that in the proof of Theorem \ref{thm:SuperDinf} we are essentially calculating the closure of the copy of $\dinfinity$.  In Theorem \ref{thm:dinfinityinCFPOs} we will see that a CFPO is not treelike if and only if its automorphism group contains a closed copy of $\dinfinity$.

While describing this situation using the language of topological groups might have been more elegant, I prefer this approach as it makes it clear that these properties are recognisable from the abstract group.

\subsection{$\dinfinity$ in CFPOs}

\begin{cor} \label{prop:ALTnotTree}
$\aut(\alt{}) \not\cong \aut(T)$ for all trees $T$.
\end{cor}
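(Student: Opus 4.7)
The strategy is to realise $\aut(\alt{})$ concretely as $\dinfinity$ and then use Theorem \ref{thm:SuperDinf} to force a contradiction. The first step is to verify directly that $\aut(\alt{}) \cong \dinfinity$: being a local maximum or local minimum is order-theoretic, so every automorphism preserves the peaks $\lbrace a_{2i} \rbrace$ and valleys $\lbrace a_{2i+1} \rbrace$ setwise, and is determined by its image of the pair $(a_0, a_1)$. The shift $a_i \mapsto a_{i+2}$ and the reflection $a_i \mapsto a_{-i}$ exhaust $\aut(\alt{})$ and satisfy the defining relations of $\dinfinity$.

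Now suppose for contradiction that $\aut(T) \cong \aut(\alt{}) \cong \dinfinity$ for some tree $T$. Viewing $G := \aut(T)$ as a subgroup of itself, $G \cong \dinfinity$, so Theorem \ref{thm:SuperDinf} produces a dendromorphic group $H$ with $G \leq H \leq \aut(T)$. But $\aut(T) = G$, so $H = G$, whence $\dinfinity$ itself would have to be a dendromorphic group.

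The remaining, and only nontrivial, point is that $\dinfinity$ is \emph{not} dendromorphic. Since $Z(\dinfinity) = 1$, the group $\dinfinity$ is directly indecomposable, so it would have to coincide with a single factor from the allowed list. However, $\sym$ and $\sym \wr \mathbb{Z}_2$ are uncountable; $\mathbb{Z} \wr \mathbb{Z}_2$ contains the rank-$2$ abelian subgroup $\mathbb{Z}^2$, whereas every abelian subgroup of $\dinfinity$ sits inside $\langle \tau \rangle \cong \mathbb{Z}$ or is generated by a single involution (two distinct reflections $\sigma\tau^n, \sigma\tau^m$ never commute), hence has rank at most $1$; and for any regular tree $T'$, $\aut(T')$ is either finite (the fh-regular case, where $T'$ itself is finite) or of cardinality $2^{\aleph_0}$ (the $\mathbb{N}$-chain case, where independent swaps of the children at each level give continuum many automorphisms), neither matching the countably infinite $\dinfinity$.

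The single substantive obstacle is this last case check for regular trees; once it is in hand, the corollary follows immediately from Theorem \ref{thm:SuperDinf}.
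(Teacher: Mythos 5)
Your overall route is the paper's own: identify $\aut(\alt{})$ with $\dinfinity$, apply Theorem \ref{thm:SuperDinf} to $G=\aut(T)$ to get a dendromorphic $H$ squeezed between $G$ and $\aut(T)=G$, and conclude that $\dinfinity$ itself would have to be dendromorphic. The paper's proof is a one-line version of exactly this and leaves the final non-membership check implicit; you supply it, which is the right instinct, and your case analysis of the individual factors ($\sym$ and $\sym\wr\mathbb{Z}_2$ by cardinality, $\mathbb{Z}\wr\mathbb{Z}_2$ because it contains $\mathbb{Z}^2$ while every abelian subgroup of $\dinfinity$ is cyclic, regular-tree groups by cardinality) is correct.

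The one genuine flaw is the inference ``$Z(\dinfinity)=1$, hence $\dinfinity$ is directly indecomposable.'' Triviality of the centre does not imply direct indecomposability ($S_3\times S_3$ is centerless and decomposable), so as written this step is unjustified. The conclusion you need is nevertheless true and easy to repair: every nontrivial normal subgroup of $\dinfinity$ contains a nontrivial power of $\tau$ (if it contains a reflection $\sigma\tau^k$, normality gives $\sigma\tau^{k+2}$ and hence $\tau^2$), so any two nontrivial normal subgroups meet nontrivially and no direct decomposition exists. Alternatively you can bypass indecomposability altogether: a Cartesian product of groups from the allowed list that is countably infinite can involve only finitely many nontrivial factors, all finite except possibly copies of $\mathbb{Z}\wr\mathbb{Z}_2$; since $\dinfinity$ is infinite such a product must contain $\mathbb{Z}\wr\mathbb{Z}_2$, hence $\mathbb{Z}^2$, which your own rank observation rules out. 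With either patch the argument is complete and matches the paper's intent.
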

\begin{proof}
$\aut(\alt{}) \cong \dinfinity$, so if $\aut(T) \cong \aut(\alt{})$ then the whole automorphism group is a copy of $\dinfinity$, and so cannot be contained in a \treecauser .
\end{proof}

So we've established that $\dinfinity$ can occur as a subgroup of the automorphism group of a CFPO in a different way than it can as a subgroup of the automorphism group of a tree.  The rest of this subsection is devoted to finding out how copies of $\dinfinity$ that aren't contained in a \treecauser can act on a CFPO.

\begin{dfn}\label{dfn:connectionclosure}
Let $M$ be a CFPO.  If $X \subseteq M$ then $X^{cc}$, the \textbf{connection closure} of $X$, is the following set
$$\bigcup_{x,y \in X} \path{x,y}$$
In particular, if $G \leq \aut(M)$ and $x \in M$ then this combines with the notation of Definition \ref{dfn:G(t)} to give:
$$G(x)^{cc}:= \bigcup\limits_{g,h \in G} \path{g(x),h(x)}$$
\end{dfn}

\begin{theorem}\label{thm:dinfinityinCFPOs}
Let $M$ be a Rubin complete CFPO and let $G \leq \aut(M)$.  If $G \cong \dinfinity$ then either $G$ is contained in a \treecauser or $G$ acts on a copy of Alt in $M$, but not both.
\end{theorem}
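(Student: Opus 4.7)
The plan parallels Theorem \ref{thm:SuperDinf} with a dichotomy added at each stage: in a CFPO, the path structures that were automatically tree-like in a tree can fail to be so, yielding a $G$-invariant copy of $\alt{}$ instead of a dendromorphic factor.

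Fix generators $\sigma, \tau$ of $G$ with $\sigma^2 = 1$ and $\sigma\tau\sigma = \tau^{-1}$, and let $t \in M$. Following the case analysis of Theorem \ref{thm:SuperDinf}, split on the action of $\tau$ on $t$: (i) $t \not\parallel \tau(t)$, so the $\tau$-orbit is a chain with two rays joined through $\path{t,\sigma(t)}$; (ii) $t \parallel \tau(t)$ with the common bound $c$ in $\path{t,\tau(t)}$ being $\tau$-fixed; (iii) $t \parallel \tau(t)$ with $c$ drifting under $\tau$. Each of (i)--(iii) has a tree-like subcase recovered directly from Theorem \ref{thm:SuperDinf} (producing a $\mathbb{Z}\wr\mathbb{Z}_2$, $\sym$, $\sym\wr\mathbb{Z}_2$, or regular-tree-automorphism factor); the CFPO-specific subcase is that the joining-path structure itself carries alternation, in which event one extracts a $G$-invariant copy of $\alt{}$. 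The paradigmatic example is case (iii): the drift points $x_n := \tau^n(t) \wedge \tau^{n+1}(t)$ (meets taken within the appropriate cone) are pairwise comparable in a tree but can be pairwise incomparable in a CFPO, and when they are,
$$\ldots,\, x_{-1},\, t,\, x_0,\, \tau(t),\, x_1,\, \tau^2(t),\, x_2,\, \ldots$$
is a $G$-invariant copy of $\alt{}$. Under the hypothesis that no such copy exists, every CFPO-specific subcase is ruled out, the argument of Theorem \ref{thm:SuperDinf} transfers, and the product of the per-orbit stabilizers is a dendromorphic supergroup of $G$ inside $\aut(M)$.

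For mutual exclusivity, suppose $G \leq H \leq \aut(M)$ with $H$ dendromorphic and $G$ preserves a copy $A$ of $\alt{}$ setwise. Since $\aut(A) \cong \dinfinity \cong G$, the restriction $G|_A$ is onto $\aut(A)$ and hence transitive, so $A \subseteq H(a)$ for any $a \in A$. But each orbit of a factor of a dendromorphic group --- a $\mathbb{Z}$-chain, an antichain, two antichains ramifying from a common lower bound, or an antichain of leaves of a regular tree, per the proof of Theorem \ref{thm:SuperDinf} --- cannot contain $A$ as a sub-order, giving the contradiction.

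The main obstacle is identifying the CFPO-specific branches in the case analysis of Theorem \ref{thm:SuperDinf} and checking that the hypothesis ``$G$ does not act on any copy of $\alt{}$'' is exactly what is needed to rule each one out; once done, the tree proof and the assembly into a dendromorphic product transfer routinely.
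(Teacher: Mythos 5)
Your high-level dichotomy matches the paper's, but two steps have genuine gaps. First, the exclusivity argument. You claim that since $\aut(A)\cong\dinfinity\cong G$ the restriction $G|_A$ is onto $\aut(A)$ and hence transitive, so $A\subseteq H(a)$; but $\aut(\alt{})$ is \emph{not} transitive on $\alt{}$ --- every automorphism preserves the set of upper points and the set of lower points separately --- so $A$ splits across at least two orbits. Worse, the set $A^U$ of upper points is an infinite antichain, and an infinite antichain \emph{is} exactly what an orbit of a $\sym$ factor of a dendromorphic group looks like, so ``an orbit of a dendromorphic factor cannot contain $A$ as a sub-order'' does not yield a contradiction. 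The obstruction the paper actually uses is metric, not order-theoretic: any overgroup inducing $\sym$ (or $\prod S_n$) on $A^U$ contains an element sending $(a_{-2},a_0,a_2)$ to $(a_{-2},a_2,a_0)$, which must change the length of $\path{a_{-2},a_2}$ and so cannot be an automorphism of a CFPO; the $\mathbb{Z}\wr\mathbb{Z}_2$ possibility is excluded by a separate support argument. You need these case-by-case eliminations; the containment argument as stated fails.

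Second, the positive direction. Re-running the case analysis of Theorem \ref{thm:SuperDinf} orbit by orbit runs into CFPO-specific problems you do not resolve: the meets $\tau^n(t)\wedge\tau^{n+1}(t)$ need not exist in a CFPO; your proposed copy of Alt, $\lbrace\ldots,x_{-1},t,x_0,\tau(t),\ldots\rbrace$, is not visibly $\sigma$-invariant (the paper must pass to the points $d_i$ fixed by $\sigma$ on $\path{\tau^i(b),c_i}$ precisely to obtain a copy of Alt on which all of $G$ acts); and, most importantly, ``the product of the per-orbit stabilizers is a dendromorphic supergroup'' hides the hard part, namely extending automorphisms of one orbit across the rest of $M$. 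In a tree this was the cone-extension step over $B(s;G(t))$; a CFPO has no root, and the paper instead works with the connection closure $G(m)^{cc}$, shows it is $G$-invariant, observes that when it is not a $\cfpo{\infty}$ it is treelike by the results of Section \ref{orderconditions} (after colouring the extended cones so that automorphisms of $G(m)^{cc}$ extend to $M$), and only then invokes Theorem \ref{thm:SuperDinf}; a separate branch handles the case where $G$ fixes a point, via Theorem \ref{thm:fixedpoint}. Without some substitute for this assembly step your argument does not actually produce a dendromorphic supergroup of $G$ inside $\aut(M)$.
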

\begin{proof}
If $M$ is a $\cfpo{n}$ for some $n \in \mathbb{N}$ or a $\cfpo{\omega}$ then by Theorem \ref{thm:cfpo2n+1}, Corollary \ref{cor:cfpo2n} and Theorem \ref{cor:cfpo2n} there is a tree $T$ such that $\aut(M) \cong \aut(T)$.  Thus Theorem \ref{thm:SuperDinf} shows that $G$ is contained in a \treecauser and $G$ cannot act on a copy of Alt, as $M$ does not contain a copy of Alt.  We now suppose that $M$ is a connected $\cfpo{\infty}$.

If $G$ fixes $a \in M$ then $G \leq \aut_{a}(M)$.  By adding a colour predicate to $M$ that only $a$ realises, we find a CFPO with a fixed point whose automorphism group is $\aut_{a}(M)$.  Since this CFPO has a fixed point it is treelike (Theorem \ref{thm:fixedpoint}), and Theorem \ref{thm:SuperDinf} shows that there is a \treecauser $X$ which is contained in $\aut_{a}(M)$ and contains $G$.  Therefore if $M \setminus \supp(G) \not= \emptyset$ then $G$ is contained in a \treecauser{.}

Now suppose that $G$ has no fixed point and that $G(m)^{cc}$ is not a $\cfpo{\infty}$ for any $m \in M$.

We can view the connected components of $M \setminus G(m)^{cc}$ as extended cones of elements of $G(m)^{cc}$.  For all $a \in G(m)^{cc}$
$$C(a) := \lbrace x \in M \setminus G(m)^{cc} \: : \: a \in \path{x, G(m)^{cc}} \rbrace$$
i.e. $C(a)$ is the union of all the extended cones of $M \setminus G(m)^{cc}$ that ramify from $a$.  If $\phi \in \aut(\langle G(m)^{cc}, \leq_M \rangle)$ does not extend to an automorphism of $M$ then $\phi$ must map $a$ to $b$ but $C(a) \not\cong C(b)$.

If for all CFPOs $C$ such that $\exists a \in G(m)^{cc} \: C \cong C(a)$ we introduce a colour predicate $P_C$ to $\langle G(m)^{cc}, \leq_M \rangle$ such that
$$\langle G(m)^{cc}, \leq_M \rangle \models P_C(a) \: \Leftrightarrow \: C(a) \cong C$$
Every automorphism of $\langle G(m)^{cc}, \leq_m, P_C \rangle$ is a restriction of an automorphism of $M$.

Each $G(m)^{cc}$ is $G$-invariant, as otherwise we would be able to map a path inside $G(m)^{cc}$ to one outside by an element of $G$, but this map must take the endpoints of this path with it, and these endpoints are elements of $G(m)^{cc}$.

We choose one $m \in M$.  Since $G(m)^{cc}$ is not a $\cfpo{\infty}$, it is treelike.  All of the extended cones that are contained in $M \setminus G(m)^{cc}$ are treelike if we fix the point in $G(m)^{cc}$ that they emanate from, so by replacing $G(m)^{cc}$ and the extended cones, we may find a tree $T$ such that $G \leq \aut(T) \leq \aut(M)$, and so $G$ is contained in a \treecauser{.}

So now suppose that $a \in M$ is such that $G(a)^{cc}$ is a $\cfpo{\infty}$.  From such an $a$ we define
$$b := \path{a,\path{\tau^{-1}(a),\tau(a)}}$$
(because $\path{\tau^{-1}(b),b} \cap \path{b,\tau(b)} = \lbrace b \rbrace$) and consider $G(b)^-$, the set of maximal and minimal points of $G(b)^{cc}$.  If $\tau^{\mathbb{Z}}(b) = G(b)$ then $G(b)^-$ is a copy of Alt on which $G$ acts.

If $\sigma(b) \not\in \tau^{\mathbb{Z}}(b)$ then we consider $G$'s action on
$$\path{\tau^{\mathbb{Z}}(b),\tau^{\mathbb{Z}}(\sigma(b))}$$
which, if non-empty, will be fixed pointwise by $\tau$, and on which $\sigma$ will have a fixed point, contradicting the assumption that $G$ has no fixed points.
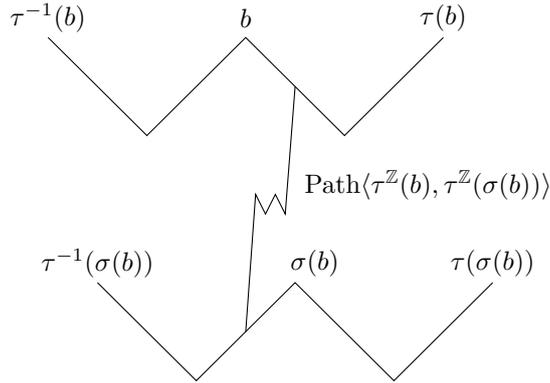
\begin{figure}[h!]\label{FigureNextSoon}
\begin{center}
\begin{tikzpicture}[scale=0.13]
\draw (0,30) -- (10,20) -- (20,30) -- (30,20) -- (40,30);
\draw (5,5) -- (15,-5) -- (25,5) -- (35,-5) -- (45,5);
\draw (25,25) -- (24,12) -- (23,14) -- (22,12) -- (21,14) -- (20,0);

\draw (0,32) node {$\tau^{-1}(b)$};
\draw (20,32) node {$b$};
\draw (40,32) node {$\tau(b)$};

\draw (5,7) node {$\tau^{-1}(\sigma(b))$};
\draw (27,7) node {$\sigma(b)$};
\draw (45,7) node {$\tau(\sigma(b))$};

\draw[anchor=west] (25,15) node {$\path{\tau^{\mathbb{Z}}(b),\tau^{\mathbb{Z}}(\sigma(b))}$};
\end{tikzpicture}
\end{center}
\caption{$\path{\tau^{\mathbb{Z}}(b),\tau^{\mathbb{Z}}(\sigma(b))}$}
\end{figure}

If $\path{\tau^{\mathbb{Z}}(b),\tau^{\mathbb{Z}}(\sigma(b))}$ is empty then we are in the situation depicted in Figure 17.
\begin{figure}[hb]\label{FigureNextSoon2}
\begin{center}
\begin{tikzpicture}[scale=0.14]
\draw (-20,5) -- (-10,-5) -- (0,5) -- (10,-5) -- (20,5) -- (30,-5);
\draw (-8,5) -- (-10,2) -- (-11,3) -- (-12,2) -- (-13,3) -- (-15,0);
\draw (12,5) -- (10,2) -- (9,3) -- (8,2) -- (7,3) -- (5,0);
\draw (32,5) -- (30,2) -- (29,3) -- (28,2) -- (27,3) -- (25,0);

\draw (-20,7) node {$\tau^{-1}(b)$};
\draw (0,7) node {$b$};
\draw (20,7) node {$\tau(b)$};

\draw (-8,7) node {$c_{-1}$};
\draw (12,7) node {$c_0$};
\draw (32,7) node {$c_1$};

\draw (-18,0) node {$d_{-1}$};
\draw (3,0) node {$d_0$};
\draw (23,0) node {$d_1$};
\end{tikzpicture}
\end{center}

\caption{$\path{\tau^{i}(b),c_i}$}

\end{figure}

In Figure 18 $c_0$ is $\sigma(b)$ and $c_{k}:=\tau^k(c_0)$, which forces $\sigma \tau^k (b)$ to be $c_j$ for some $j$ (whose relationship with $k$ will be deduced shortly).  Note that $\sigma$ and $\tau$ satisfy the identity $$\sigma \tau = \tau^{-1} \sigma$$ which implies the following equations:

$$\begin{array}{r c l}
c_i & = &  \sigma \tau^j(b) \\
 & = & \tau^{-j} \sigma (b) \\
 \tau^j (c_i) & = & c_0 \\
 c_{i+j} & = & c_0
\end{array}$$

so $c_i = \sigma(\tau^{-i}(b))$.  Let the $d_i$ be the points fixed by $\sigma$ on $\path{\tau^{i}(b),c_i}$ respectively.  Then $\bigcup \path{d_i,d_j}$ is a copy of Alt which is acted on as desired.

Let $\mathcal{A}$ be the family of copies of Alt in $M$.  We now show that if $\act(A,\aut(M))$ (the action of $\aut(M)$ on $A$) is isomorphic to $\dinfinity$ for some $A \in \mathcal{A}$ then $\act(A,\aut(M))$ cannot be contained in a \treecauser, thus showing the exclusivity of the theorem.

If for some $A \in \mathcal{A}$ the action of $\aut(M)$ is $\dinfinity$ then $\act(A,\aut(M)) \cong \dinfinity$ and there is no \treecauser contained in $\aut_{\lbrace A \rbrace}(M)$ that contains $\act(A,\aut(M))$.  Therefore if $\act(A,\aut(M))$ is contained in a \treecauser $X$, then
$$X \not\leq \aut_{\lbrace A \rbrace}(M)$$
In particular this implies that if $g \in X \setminus \act(A,\aut(M))$ then $g(A) \not= A$.

Let $A^U$ be the set of upper points of $A$, enumerated by $\lbrace \ldots, a_{-2}, a_0, a_2, \ldots \rbrace$.

Since $(A^U,\act(A,\aut(M)))$ is 1-transitive so is $(X(A^U),X)$, and so

$$(X(A^U),X) \cong (X(A^U), X_0)$$

where $X_0$ is one of the factors of $X$ (i.e. $\sym$, $\mathbb{Z} \wr \mathbb{Z}_2$ or $\prod S_n$).  This $X_0$ cannot be $\sym$ as then it would be possible to map the triple $(a_{-2}, a_0, a_2)$ to $(a_{-2}, a_2, a_0)$, but any map that does this has to change the length of $\path{ a_{-2},a_2 }$, and so cannot be an isomorphism.  This same argument prevents $X_0 \cong \prod S_n$.

Let $\sigma$ be the infinite order generator of $\act(A,\aut(M))$ and $\tau$ be the finite order generator.  Suppose $X_0 \cong \mathbb{Z} \wr \mathbb{Z}_2$, generated by $\alpha$, $\beta$ and $\gamma$, where $\alpha$ and $\beta$ have infinite order and $\gamma$ has finite order.  Since $\act(A,\aut(M))$ contains an element of finite order, both $\supp(\alpha)$ and $\supp(\beta)$ must have a non-empty intersection with $A^U$.

Since $\alpha$, $\beta$ and $\gamma$ generate $X$ and either preserve or switch $\supp(\alpha)$ and $\supp(\beta)$, every member of $\act(A,\aut(M))$ either preserves or switches $\supp(\alpha)$ and $\supp(\beta)$.  So both $\supp(\alpha) \cap A^U$ and $\supp(\beta) \cap A^U$ cannot both be singletons, as only the identity will preserve $\supp(\alpha) \cap A^U$ and $\supp(\beta) \cap A^U$ and no member of $\act(A,\act(M))$ will swap them.  Since $\supp(\alpha) \cap A^U$ is not a singleton, the action on it determines the action on the whole of $A^U$, and so $\alpha$ and $\beta$ cannot act independently.  So $X_0$ cannot be isomorphic to $\mathbb{Z} \wr \mathbb{Z}_2$.
\end{proof}

\begin{cor}
Let $M$ be a CFPO.  If there is an $A \subseteq M$ and a $G \leq \aut(M)$ such that:
\begin{enumerate}
\item $A$ is a copy of Alt;
\item $G \cong \dinfinity$; and
\item $G$ acts on $A$.
\end{enumerate}
then $M$ is not treelike.
\end{cor}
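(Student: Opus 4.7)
My plan is to derive a contradiction from the assumption that $M$ is treelike, using Theorems \ref{thm:SuperDinf} and \ref{thm:dinfinityinCFPOs} as the two ingredients. First I would suppose there is a tree $T$ together with an abstract group isomorphism $\phi:\aut(M) \to \aut(T)$. Then $\phi(G) \leq \aut(T)$ is a copy of $\dinfinity$, so Theorem \ref{thm:SuperDinf} supplies a \treecauser $H$ with $\phi(G) \leq H \leq \aut(T)$. Pulling back through $\phi^{-1}$ gives a subgroup of $\aut(M)$ containing $G$ and abstractly isomorphic to $H$.

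The second observation is that being \treecauser is a property of the abstract group alone (it is defined as a Cartesian product of groups drawn from a fixed list), so $\phi^{-1}(H)$ is itself dendromorphic. Hence $G$ is contained in a \treecauser subgroup of $\aut(M)$.

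To apply Theorem \ref{thm:dinfinityinCFPOs}, which is stated for Rubin-complete CFPOs, I would then pass to the Rubin completion $M^R$. Since automorphisms of $M$ extend uniquely to automorphisms of $M^R$, we have $\aut(M) = \aut(M^R)$; the subset $A \subseteq M \subseteq M^R$ is still a copy of $\alt{}$, and $G$ still acts on it inside $M^R$. The ``but not both'' clause of Theorem \ref{thm:dinfinityinCFPOs}, applied to $M^R$ with the same $G$, then forbids $G$ from being contained in any \treecauser subgroup of $\aut(M^R) = \aut(M)$, contradicting the conclusion of the previous paragraph.

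I do not foresee any genuine obstacle here: the only substantive checks are that the dendromorphic property is preserved under abstract isomorphism, which is immediate from the definition, and that the hypotheses survive passage to $M^R$, which is routine since automorphisms lift uniquely and $A$ continues to sit inside $M^R$. The result is therefore essentially a bookkeeping consequence of the two main theorems of this section.
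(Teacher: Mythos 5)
Your argument is exactly the paper's: assume $M$ is treelike, use Theorem \ref{thm:SuperDinf} to place $G$ inside a dendromorphic group, and then invoke the ``but not both'' clause of Theorem \ref{thm:dinfinityinCFPOs} to get a contradiction with $G$ acting on a copy of Alt. The extra bookkeeping you supply (transporting the dendromorphic supergroup through the abstract isomorphism, and passing to the Rubin completion) is left implicit in the paper but is the right way to make its two-line proof precise.
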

\begin{proof}
If $M$ is treelike then Theorem \ref{thm:SuperDinf} shows that $G$ is contained in a dendromorphic group, but Theorem \ref{thm:dinfinityinCFPOs} shows that this is impossible.
\end{proof}

\bibliography{bib}{}
\bibliographystyle{plain}
 
\end{document}